\newcommand{\norm}[1]{\left\Vert#1\right\Vert}
\newcommand{\abs}[1]{\left\vert#1\right\vert}
\newcommand{\ip}[1]{\left\langle#1\right\rangle}
\newcommand{\p}[1]{\left(#1\right)}
\title{Fourier series for singular measures in higher dimensions}
\author{Chad Berner, John E. Herr, Palle E.T. Jorgensen, and Eric S. Weber}
\newtheorem{thmalpha}{Theorem}
\newtheorem{thm}{Theorem}[section]
\newtheorem{prop}[thm]{Proposition}
\newtheorem{lem}[thm]{Lemma}
\theoremstyle{definition}
\newtheorem{defn}[thm]{Definition}
\newtheorem{cor}{Corollary}[section]
\newtheorem{ex}[thm]{Example}
\newtheorem{rem}[thm]{Remark}
\newtheorem*{thm*}{Theorem}
\newtheorem*{lem*}{Lemma}
\begin{document}

\begin{abstract}
For multi-variable finite measure spaces, we present in this paper a new framework for non-orthogonal $L^2$ Fourier expansions. Our results hold for probability measures $\mu$ with finite support in $\mathbb{R}^d$ that satisfy a certain disintegration condition that we refer to as ``slice-singular''. In this general framework, we present explicit $L^{2}(\mu)$-Fourier expansions, with Fourier exponentials having positive Fourier frequencies in each of the d coordinates. Our Fourier representations apply to every  $f \in L^2(\mu)$, are based on an extended Kaczmarz algorithm, and use a new recursive $\mu$ Rokhlin disintegration representation. In detail, our Fourier series expansion for $f$ is in terms of the multivariate Fourier exponentials $\{e_n\}$, but the associated Fourier coefficients for $f$ are now computed from a Kaczmarz system $\{g_n\}$ in $L^{2}(\mu)$ which is dual to the Fourier exponentials.  The $\{g_n\}$ system is shown to be a Parseval frame for $L^{2}(\mu)$. Explicit computations for our new Fourier expansions entail a detailed analysis of subspaces of the Hardy space on the polydisk, dual to $L^{2}(\mu)$, and an associated d-variable Normalized Cauchy Transform. Our results extend earlier work for measures $\mu$ in one and two dimensions, i.e., $d=1 (\mu $ singular), and $d=2 (\mu$ assumed slice-singular). Here our focus is the extension to the cases of measures $\mu$ in dimensions $d >2$. Our results are illustrated with the use of explicit iterated function systems (IFSs), including the IFS generated Menger sponge for $d=3$.
\end{abstract}

\maketitle

\section{Introduction}

We are interested in constructing Fourier expansions for elements in $L^2(\mu)$, where $\mu$ is a singular Borel probability measure on $\mathbb{R}^{d}$.  Substantial work has focused on two classes, the case of (i) orthogonal expansions (analyzed for Cantor measures and related IFS measures), and (ii) the case of frame-like expansions. In both cases (for certain classes of fractal measures), there now exist explicit algorithms for analysis and synthesis for the corresponding Hilbert space $L^2(\mu)$ \cite{Jorgensen1998Dense,Herr2017Fourier}.  By this we mean that there exists a sequence of frequencies $\{\lambda_{n}\}$ such that for every $f \in L^2(\mu)$, 
\begin{equation} \label{Eq:fourier-series}
    f(x) = \sum_{n=0}^{\infty} c_{n} e^{2 \pi i \lambda_{n} \cdot x}.
\end{equation}
The foundational result establishing the existence of orthogonal expansions for certain singular measures appeared in \cite{Jorgensen1998Dense}.  Non-orthogonal expansions using the Kaczmarz algorithm were introduced in \cite{Herr2017Fourier}--notably, the construction therein is valid for any singular measure but only works for $d=1$.  Recently, we proved in \cite{Herr2022Fourier} that non-orthogonal Fourier series expansions of the form in Equation \eqref{Eq:fourier-series} could be obtained when $d=2$ provided that the measure possessed what we called the ``slice singular'' property.  Not every singular measure on $\mathbb{R}^2$ is slice-singular, but we demonstrated that large classes of affine fractal measures are slice-singular.  The slice singular property is defined in terms of the Rokhlin decomposition of measures (see Subsection \ref{ssec:rokhlin} and Definition \ref{D:slicesingular} for details).


The purpose of this paper is twofold.  The primary purpose is to extend the results of \cite{Herr2022Fourier} to dimensions $d \geq 3$.  This requires a careful generalization of the definition of slice singular to higher dimensions, and a similarly careful analysis of the operator version of the Kaczmarz algorithm we used in the $d=2$ case.  Our main result can be stated as follows:
\begin{thmalpha} \label{Th:main1}
    If $\mu$ is an $x_{d}$ slice singular Borel probability measure on $[0,1)^{d}$, then for all $f\in L^{2}(\mu)$, there is a $d$-indexed sequence $\{c_{n_{1},\dots, n_{d}}\}$ such that
    \begin{equation}\label{MRi}f(x_{1},\dots, x_{d})=\sum_{n_{d}=0}^{\infty}\dots \sum_{n_{1}=0}^{\infty}c_{n_{1},\dots, n_{d}}e^{2\pi i (n_{1}x_{1}+\dots +n_{d}x_{d})}\end{equation}
    where the limits are in norm and taken in order starting with the right most series.

    Furthermore, for each $n_{1},\dots ,n_{d}$, the mapping of $f\to c_{n_{1},\dots, n_{d}}$ is a continuous linear functional, and the mapping
    $f\to \{c_{n_{1},\dots, n_{d}}\}$ is an isometry into $\ell^{2}(\mathbb{N}^{d})$.
\end{thmalpha}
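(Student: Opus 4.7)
The plan is to proceed by induction on the dimension $d$. The base case $d=1$ is the Kaczmarz Fourier expansion of \cite{Herr2017Fourier}, which already provides an isometric series representation for every singular Borel probability measure on $[0,1)$. For the inductive step, I would use the Rokhlin disintegration of $\mu$ along the last coordinate,
\[
\mu=\int_{[0,1)}\mu^{x_{d}}\,d\nu(x_{d}),
\]
where $\nu$ is the $x_{d}$-marginal and $\{\mu^{x_{d}}\}$ are the conditional slice measures on $[0,1)^{d-1}$. The $x_{d}$ slice singular hypothesis (Definition \ref{D:slicesingular}) should supply, recursively, that $\nu$ is singular on $[0,1)$ and that $\mu^{x_{d}}$ is itself slice singular in one lower dimension for $\nu$-a.e.\ $x_{d}$.

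Given $f\in L^{2}(\mu)$, regard it as a $\nu$-measurable map $x_{d}\mapsto f(\cdot,x_{d})\in L^{2}(\mu^{x_{d}})$. The inductive hypothesis applied slice-by-slice produces, for $\nu$-a.e.\ $x_{d}$, an iterated Fourier expansion
\[
f(x_{1},\dots,x_{d-1},x_{d})=\sum_{n_{d-1}=0}^{\infty}\cdots\sum_{n_{1}=0}^{\infty}c_{n_{1},\dots,n_{d-1}}(x_{d})\,e^{2\pi i(n_{1}x_{1}+\cdots+n_{d-1}x_{d-1})},
\]
with coefficient functions satisfying $\sum_{n_{1},\dots,n_{d-1}}\abs{c_{n_{1},\dots,n_{d-1}}(x_{d})}^{2}=\norm{f(\cdot,x_{d})}_{L^{2}(\mu^{x_{d}})}^{2}$. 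Integrating against $\nu$ and invoking the disintegration identity shows that each coefficient function belongs to $L^{2}(\nu)$.

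Next, apply the one-dimensional Kaczmarz expansion of \cite{Herr2017Fourier} to each $c_{n_{1},\dots,n_{d-1}}\in L^{2}(\nu)$ against the exponentials $\{e^{2\pi i n_{d}x_{d}}\}_{n_{d}\ge 0}$, producing scalars $c_{n_{1},\dots,n_{d-1},n_{d}}$ with
\[
c_{n_{1},\dots,n_{d-1}}(x_{d})=\sum_{n_{d}=0}^{\infty}c_{n_{1},\dots,n_{d-1},n_{d}}\,e^{2\pi in_{d}x_{d}}\quad\text{in }L^{2}(\nu),
\]
and Parseval identity in $n_{d}$. Substituting this into the inner expansion yields exactly \eqref{MRi}; the iterated-limit order specified in the theorem reflects the fact that the outer Kaczmarz expansion converges in $L^{2}(\nu)$ only after the inner slicewise series has been summed. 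Chaining the two Parseval identities through
\[
\norm{f}_{L^{2}(\mu)}^{2}=\int \norm{f(\cdot,x_{d})}_{L^{2}(\mu^{x_{d}})}^{2}\,d\nu(x_{d})
\]
gives the isometry into $\ell^{2}(\mathbb{N}^{d})$, and the continuity of each functional $f\mapsto c_{n_{1},\dots,n_{d}}$ follows.

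The main obstacle I expect is the measurability of the inner-expansion step: the inductive construction must select $c_{n_{1},\dots,n_{d-1}}(x_{d})$ as a $\nu$-measurable function of the fiber $x_{d}$ so that the outer one-dimensional Kaczmarz argument is legitimate. This is precisely where the operator/Cauchy-transform formulation of the Kaczmarz algorithm on the polydisk enters: the Kaczmarz auxiliary functions $\{g_{n}\}$ and the associated Normalized Cauchy Transform must be shown to depend measurably on $x_{d}$, so that everything assembles into a single operator on $L^{2}(\mu)$ rather than a merely pointwise construction. Establishing this compatibility, together with a matching recursive version of the slice singular definition, appears to be the real technical heart of the theorem.
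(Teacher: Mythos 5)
Your induction runs the Rokhlin disintegration in the wrong direction, and this is a genuine gap rather than a cosmetic one. Definition \ref{D:slicesingular} disintegrates $\mu$ with \emph{one-dimensional} slice measures $\gamma^{x_{1},\dots,x_{d-1}}$ in the $x_{d}$ direction and a $(d-1)$-dimensional marginal $\mu_{1}$ on $(x_{1},\dots,x_{d-1})$; the hypothesis says these one-dimensional slices are singular and that $\mu_{1}$ is recursively slice singular. You instead disintegrate as $\mu=\int \mu^{x_{d}}\,d\nu(x_{d})$ with a one-dimensional marginal $\nu$ on $x_{d}$ and $(d-1)$-dimensional fibers, and you assert that the hypothesis supplies singularity of $\nu$ and slice singularity of the fibers $\mu^{x_{d}}$. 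Neither follows. For a concrete counterexample with $d=2$: let $\mu_{1}$ be the Cantor measure, $\phi$ the Cantor (devil's staircase) function, and $\gamma^{x_{1}}=\delta_{\phi(x_{1})}$. Each slice is a point mass (singular) and $\mu_{1}$ is singular, so $\mu$ is $x_{2}$ slice singular; yet the $x_{2}$-marginal $\nu=\phi_{*}\mu_{1}$ is Lebesgue measure, so your outer one-dimensional Kaczmarz step has no singular measure to act on. Your base-for-the-outer-step therefore fails under the theorem's actual hypothesis.

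Two further problems would remain even if you strengthened the hypothesis. First, substituting the outer expansion of $c_{n_{1},\dots,n_{d-1}}(x_{d})$ into the slicewise series puts the $n_{d}$ sum \emph{innermost}, whereas \eqref{MRi} requires it outermost; these iterated limits are not interchangeable without additional argument. Second, the measurability/uniformity issue you flag at the end is not a technicality to be deferred --- it is the entire content of the paper's proof. The paper keeps $n_{d}$ outermost by defining operators $R(n,d):L^{2}(\mu)\to L^{2}(\mu_{1})$, $R(n,d)f=\int_{0}^{1}f\,e^{-2\pi inx_{d}}\,d\gamma^{x_{1},\dots,x_{d-1}}$, and proving they are \emph{effective} in the operator Kaczmarz sense (Theorem \ref{effectivethm}); the proof of effectiveness identifies the relevant matrix $U$ with the de Branges analytic operator $B(w)$ and shows $B(w)$ acts isometrically on the direct integral $L^{2}(\mu_{1})(w)=\int^{\oplus}H^{2}(\mathbb{D})\,d\mu_{1}$ because a.e.\ slice's Herglotz function is inner. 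Only then does it recurse on the $(d-1)$-dimensional marginal $\mu_{1}$. Your sketch contains no substitute for this effectiveness argument, which is where the singularity of the slices is actually used.
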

We will construct the linear functionals explicitly in Subsection \ref{ssec:coeff}, and we show the existence of slice-singular measures in Subsection 3.4.

The secondary purpose of the paper is to revisit the results of \cite{Herr2022Fourier} in the $d=2$ case.  We will utilize the results we obtain here for the $d\geq3$ case--our higher dimensional results shed new light on the nature of the Fourier series results for $d=2$ along with related constructs in harmonic analysis and operator theory.  Indeed, reverting back to the $d=1$ case, we emphasize that the existence of Fourier series there \cite{Pol93,Herr2017Fourier} is intimately related to classical results in the Hardy space $H^2(\mathbb{D})$, particularly the model subspaces of the Hardy space \cite{nikolski1986treatise,Sarason1994Sub-Hardy,cima2000backward} and the Normalized Cauchy Transform (NCT) originally introduced by Clark \cite{Clark72,Aleks89a}.
By utilizing the techniques we develop for proving Theorem \ref{Th:main1}, we will be able to provide a definition for a NCT that maps $L^2(\mu)$ isometrically into the Hardy space of the bidisk $H^2(\mathbb{D}^2)$.  Our construction also allows us to provide concrete representations for de Branges spaces \cite{DeBranges1960Some,dB61a} as subspaces of $H^2(\mathbb{D}^2)$ that correspond to the image of the NCT as we define it.

The content of Section \ref{sec:d-dim} is the proof of Theorem \ref{Th:main1}, which utilizes a direct integral decomposition of certain spaces in order to analyze the spectral theory of operators that result from the operator valued Kaczmarz algorithm.  These operators also arise from work of de Branges \cite[Lemma 2]{DeBranges1960Some} (stated in Lemma \ref{L:deBranges2} for reference), which plays a crucial role in the analysis.  Through our viewpoint of the Kaczmarz algorithm as well as the slice singular property of our measures, we are able to provide a complete description of the Fourier series coefficients that appear in Equation \eqref{MRi}.

After completing our proof of the existence of d-dimensional Fourier series expansions for slice singular measures (with $d \geq3)$, we return to the case of $d=2$ in Section \ref{sec:NCT}.  Using the insight from the de Branges Lemma and the Kaczmarz algorithm, we are able to define NCTs in two (and higher) dimensions.  Unlike the $d=1$ case, the NCTs are not uniquely defined, since they depend on the Rokhlin decomposition of the measure.  In fact, when the measure is slice singular in multiple directions, we generally obtain several different NCTs associated to the different decompositions.  In Section \ref{sec:NCT}, we define and discuss properties of the NCTs as well as their images.  We also prove when the several transforms are in fact identical, and when their images coincide.  Our main result in the section is Theorem \ref{Th:NCT-image} in which we describe the image of the NCT as a vector-valued de Branges' space.

Our paper concludes with a formalization of a Normalized Cauchy Transform for the arbitrary d-dimensional case of slice-singular measures. We discuss the image of this transform and some of its properties relating to properties of the underlying measure

As we previously mentioned, the study of Fourier series for singular measures has a substantial history.  To place our contribution within the existing theory, we recall some of that history here.

Much of the story starts with measures $\mu$ that are spectral, meaning there is a sequence of exponential functions that form an orthonormal basis of $L^{2}(\mu)$. Jorgensen and Pedersen \cite{Jorgensen1998Dense} showed that there exists singular measures that are spectral. Even more surprising, they also proved that the uniform measure on the Cantor middle third set is not spectral.

Naturally, the study of spectral measures led to understanding Fourier frames for singular measures, which has been explored in variety of different ways for decades. Recall that a sequence of vectors $\{f_{n}\}_{n=0}^{\infty}$ in a Hilbert space $H$ is called a frame if there exists $A,B>0$ such that
$$A||f||^{2}\leq \sum_{n=0}^{\infty}|\langle f, f_{n}\rangle|^{2}\leq B||f||^{2}$$ for all $f\in H$. If $A=B=1$, then $\{f_{n}\}_{n=0}^{\infty}$ is called a Parseval frame.  He, Lai, and Lau \cite{He2013Exponential} proved that a measure admitting a Fourier frame is of pure type. Namely, the measure must be either absolutely continuous or singular with respect to Lebesgue measure. Similarly, Dutkay and Lai \cite{DL14a} prove that measures with Fourier frames must satisfy a certain uniformity condition. Examples of singular measures admitting Fourier frames have been constructed in the Cantor-4 set \cite{Picioroaga2017Fourier}.

However, the question of exactly when a singular measure $\mu$ admits a Fourier frame for $L^{2}(\mu)$ is still open. Moreover, some studies also shifted to exploring a weaker condition than a singular measure admitting a Fourier frame, namely when does a measure $\mu$ possess the property that all $f\in L^{2}(\mu)$ can be expressed as a Fourier series whose coefficients are obtained by bounded linear functionals, which is weaker than $\mu$ having a frame of exponential functions.  Specifically, rather than asking whether $\mu$ has a frame of exponentials, we can consider a ``pseudodual'' condition (introduced by Li and Ogawa \cite{Li2001Pseudo-duals} in the context of wavelets) where there exists a sequence of frequencies $\{\lambda_{n}\}$ \emph{and} a sequence $\{ h_{n} \} \subset L^2(\mu)$ such that
\[ f = \sum_{n} \langle f, h_{n} \rangle e^{2 \pi i \lambda_{n} \cdot x}. \]

This psuedodual question was in fact answered in the one dimensional case in \cite{Herr2017Fourier} using the Kaczmarz algorithm and an effectiveness theorem for stationary sequences by Kwapien and Mycielski \cite{Kwapien2001Kaczmarz}. Later this work continued in two dimensions in \cite{Herr2022Fourier} where singular measure are considered to have a stronger property called ``slice singular" so that they were able to provide Fourier expansions in this setting where they relied on an operator version of the Kaczmarz algorithm.

\section{Preliminaries}

In this section, we begin by using the Rokhlin disintegration theorem to define slice-singular measures. Then we review the Fourier expansions for singular measures on the torus using the Kaczmarz algorithm. Finally, we close this section by recalling some theorems of Herglotz and de Branges, and we define direct integrals. 

\subsection{Slice-singular measures} \label{ssec:rokhlin}

Our presentation of Fourier expansions in higher dimensions requires a stronger notation of singular, similarly with the two dimensional case \cite{Herr2022Fourier}. In our framework for our Fourier expansions, we require the measure to be what we will call \textbf{slice singular} that we define using the Rokhlin Disintegration Theorem \cite{Rohlin1949Fundamental,Rohlin1949Decomposition}, which is a generalization of the theorem of Fubini-Tonelli.

\begin{thm*}[Rokhlin Disintegration]
For a Borel probability measure $\mu$ on metric space $A\times B$, there is a unique Borel probability measure $\sigma$ on $B$, namely, 
$\sigma=\mu \circ \pi_{B}^{-1}$ where $\pi_{B}: A\times B\to B$ is the projection onto $B$
and a $\sigma$ almost everywhere uniquely determined family of Borel probability measures $\{\gamma^{b}\}_{b\in B}$ on $A$ such that the following:
\begin{enumerate}
    \item If $f\in L^{1}(\mu)$, for $\sigma$ almost every $b$, $f(a,b)\in L^{1}(\gamma^{b})$, and $\int_{A}f(a,b)d\gamma^{b}\in L^{1}(\sigma).$
    \item For each $f\in L^{1}(\mu)$,
   $$\int_{A\times B}fd\mu=\int_{B}\int_{A}f(a,b)d\gamma^{b}d\sigma.$$
\end{enumerate}
\end{thm*}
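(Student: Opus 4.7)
The plan is to construct $\sigma$ as the pushforward $\mu \circ \pi_B^{-1}$ and build the family $\{\gamma^b\}$ as conditional measures via Radon--Nikodym densities, managing the exceptional null sets by working with a single countable generating algebra. That $\sigma$ is a Borel probability measure on $B$ is immediate, and the identity $\sigma = \mu\circ\pi_B^{-1}$ is forced by applying property (2) to indicators of the form $\mathbf{1}_{A\times F}$, giving uniqueness of $\sigma$.

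For the conditional measures, for each Borel $E\subset A$ I would introduce the finite Borel measure $\nu_E(F):=\mu(E\times F)$ on $B$. Since $\nu_E\leq\sigma$, one has $\nu_E\ll\sigma$, and Radon--Nikodym produces a nonnegative density $\rho_E\in L^1(\sigma)$ with $\int_F\rho_E\,d\sigma=\mu(E\times F)$. The goal is to set $\gamma^b(E):=\rho_E(b)$; the subtlety is that each $\rho_E$ is defined only up to $\sigma$-null sets, while there are uncountably many $E$'s. To handle this I would use separability of the metric space $A$ to fix a countable algebra $\mathcal{A}_0$ generating the Borel $\sigma$-algebra of $A$, and choose one representative of $\rho_E$ for each $E\in\mathcal{A}_0$. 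Linearity of Radon--Nikodym and monotone convergence give $\rho_{\bigsqcup_j E_j}=\sum_j\rho_{E_j}$ $\sigma$-a.e.\ whenever the $E_j\in\mathcal{A}_0$ are pairwise disjoint with union in $\mathcal{A}_0$, and similarly $\rho_A=1$ and $\rho_\emptyset=0$ $\sigma$-a.e. Taking the union of these countably many exceptional null sets produces a single $\sigma$-null $N\subset B$ off of which $E\mapsto\rho_E(b)$ is a countably additive probability premeasure on $\mathcal{A}_0$, and Carath\'eodory extension then yields a unique Borel probability measure $\gamma^b$ on $A$; on $N$ set $\gamma^b$ arbitrarily (e.g.\ a fixed point mass).

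For property (2), I would check the integration formula first for $f=\mathbf{1}_{E\times F}$ with $E\in\mathcal{A}_0$ (true by the construction of $\rho_E$), then extend by a monotone class argument to indicators of arbitrary Borel subsets of $A\times B$, then by linearity and the monotone and dominated convergence theorems to nonnegative measurable functions and finally to all of $L^1(\mu)$; the same chain simultaneously gives the measurability and integrability statements in (1). Uniqueness of $\{\gamma^b\}$ $\sigma$-a.e.\ follows because, if $\{\tilde\gamma^b\}$ is another family satisfying (1) and (2), then for each $E\in\mathcal{A}_0$ both $b\mapsto\gamma^b(E)$ and $b\mapsto\tilde\gamma^b(E)$ are Radon--Nikodym densities of $\nu_E$ with respect to $\sigma$ and hence agree $\sigma$-a.e.; aggregating over the countable family $\mathcal{A}_0$ and invoking Carath\'eodory uniqueness yields $\gamma^b=\tilde\gamma^b$ as Borel measures for $\sigma$-a.e.\ $b$.

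The principal obstacle is precisely the aggregation of exceptional null sets: a naive attempt to set $\gamma^b(E)=\rho_E(b)$ for every Borel $E$ would require discarding uncountably many $\sigma$-null sets, which is fatal. The essential device is to push everything through a countable generating algebra $\mathcal{A}_0$ (available because the metric space $A$ may be taken separable in the setting of interest), establish countable additivity there off a single $\sigma$-null set, and then invoke Carath\'eodory extension to recover the full measure $\gamma^b$ on Borel sets.
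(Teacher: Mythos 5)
The paper does not actually prove this statement: it is quoted as a classical theorem with citations to Rokhlin's original papers, so there is no in-paper argument to compare against. Your proposal follows the standard textbook construction (pushforward marginal $\sigma=\mu\circ\pi_B^{-1}$, Radon--Nikodym densities $\rho_E$ of $\nu_E(F)=\mu(E\times F)$ with respect to $\sigma$, a countable generating algebra $\mathcal{A}_0$, Carath\'eodory extension, monotone class), and most of the pieces are sound: the identification and uniqueness of $\sigma$, the construction of the densities, the extension of the integration formula from $\mathbf{1}_{E\times F}$ to $L^1(\mu)$, and the a.e.\ uniqueness of the family $\{\gamma^b\}$.

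There is, however, a genuine gap at the central step. You assert that ``taking the union of these countably many exceptional null sets'' yields a single $\sigma$-null set off of which $E\mapsto\rho_E(b)$ is a \emph{countably additive} premeasure on $\mathcal{A}_0$. Finite additivity really is only countably many a.e.\ identities (one per finite decomposition within the countable algebra), but countable additivity is not: the collection of countable disjoint decompositions $E=\bigsqcup_j E_j$ with all terms in $\mathcal{A}_0$ (equivalently, of sequences in $\mathcal{A}_0$ decreasing to $\emptyset$) is uncountable, so you cannot discard one null set per decomposition. A finitely additive probability on a countable algebra need not extend to a Borel measure, and countable additivity of the premeasure is precisely the hypothesis Carath\'eodory needs. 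The standard repairs are: (i) use tightness of the marginal of $\mu$ on $A$ --- which is also where one needs $A$ to be Polish or the measure to be Radon, not merely ``metric'' as in the statement --- to adjoin a countable compact class approximating each $E\in\mathcal{A}_0$ from inside up to error $1/n$; this is again only countably many a.e.\ statements, and a finitely additive set function that is inner regular with respect to a compact class is automatically countably additive; or (ii) transport $A$ to a subset of $[0,1]$ by a Borel isomorphism and build $\gamma^b$ as the Lebesgue--Stieltjes measure of the a.e.\ monotone, right-continuous function $q\mapsto\rho_{[0,q]}(b)$ defined on the rationals, where countable additivity comes for free. Either patch completes your argument; without one of them the Carath\'eodory step does not go through. (The same point shows the hypothesis ``metric space'' in the quoted statement is too weak for the theorem as literally stated, though this is harmless for the paper, whose measures all live on $[0,1)^d$.)
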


\begin{defn} \label{D:slicesingular}
For a Borel probability measure $\mu$ on $[0,1)^{d-1}\times [0,1)$ where $d\geq 2$, by the Rokhlin Disintegration Theorem, there is Borel probability measure $\mu_{1}$ on $[0,1)^{d-1}$ that we will call the \textbf{marginal measure} and family of Borel probability measures $\{\gamma^{x_{1},\dots, x_{d-1}}\}$ on $[0,1)$ indexed by $[0,1)^{d-1}$ that we will call the \textbf{slice measures} of $\mu$ such that the hypothesis of Rokhlin's theorem hold.
We can also disintegrate analogously on other variables.

Now we define a sequence of measures for convenience of notation. Let $\mu_{0}=\mu$ and for $n\geq 1$, let $\mu_{n}$ be the $d-n$ dimensional marginal measure from $\mu_{n-1}$ in the Rokhlin disintegration where the one dimensional slice measures from $\mu_{n-1}$ are taken from the right most variable. 
Furthermore, for each $k$, let 
$\{\gamma^{x_{1},\dots x_{k}}\}$ be the one dimensional slice measures of $\mu_{d-1-k}$.
We will use this sequence of measures later in section 3 to discuss where the coefficients from Theorem A come from.

We inductively define $\mu$ to be \textbf{$x_{d}$ slice singular} if $\mu_{1}$ is $x_{d-1}$ slice singular and for $\mu_{1}$ almost every point $(x_{1},\dots, x_{d-1})$, $\gamma^{x_{1},\dots, x_{d-1}}$ is singular.
If $d=2$, then we say $\mu$ is $x_{2}$ slice singular if the family $\{\gamma^{x_{1}}\}$ is singular for $\mu_{1}$ almost every $x_{1}$ and $\mu_{1}$ is singular.
\end{defn}
Again, we will begin the discussion in the case of peeling back the family of measures from the last variable to the first just for simplicity, but a definition where a family of one dimensional measures comes from another variable in any step is analogous and will be discussed later. It is important that the family of measures from the disintegration is one dimensional, which we will attempt to make clear soon.

See \cite{bezuglyi2019graph,bezuglyi2021symmetric,chang1997conditioning} for further applications of disintegration theory in probability and harmonic analysis.

\subsection{Fourier series on the torus}

However, in order to understand the case of higher dimensions, we need to understand the one dimensional case \cite{Herr2017Fourier,herr2020harmonic}.
In that paper, it was shown that for a singular Borel probability measure $\mu$ on $[0,1)$, the sequence $\{e^{2\pi i nx}\}_{n=0}^{\infty}$ is a complete stationary sequence in $L^{2}(\mu)$, and that every element of $L^{2}(\mu)$ can be expressed as a Fourier series using the Kaczmarz algorithm, which we define from Haller and Szwarc in \cite{Haller2005Kaczmarz}.
\begin{defn}\label{D:aux}
Given a complete sequence $\{e_{n}\}_{n=0}^{\infty}$ in a Hilbert space $H$,
Define the following \textbf{auxiliary sequence} in $H$ recursively:
$$g_{0}=e_{0}$$
\begin{equation}\label{Eq:aux}
g_{n}=e_{n}-\sum_{k=0}^{n-1}\langle e_{n},e_{k}\rangle g_{k}.
\end{equation}

Then $\{e_{n}\}$ is \textbf{effective} if for every
$f\in H$,
$$\sum_{k=0}^{n}\langle f,g_{k}\rangle e_{k}\to f$$ in $H$.
\end{defn}

The $\{e_{n}\}$ being effective is also equivalent to $\{g_{n}\}$ being a Parseval frame by \cite{Haller2005Kaczmarz}.
In \cite{Herr2017Fourier}, it was shown that $\{e^{2\pi i nx}\}_{n=0}^{\infty}$ is effective in $L^{2}(\mu)$ where $\mu$ is any singular Borel probability measure on $[0,1)$. Throughout the rest of the paper when we refer to an \textbf{auxilary sequence}, it will refer to the sequence generated from $\{e^{2\pi i nx}\}_{n=0}^{\infty}$ in $L^{2}(\mu)$ where $\mu$ is a Borel probability measure.

\subsection{Some useful theorems}
Recall the Hardy space we will define as follows:
$$H^{2}(\mathbb{D})=\{w\to \sum_{n=0}^{\infty}c_{n}w^{n}: \sum_{n=0}^{\infty}|c_{n}|^{2}<\infty\}.$$
Furthermore, an \textbf{inner function} $b(w)$ is a bounded analytic function on $\mathbb{D}$ such that 
$$||b(w)f(w)||_{H^{2}(\mathbb{D})}=||f(w)||_{H^{2}(\mathbb{D})}$$ for any $f\in H^{2}(\mathbb{D})$.

The following statements will be proven useful for us later in the discussion:

\begin{thm*}[Herglotz]
There is a one-to-one correspondence between singular Borel probability measures on $[0,1)$ $\mu$, and inner functions $b(w)$ on $\mathbb{D}$ such that $b(0)=0$ where
$$\frac{1+b(w)}{1-b(w)}=\int_{0}^{1}\frac{1+we^{-2\pi i x}}{1-we^{-2\pi i x}}d\mu.$$
\end{thm*}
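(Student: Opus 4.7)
The plan is to reduce the stated bijection to the classical Herglotz representation (which parametrizes analytic functions on $\mathbb{D}$ with positive real part and value $1$ at the origin by Borel probability measures on the circle, here identified with $[0,1)$ via $x\mapsto e^{2\pi i x}$) and then to show that the adjective \emph{singular} on the measure corresponds exactly to the adjective \emph{inner} on the function $b$. The bridge is the Möbius transformation $\varphi(z)=(1+z)/(1-z)$, which maps $\mathbb{D}$ biholomorphically onto the open right half plane with $\varphi(0)=1$; so $b$ is an analytic self-map of $\mathbb{D}$ with $b(0)=0$ if and only if $F:=\varphi\circ b$ is analytic on $\mathbb{D}$ with $F(0)=1$ and $\Re F\ge 0$.

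For the forward direction, I would start with a singular probability measure $\mu$ and define $F(w)$ by the integral in the statement. Expanding the integrand as a power series in $w$ (valid since $|we^{-2\pi i x}|<1$) and exchanging sum and integral shows that $F$ is analytic on $\mathbb{D}$ with $F(0)=1$. The elementary identity
\[
\Re\,\frac{1+we^{-2\pi i x}}{1-we^{-2\pi i x}} \;=\; \frac{1-|w|^{2}}{\,|1-we^{-2\pi i x}|^{2}}
\]
identifies $\Re F$ with the Poisson integral of $\mu$, so $\Re F>0$ on $\mathbb{D}$ and $b:=\varphi^{-1}\circ F=(F-1)/(F+1)$ is a well-defined analytic map of $\mathbb{D}$ into itself with $b(0)=0$. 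By Fatou's theorem on nontangential limits of Poisson integrals, $\Re F$ has nontangential boundary values equal Lebesgue-a.e.\ to the Radon--Nikodym derivative of $\mu$ with respect to Lebesgue measure, which is $0$ a.e.\ by singularity; hence $|b|=1$ a.e.\ on $\partial\mathbb{D}$, so $b$ is inner.

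For the reverse direction, given an inner $b$ with $b(0)=0$, I would set $F=(1+b)/(1-b)$; since $b$ is non-constant with $\|b\|_\infty\le 1$, the maximum modulus principle yields $b(\mathbb{D})\subset\mathbb{D}$, so $1-b$ does not vanish on $\mathbb{D}$ and $F$ is analytic there, with $\Re F=(1-|b|^{2})/|1-b|^{2}\ge 0$ and $F(0)=1$. The classical Herglotz theorem then produces a unique Borel probability measure $\mu$ on $[0,1)$ giving the claimed integral representation of $F$, and running Fatou's theorem in the reverse direction---since $|b|=1$ a.e.\ on $\partial\mathbb{D}$, $\Re F$ has nontangential boundary values equal to $0$ a.e.---shows that the Radon--Nikodym derivative of $\mu$ vanishes a.e., i.e.\ $\mu$ is singular. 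Injectivity and surjectivity of the correspondence then follow from uniqueness in Herglotz together with the bijectivity of $\varphi$.

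The main technical point is the Fatou-theorem identification of the nontangential boundary values of $\Re F$ with $d\mu/dx$; this is the only ingredient that couples the analytic condition (inner) with the measure-theoretic condition (singular), and is the crux of the ``inner $\leftrightarrow$ singular'' half of the correspondence. Everything else is bookkeeping around a single Möbius transform and the standard Herglotz representation.
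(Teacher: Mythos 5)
Your argument is correct and is the standard derivation of this correspondence: the paper itself states this Herglotz theorem as a classical background fact without proof, so there is no in-paper argument to compare against, but your route (the M\"obius reduction to the classical Herglotz representation of positive harmonic functions, plus Fatou's theorem identifying the nontangential limit of the Poisson integral with $d\mu/dx$) is exactly how this is proved in the literature on Clark measures. The only step worth making explicit in the reverse direction is that the set where the boundary values of $b$ equal $1$ has Lebesgue measure zero (since $b-1$ is a nonzero $H^\infty$ function, whose boundary values cannot vanish on a set of positive measure), so that $\Re F=(1-|b|^{2})/|1-b|^{2}$ does indeed have nontangential limit $0$ a.e.
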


We will also make use of Lemma 2 in \cite{DeBranges1960Some} that has a similar flavor to the Herglotz Theorem:

\begin{lem*}[De Branges] \label{L:deBranges2}
Let $U$ be a unitary operator on Hilbert space $H$ and $C$ be a closed subspace of $H$. For each $w\in \mathbb{D}$, there is a unique operator $B(w)$ on $C$ such that
\begin{equation}{\label{DL}}\langle \frac{1+wU^{*}}{1-wU^{*}}a,c\rangle=\langle \frac{1+B(w)}{1-B(w)}a,c\rangle\end{equation} for any $a,c\in C$. Furthermore, $B(w)$ is an analytic operator on $\mathbb{D}$ and $||B(w)||\leq |w|$ for all $w\in \mathbb{D}$.
\end{lem*}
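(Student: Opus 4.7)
The plan is to identify $B(w)$ via the operator-valued Cayley transform of the compression of $T(w):=\frac{1+wU^{*}}{1-wU^{*}}$ to $C$. The identity in \eqref{DL} says exactly that the sesquilinear form of $\frac{1+B(w)}{1-B(w)}$ on $C$ agrees with the form of $V(w):=P_{C}T(w)|_{C}$, where $P_{C}$ is the orthogonal projection onto $C$; equivalently, $V(w)=(I+B(w))(I-B(w))^{-1}$ as operators on $C$. First I would verify that $T(w)$ is a bounded operator (since $\|wU^{*}\|=|w|<1$) and has strictly positive real part. This follows either from the spectral theorem applied to $U^{*}$ together with the scalar identity $\text{Re}\frac{1+w\bar z}{1-w\bar z}=\frac{1-|w|^{2}}{|1-w\bar z|^{2}}>0$ for $z\in\mathbb{T}$, or from the direct computation $T(w)+T(w)^{*}=2(1-wU^{*})^{-1}(1-|w|^{2}I)(1-\bar w U)^{-1}$. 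Compressing preserves positive real part, so $\text{Re}\langle V(w)c,c\rangle>0$ for every nonzero $c\in C$.

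With positivity of the real part in hand, the identity $\|(V+I)c\|^{2}-\|(V-I)c\|^{2}=4\,\text{Re}\langle Vc,c\rangle\geq 0$ simultaneously shows that $V(w)+I_{C}$ is bounded below (hence invertible on $C$) and that the Cayley transform
\[
B(w):=(V(w)-I_{C})(V(w)+I_{C})^{-1}
\]
satisfies $\|B(w)\|\leq 1$. Since $V-I$ and $V+I$ commute, a direct algebraic manipulation confirms $(I+B(w))(I-B(w))^{-1}=V(w)$, proving that $B(w)$ satisfies the required identity. Uniqueness is likewise algebraic: any $B$ with $(I+B)(I-B)^{-1}=V$ forces $B=(V-I)(V+I)^{-1}$, since $(I-B)=2(V+I)^{-1}$ must be invertible whenever the formula $\frac{1+B}{1-B}$ makes sense.

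Analyticity of $B(w)$ on $\mathbb{D}$ is inherited from the convergent power series $V(w)=I_{C}+2\sum_{n\geq 1}w^{n}P_{C}(U^{*})^{n}|_{C}$ together with the uniform invertibility of $V(w)+I_{C}$ on compact subsets of $\mathbb{D}$. The step I expect to be the main obstacle is upgrading the contractive bound $\|B(w)\|\leq 1$ to the sharp bound $\|B(w)\|\leq|w|$. The plan is to reduce to the scalar Schwarz lemma: since $V(0)=I_{C}$ we have $B(0)=0$, and for each pair of unit vectors $a,c\in C$ the scalar function $w\mapsto\langle B(w)a,c\rangle$ is analytic on $\mathbb{D}$, bounded by $1$, and vanishes at $0$, so by the classical Schwarz lemma it is bounded by $|w|$. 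Taking the supremum over $\|a\|,\|c\|\leq 1$ yields $\|B(w)\|\leq|w|$, completing the proof.
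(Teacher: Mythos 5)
The paper does not actually prove this statement: it is quoted (up to replacing $wB(w)$ by $B(w)$) from Lemma 2 of de Branges' 1960 paper and used as a black box, so there is no internal proof to compare against. Your argument is a correct, essentially self-contained proof, and it follows the classical route (which is also de Branges' own): show that the Herglotz-type operator $T(w)=\frac{1+wU^{*}}{1-wU^{*}}$ has strictly positive real part --- your factorization $T(w)+T(w)^{*}=2(1-wU^{*})^{-1}(1-|w|^{2}I)(1-\bar{w}U)^{-1}$ is exactly the right computation --- compress to $C$, take the operator Cayley transform to get $B(w)=(V(w)-I)(V(w)+I)^{-1}$ with $\|B(w)\|\leq 1$, and finish with the scalar Schwarz lemma applied to $w\mapsto\langle B(w)a,c\rangle$ (using $B(0)=0$) to sharpen the bound to $\|B(w)\|\leq|w|$. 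The one point worth tightening is the parenthetical ``bounded below, hence invertible'': a lower bound $\|(V(w)+I_{C})c\|\geq\|c\|$ gives injectivity and closed range but not surjectivity. To close this, either observe that $V(w)^{*}$ has the same (positive) real part as $V(w)$, so the identical norm identity bounds $V(w)^{*}+I_{C}$ below and hence $V(w)+I_{C}$ has dense range; or note that your factorization actually gives the quantitative bound $\operatorname{Re}\langle V(w)c,c\rangle\geq\frac{1-|w|}{1+|w|}\|c\|^{2}$, so $V(w)+I_{C}$ is strictly accretive and therefore invertible. With that one line added, the proof is complete and matches the cited source in both statement and spirit.
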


By $B(w)$ being an analytic operator, we mean for each $w\in \mathbb{D}$,
$$B(w)=\sum_{n=0}^{\infty}B_{n}w^{n}\in B(C)$$ where for each $n$, $B_{n}\in B(C)$, and the convergence is in operator norm.
Note that we replace $wB(w)$ in \cite{DeBranges1960Some} with $B(w)$.

\subsection{Direct integrals}

Furthermore, we will use the definition of direct integral decomposition in a Hilbert space from \cite{Garrett1996Good} see also \cite{naimark1974direct}\cite{nielsen1980direct}.

An example of a direct integral we will consider later is the following: Let $\mu$ be a Borel probability measure on $[0,1)^{d}$ for some $d\in \mathbb{N}$. Consider the following Hilbert space of power series: $$L^{2}(\mu)(w):=\{\sum_{n=0}^{\infty}f_{n}(x)w^{n}: f_{n}(x)\in L^{2}(\mu), \sum_{n=0}^{\infty}||f_{n}||^{2}<\infty\}$$ where
$$\langle \sum_{n=0}^{\infty}f_{n}(x)w^{n}, \sum_{n=0}^{\infty}g_{n}(x)w^{n}\rangle :=
\sum_{n=0}^{\infty}\langle f_{n},g_{n}\rangle.$$
In general for any Hilbert space $H$, $H(w)$ will denote the Hilbert space of power series defined similarly as above.
One can check that
$$L^{2}(\mu)(w)=\int_{[0,1)^{d}}^{\oplus}H^{2}(\mathbb{D})d\mu$$
where 
$$\{\sum_{n=0}^{\infty}f_{n}(x)w^{n}: f_{n}(x) \text{ are measurable for each $n$ and} \ \sum_{n=0}^{\infty}f_{n}(x)w^{n}\in H^{2}(\mathbb{D}) \ for \ \mu \ a.e. \ every \ x \}$$ is our choice of measurable sections, and the inner products of these spaces coincide.

Therefore, for $F(w)=\sum_{n=0}^{\infty}f_{n}(x)w^{n}\in L^{2}(\mu)(w)$, we are justified in replacing $F(w)$ with $\int_{[0,1)^{d}}^{\oplus}F^{x}d\mu$ where for $\mu$ a.e. fixed $x$, $F^{x}(w)=\sum_{n=0}^{\infty}f_{n}(x)w^{n}\in H^{2}(\mathbb{D})$.

\section{Proof of Theorem A} \label{sec:d-dim}

Our goal in this section is to prove Theorem A. Furthermore,
we will show that the coefficients of this Fourier expansion are obtained from an extended Kaczmarz algorithm \cite{Kacz37}. 

We will now assume for the rest of the section that we have fixed a $x_{d}$ slice singular Borel probability measure $\mu$ on $[0,1)^{d-1}\times [0,1)$ where $d\geq 2$ and the slice measures are $\{\gamma^{x_{1},\dots ,x_{d-1}}\}$ and the marginal measure on $[0,1)^{d-1}$ is $\mu_{1}$.

An outline of the proof of this section is as follows: In the first subsection, we define a sequence of operators with a goal of showing their effectiveness, analogously with the Kaczmarz algorithm. The next subsection we prove an effectiveness theorem from the theory of the operator Kaczmarz algorithm using a Herglotz Theorem type argument. Finally, in the last subsection we calculate the coefficients of the expansion using recursive formulas and induction.

\subsection{Operator Kaczmarz algorithm }

Natterer \cite{Natterer1986Mathematics} introduced the Kaczmarz algorithm for bounded operators, and the following discussion is inspired from \cite{Herr2022Fourier}. We construct a sequence of operators and use the theory of the Kaczmarz algorithm to get effective convergence.

\begin{defn}
Define a sequence of operators:
$$R(n,d):L^{2}(\mu)\to L^{2}(\mu_{1})$$ for $n\in \mathbb{N}$ where
$$R(n,d)(f(x_{1},\dots, x_{d}))=\int_{0}^{1}f(x_{1},\dots, x_{d})e^{-2\pi inx_{d}}d\gamma^{x_{1},\dots, x_{d-1}}(x_{d}),$$ and the unitary
$$S(d):L^{2}(\mu)\to L^{2}(\mu)$$ where
$$S(d)(f(x_{1},\dots, x_{d}))=e^{2\pi i x_{d}}f(x_{1},\dots, x_{d}).$$
\end{defn}
It is easy to see that for each $n$,
$$R(n,d)^{*}(h(x_{1},\dots ,x_{d-1}))=e^{2\pi i nx_{d}}h(x_{1},\dots, x_{d-1})$$
so that $R(n,d)^{*}$ is a isometry, and
$$R(n+j,d)=R(n,d)S^{-j}(d)$$
for any $j\geq 0$.

Our goal is to show that the $\{R(n,d)\}$ are effective operators via the operator Kaczmarz algorithm defined in \cite{Herr2022Fourier}: 
\begin{defn}
Given Hilbert spaces $H$ and $K$ with a sequence of operators $\{R_{n}\}_{n=0}^{\infty}$ where for each $n$, $R_{n}\in B(H,K)$, define $\{G_{n}\}\in B(H, K)$ recursively as follows:
$$G_{0}=R_{0}$$ and for $n\geq 1$
$$G_{n}=R_{n}-\sum_{k=0}^{n-1}R_{n}R_{k}^{*}G_{k}.$$
We say that the $\{R_{n}\}$ are \textbf{effective} if for all $f\in H$,
$$\sum_{n=0}^{\infty}R_{n}^{*}G_{n}f=f$$ where the convergence is in $H$.
\end{defn}
We will achieve our goal by verifying the hypothesis of the following theorem in \cite{Herr2022Fourier}:

\begin{thmalpha}[Herr, Jorgensen, and Weber]\label{effectivethm}
Given Hilbert spaces $H$ and $K$ and a sequence of operators $\{R_{n}\}\subseteq B(H,K)$, let $M$ be a matrix with entries indexed by $\mathbb{N}^{2}$ that is strictly lower triangular where for $j>k$, the $jk$th entry is $R_{j}R_{k}^{*}$.
Then for $I$ acting as the identity matrix on $\oplus_{n=0}^{\infty}K$, there exists a matrix $U$ such that
$$(I+M)(I+U)=I.$$

Suppose that every $R_{n}^{*}$ is an isometry. Then $\{R_{n}\}$ are effective if and only if $\bigcup[ker(R_{n})]^{\perp}$ is linearly dense in $H$ and $U$
    is a partial isometry on $\oplus_{n=0}^{\infty}K$.
\end{thmalpha}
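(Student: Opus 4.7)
The plan is to split the argument into two parts: the formal existence of $U$, and the characterization of effectiveness through an operator-Kaczmarz identity. For existence, strict lower-triangularity of $M$ gives $(M^{p})_{jk}=0$ whenever $j-k<p$, so the Neumann series $U=\sum_{p\geq 1}(-M)^{p}$ defines a strictly lower triangular matrix whose entries are finite sums of bounded operators; an entrywise computation then verifies $(I+M)(I+U)=I=(I+U)(I+M)$.

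The crux of the characterization is to recognize the recursion as a projection iteration. I would set $f_{0}=f$ and $f_{n+1}=f_{n}-R_{n}^{*}G_{n}f$; an induction from $G_{n}=R_{n}-\sum_{k<n}R_{n}R_{k}^{*}G_{k}$ yields $G_{n}f=R_{n}f_{n}$. Since $R_{n}^{*}$ is an isometry, $P_{n}:=R_{n}^{*}R_{n}$ is the orthogonal projection of $H$ onto $(\ker R_{n})^{\perp}$, so the residual satisfies $f_{n+1}=(I-P_{n})f_{n}$, and Pythagoras telescopes to
\[
\|f_{N}\|^{2}=\|f\|^{2}-\sum_{n=0}^{N-1}\|G_{n}f\|^{2}.
\]
Effectiveness, equivalent to $f_{N}\to 0$ in $H$ for every $f$, is therefore equivalent to the block-column operator $G\colon H\to\bigoplus K$, $Gf=(G_{n}f)_{n}$, being an isometry.

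Next I would pull $U$ into the picture through a key algebraic identity. Because $R_{n}R_{n}^{*}=I_{K}$, the block identity $RR^{*}=I+M+M^{*}$ holds. Writing the recursion in block form as $(I+M)G=R$, i.e., $G=(I+U)R$, and exploiting the formal identities $(I+U)(I+M)=I$ together with their adjoints $(I+M^{*})(I+U)^{*}=I$, a direct expansion of $(I+U)RR^{*}(I+U)^{*}$ collapses to
\[
GG^{*}=I_{\bigoplus K}-UU^{*}.
\]
Thus $U$ is a partial isometry if and only if $UU^{*}$ is a projection, if and only if $GG^{*}$ is a projection, if and only if $G$ is a partial isometry. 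To close the loop, the density of $\bigcup[\ker R_{n}]^{\perp}$ in $H$ is equivalent to $\ker R=\bigcap_{n}\ker R_{n}=\{0\}$, and the strict lower triangularity of $U$ makes $I+U$ injective (solve $(I+U)v=0$ top-down), so $\ker G=\ker R$. Therefore $G$ is an isometry if and only if $G$ is a partial isometry with trivial kernel, if and only if $U$ is a partial isometry and $\bigcup[\ker R_{n}]^{\perp}$ is dense in $H$.

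The main obstacle I anticipate is the rigor of the identity $GG^{*}=I-UU^{*}$: the block-column operators $R$ and $G$ need not be bounded from $H$ into $\bigoplus K$ a priori, so the formal matrix manipulations must be certified as genuine operator identities. In the reverse direction the partial-isometry hypothesis on $U$ is exactly what supplies the boundedness of $I+U$, making $G=(I+U)R$ a bona fide operator to which $GG^{*}$ applies; in the forward direction effectiveness directly forces $G$ to be an isometry, hence bounded, so that $UU^{*}=I-GG^{*}$ is bounded as well and the entrywise matrix identity upgrades to an honest operator equality on $\bigoplus K$.
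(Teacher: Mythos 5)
Your proposal is essentially correct, but note that the paper itself offers no proof of this statement: it is Theorem B, imported verbatim from the cited earlier work of Herr, Jorgensen, and Weber and used as a black box, so there is no internal argument to compare against. What you have reconstructed is the natural operator-valued version of the Haller--Szwarc argument, and the skeleton is sound: the identity $G_nf=R_nf_n$ with $f_{n+1}=(I-R_n^*R_n)f_n$, the Pythagorean telescoping $\|f_N\|^2=\|f\|^2-\sum_{n<N}\|G_nf\|^2$ (which in particular shows $G$ is \emph{always} a contraction), the entrywise identity $GG^*=I-UU^*$ obtained from $(I+U)M=-U$ and $RR^*=I+M+M^*$, and the identification $\ker G=\bigcap_n\ker R_n=\bigl(\bigcup_n[\ker R_n]^\perp\bigr)^\perp$. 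You also correctly isolate the one genuine technical point, namely that $R$ and $U$ are only formal matrices and the identity $GG^*=I-UU^*$ must be certified on finitely supported sequences, where every product is a finite sum, before being upgraded to an operator equality. One small correction to your closing paragraph: the boundedness of $I+U$ is not what makes $G$ a bona fide operator --- $G$ is a contraction unconditionally by the telescoping identity; the role of the partial-isometry hypothesis on $U$ (respectively, of the isometry property of $G$ in the forward direction) is only to make $UU^*$ (respectively $I-GG^*$) a bounded operator so that the entrywise identity transfers the projection property between $UU^*$ and $GG^*$.
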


\subsection{Main proof of Theorem A}
To prove Theorem A, we start by verifying the hypothesis of Theorem B. 
Recall that $\mu$ is assumed $x_{d}$ slice singular with marginal measure $\mu_{1}$ on $[0,1)^{d-1}$ that is assumed $x_{d-1}$ slice singular, and $\mu$ has a family of slice measures $\{\gamma^{x_{1},\dots,x_{d-1}}\}$ on $[0,1)$ that are singular for $\mu_{1}$ almost every $(x_{1},\dots, x_{d-1})$. 

Suppose that there is an $f\in L^{2}(\mu)$ that is in $ker(R(n,d))$ for all $n$. Then we have for $\mu_{1}$ almost every $(x_{1},\dots, x_{d-1})$,
$$\int_{0}^{1}f(x_{1},\dots, x_{d})e^{-2\pi in x_{d}}d\gamma^{x_{1},\dots, x_{d-1}}(x_{d})=0 $$
for all $n$.
By the density of $\{e^{2\pi i nx_{d}}\}_{n=0}^{\infty}$ in the one dimensional case and the Rokhlin disintegration, for $\mu_{1}$ almost every $(x_{1},\dots, x_{d-1})$,
$f(x_{1},\dots, x_{d})=0$ in $L^{2}(\gamma^{x_{1},\dots, x_{d-1}})$, so $f=0$ in $L^{2}(\mu).$

Now for the last condition of Theorem B, by the Herglotz Theorem, for almost every $(x_{1},\dots, x_{d-1})$, there is an inner function $b(w)^{x_{1},\dots, x_{d-1}}$ corresponding to $\gamma^{x_{1},\dots, x_{d-1}}$.
Furthermore, by the de Branges Lemma, there is an analytic operator $B(w)$ acting on $L^{2}(\mu_{1})$ for each $w\in \mathbb{D}$ such that:

\begin{equation}\label{FC}
\begin{split}
\ip{ \frac{1+B(w)}{1-B(w)}a,c} & =\ip{ \frac{1+wS^{*}(d)}{1-wS^{*}(d)}a,c} \\
& =\int_{[0,1)^{d-1}}\int_{[0,1)}\frac{1+we^{-2\pi i x_{d}}}{1-we^{-2\pi i x_{d}}}a(x_{1},\dots, x_{d-1})\overline{c(x_{1},\dots, x_{d-1})}d\gamma^{x_{1},\dots,x_{d-1}}d\mu_{1} \\
& =\int_{[0,1)^{d-1}}a(x_{1},\dots, x_{d-1})\overline{c(x_{1},\dots, x_{d-1})}
\left[\frac{1+b(w)^{x_{1},\dots, x_{d-1}}}{1-b(w)^{x_{1},\dots, x_{d-1}}}\right]d\mu_{1} \\
\end{split}
\end{equation}
for any $a,c\in L^{2}(\mu_{1})$. Now for each $w$, define 
$$S^{+}(w)=\sum_{n=0}^{\infty}R(0,d)S^{-n}(d)R(0,d)^{*}w^{n}.$$
Notice that for each $a,c\in L^{2}(\mu_{1})$,
$$\langle (2S^{+}(w)-I)a,c\rangle=\ip{ \frac{1+wS^{*}(d)}{1-wS^{*}(d)}a,c}.$$
This gives us that $$B(w)=1-[S^{+}(w)]^{-1}.$$
Furthermore, $S^{+}(w)$ acts on polynomials in $L^{2}(\mu_{1})(w)$ by matrix multiplication in the form of $I+M$ as defined in Theorem B so that $B(w)$ acts on polynomials in $L^{2}(\mu_{1})(w)$ by matrix multiplication in the form of $-U$. Therefore, we will show that $B(w)$ acting on $L^{2}(\mu_{1})(w)$ as power series multiplication
is an isometry, and this will verify the hypothesis of Theorem B.

By the functional calculus and line (\ref{FC}), for any fixed $w$,
$B(w)$ acts on $L^{2}(\mu_{1})$ by multiplication by $b^{x_{1},\dots ,x_{d-1}}(w)$.

By the direct integral discussion in section two, consider
$$F(w)=\sum_{n=0}^{\infty}f_{n}(x_{1},\dots, x_{d-1})w^{n}=\int_{[0,1)^{d-1}}^{\oplus}F^{x_{1},\dots,x_{d-1}}(w)d\mu_{1}\in \int_{[0,1)^{d-1}}^{\oplus}H^{2}(\mathbb{D})d\mu_{1}=L^{2}(\mu_{1})(w)$$
We have
$$||B(w)F(w)||_{L^{2}(\mu_{1})(w)}^{2}=||b^{x_{1},\dots ,x_{d-1}}(w)\sum_{n=0}^{\infty}f_{n}(x_{1},\dots, x_{d-1})w^{n}||_{L^{2}(\mu_{1})(w)}^{2}$$
$$=\int_{[0,1)^{d-1}}||b^{x_{1},\dots x_{d-1}}(w)F^{x_{1},\dots,x_{d-1}}(w)||^{2}_{H^{2}(\mathbb{D})}d\mu_{1}.$$
Because multiplication by $b^{x_{1},\dots x_{d-1}}(w)$ on $H^{2}(\mathbb{D})$ is an isometry for $\mu_{1}$ almost every $(x_{1},\dots, x_{d-1})$, $B(w)$ acts as an isometry on $L^{2}(\mu_{1})(w)$. Therefore, we have shown that $\{R(n,d)\}$ are effective.

Now we have for all $f\in L^{2}(\mu)$,
$$f=\sum_{n=0}^{\infty}[G(n,d)f]e^{2\pi i nx_{d}}.$$ 

Now for each $n$, $G(n,d)f\in L^{2}(\mu_{1})$, and since $\mu_{1}$ is slice singular in $d-1$ dimensions, we may apply a recursive argument to get
$$f=\sum_{n=0}^{\infty}[\sum_{m=0}^{\infty}[G(m,d-1)G(n,d)f]e^{2\pi i mx_{d-1}}]e^{2\pi i nx_{d}}.$$

With the one dimensional case coming from \cite{Herr2017Fourier}, we get that there a bounded linear functional $G$ such that
$$f=\sum_{n_{d}=0}^{\infty}\dots \sum_{n_{1}=0}^{\infty}G(f)e^{2\pi i (n_{1}x_{1}+\dots +n_{d}x_{d})}.$$

\subsection{Coefficients} \label{ssec:coeff}

To calculate the coefficients from Theorem A, we show the following:
\begin{prop}\label{auxformula}
For any $n,d$, and $f\in L^{2}(\mu)$,
$$G(n,d)f=\int_{[0,1)}f(x_{1},\dots, x_{d})\overline{g_{n}^{x_{1},\dots, x_{d-1}}}(x_{d})d\gamma^{x_{1},\dots ,x_{d-1}}$$
where $\{g_{n}^{x_{1},\dots, x_{d-1}}(x_{d})\}$ is the auxiliary sequence associated with $\gamma^{x_{1},\dots ,x_{d-1}}$.
\end{prop}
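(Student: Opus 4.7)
The plan is to prove the formula by induction on $n$, exploiting the fact that both sides are built by parallel recursions: $G(n,d)$ via the operator Kaczmarz recursion in the definition preceding Theorem B, and $g_n^{x_1,\dots,x_{d-1}}$ via the scalar auxiliary recursion in Definition \ref{D:aux} applied in $L^2(\gamma^{x_1,\dots,x_{d-1}})$. The base case $n=0$ is immediate: $G(0,d)f = R(0,d)f = \int_0^1 f\,d\gamma^{x_1,\dots,x_{d-1}}$ by definition of $R(0,d)$, and $g_0^{x_1,\dots,x_{d-1}} \equiv 1$, so $\overline{g_0^{x_1,\dots,x_{d-1}}} = 1$, matching.

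For the inductive step, assume the formula holds for all $k < n$. The key computation is to evaluate $R(n,d)R(k,d)^{*}G(k,d)f$ for $k<n$. Since $R(k,d)^{*}$ acts as multiplication by $e^{2\pi i k x_d}$ (with $G(k,d)f$ depending only on $x_1,\dots,x_{d-1}$), we obtain, after pulling $G(k,d)f$ out of the $x_d$-integral,
\[
R(n,d)R(k,d)^{*}G(k,d)f \;=\; G(k,d)f \cdot \int_{0}^{1} e^{-2\pi i (n-k)x_d}\,d\gamma^{x_1,\dots,x_{d-1}}(x_d) \;=\; \overline{\langle e_n,e_k\rangle_{\gamma^{x_1,\dots,x_{d-1}}}}\, G(k,d)f,
\]
where $e_m(x_d) = e^{2\pi i m x_d}$ and the inner product is taken in $L^2(\gamma^{x_1,\dots,x_{d-1}})$.

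Substituting this into the recursion $G(n,d)f = R(n,d)f - \sum_{k=0}^{n-1} R(n,d)R(k,d)^{*}G(k,d)f$ and applying the inductive hypothesis to each $G(k,d)f$ gives
\[
G(n,d)f \;=\; \int_{0}^{1} f\,\overline{e_n}\,d\gamma^{x_1,\dots,x_{d-1}} \;-\; \sum_{k=0}^{n-1} \overline{\langle e_n,e_k\rangle_{\gamma^{x_1,\dots,x_{d-1}}}} \int_{0}^{1} f\,\overline{g_k^{x_1,\dots,x_{d-1}}}\,d\gamma^{x_1,\dots,x_{d-1}}.
\]
Factoring the common $\int_0^1 f(\cdot)\,d\gamma^{x_1,\dots,x_{d-1}}$ and pulling the conjugation outside the bracket yields
\[
G(n,d)f \;=\; \int_{0}^{1} f \cdot \overline{\Bigl[e_n - \sum_{k=0}^{n-1}\langle e_n,e_k\rangle_{\gamma^{x_1,\dots,x_{d-1}}} g_k^{x_1,\dots,x_{d-1}}\Bigr]}\,d\gamma^{x_1,\dots,x_{d-1}},
\]
and the bracket is precisely the defining recursion \eqref{Eq:aux} for $g_n^{x_1,\dots,x_{d-1}}$, completing the induction.

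The computation is essentially mechanical; the only thing that requires care is the placement of complex conjugates, since the formula $R(n,d)R(k,d)^{*}G(k,d)f = \overline{\langle e_n,e_k\rangle}\,G(k,d)f$ involves the conjugate of the inner product, which is exactly what is needed to recover $\overline{g_n}$ (not $g_n$) from the Kaczmarz recursion after factoring the overall bar out of the bracketed expression. Throughout, all identities are understood for $\mu_1$-almost every $(x_1,\dots,x_{d-1})$, which is the natural sense since the slice measures $\gamma^{x_1,\dots,x_{d-1}}$ are only defined up to a $\mu_1$-null set by the Rokhlin disintegration theorem.
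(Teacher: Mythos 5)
Your proof is correct and follows essentially the same route as the paper: both rest on the computation $R(n,d)R(k,d)^{*}h=\widehat{\gamma^{x_{1},\dots,x_{d-1}}}(n-k)\,h=\overline{\langle e_{n},e_{k}\rangle_{\gamma^{x_{1},\dots,x_{d-1}}}}\,h$ together with the auxiliary-sequence recursion \eqref{Eq:aux}, the only cosmetic difference being that the paper verifies the candidate against the summed identity $R(n,d)=\sum_{j=0}^{n}R(n,d)R(j,d)^{*}G(j,d)$ while you unwind the same verification as an explicit induction on $n$.
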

\begin{proof}
We have for any $n$ and any $d$, from the recursive definition of $G(n,d)$,
\begin{equation}\label{C}R(n,d)=\sum_{j=0}^{n}R(n,d)R(j,d)^{*}G(j,d).\end{equation}
We show that $\int_{[0,1)}f(x_{1},\dots, x_{d})\overline{g_{n}^{x_{1},\dots, x_{d-1}}}(x_{d})d\gamma^{x_{1},\dots ,x_{d-1}}$ satisfies line (\ref{C}). An easy calculation gets that for any $h(x_{1},\dots, x_{d-1})\in L^{2}(\mu_{1})$,
$$R(n,d)R(j,d)^{*}h(x_{1},\dots, x_{d-1})=\widehat{\gamma^{x_{1},\dots, x_{d-1}}}(n-j)h(x_{1},\dots, x_{d-1}).$$
Also, it is easy to see from the recursive definition of the auxiliary sequence, we know for any fixed $(x_{1},\dots, x_{d-1})$,
$$\sum_{j=0}^{n}\overline{\widehat{\gamma^{x_{1},\dots, x_{d-1}}}(n-j)}g_{j}^{x_{1},\dots, x_{d-1}}(x_{d})=e^{2\pi i n x_{d}}.$$
Now note that $\int_{[0,1)}f(x_{1},\dots, x_{d})\overline{g_{n}^{x_{1},\dots, x_{d-1}}}(x_{d})d\gamma^{x_{1},\dots ,x_{d-1}}$ satisfies the desired equation.
\end{proof}
To close this section, recall that for $\mu$ that is $x_{d}$ slice singular, $\{g_{n_{k}}^{x_{1},\dots, x_{k-1}}\}$ is the auxiliary sequence from the slice measures of $\mu_{d-k}$ for each $k$, and $\{g_{n_{1}}\}$ is the auxiliary sequence of $\mu_{d-1}$, which are defined in Equation (\ref{Eq:aux}).
\begin{prop}
Under the hypothesis from Theorem A, $$c_{n_{1}\dots n_{d}}=\langle f(x_{1},\dots, x_{d}),g_{n_{d}}^{x_{1},\dots, x_{d-1}}\dots g_{n_{1}}\rangle_{\mu}$$
for all $n_{1},\dots, n_{d}\in \mathbb{N}$.
Furthermore, $\{g_{n_{d}}^{x_{1},\dots, x_{d-1}}\dots g_{n_{1}}\}$ is a Parseval frame in $L^{2}(\mu)$.
\end{prop}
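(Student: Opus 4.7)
The plan is to prove the coefficient formula by induction on $d$, with the base case $d=1$ supplied by \cite{Herr2017Fourier}, and then to read off the Parseval frame property directly from the isometry assertion in Theorem \ref{Th:main1}.

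For the inductive step, the starting point is the recursive unwinding carried out at the end of Subsection 3.2: the coefficient $c_{n_1,\dots,n_d}$ is precisely $G(n_1,1)\circ G(n_2,2)\circ\cdots\circ G(n_d,d)$ applied to $f$, where each $G(n_k,k)$ is the Kaczmarz auxiliary operator associated with the $x_k$-slice structure of $\mu_{d-k}$. First, apply Proposition \ref{auxformula} to the outermost operator to obtain
$$G(n_d,d)f(x_1,\dots,x_{d-1})=\int_{[0,1)}f(x_1,\dots,x_d)\,\overline{g_{n_d}^{x_1,\dots,x_{d-1}}(x_d)}\,d\gamma^{x_1,\dots,x_{d-1}},$$
as an element of $L^2(\mu_1)$. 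Since $\mu_1$ is itself $x_{d-1}$ slice singular on $[0,1)^{d-1}$, apply the induction hypothesis to this $L^2(\mu_1)$-valued function, yielding
$$G(n_1,1)\cdots G(n_{d-1},d-1)[G(n_d,d)f]=\int_{[0,1)^{d-1}}[G(n_d,d)f]\,\overline{g_{n_{d-1}}^{x_1,\dots,x_{d-2}}\cdots g_{n_1}}\,d\mu_1.$$
Substituting the previous display and invoking the Rokhlin disintegration of $\mu$ with marginal $\mu_1$ and slices $\{\gamma^{x_1,\dots,x_{d-1}}\}$ collapses the iterated integral into $\int f\,\overline{g_{n_d}^{x_1,\dots,x_{d-1}}\cdots g_{n_1}}\,d\mu$, which is exactly the inner product claimed.

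For the Parseval frame property, Theorem \ref{Th:main1} already asserts that $f\mapsto\{c_{n_1,\dots,n_d}\}$ is an isometry into $\ell^2(\mathbb{N}^d)$, i.e.\ $\sum|c_{n_1,\dots,n_d}|^2=\|f\|_\mu^2$. Combining this with the coefficient formula just established gives
$$\sum_{n_1,\dots,n_d}\left|\langle f,g_{n_d}^{x_1,\dots,x_{d-1}}\cdots g_{n_1}\rangle_\mu\right|^2=\|f\|_\mu^2,$$
which is the Parseval frame identity for the family $\{g_{n_d}^{x_1,\dots,x_{d-1}}\cdots g_{n_1}\}$ in $L^2(\mu)$. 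That each such product is genuinely an element of $L^2(\mu)$ follows from a one-slice-at-a-time application of Rokhlin together with the bound $\|g_n\|\le 1$ intrinsic to effective auxiliary sequences.

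I expect the only genuine technical care to be the justification of the iterated use of Rokhlin disintegration that collapses nested integrals into a single integral against $\mu$: one must verify that the joint integrand is $\mu$-measurable and integrable so that the order of integration prescribed by Rokhlin really reproduces integration against $\mu$. The measurable-section framework reviewed in Subsection 2.4, together with the $\sigma$-a.e.\ uniqueness of the slice measures, handles this, but the bookkeeping across $d-1$ inductive applications is the place where all of the content of the argument actually sits; everything else is a mechanical iteration of Proposition \ref{auxformula} together with the operator identity $c_{n_1,\dots,n_d}=G(n_1,1)\cdots G(n_d,d)f$ already established in the proof of Theorem \ref{Th:main1}.
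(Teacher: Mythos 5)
Your argument for the coefficient formula is essentially the paper's: induction on $d$, Proposition \ref{auxformula} applied to the outermost slice, the inductive hypothesis applied to $G(n_d,d)f\in L^2(\mu_1)$, and the Rokhlin disintegration to collapse the nested inner products into a single $L^2(\mu)$ pairing. That half is fine (the paper anchors the induction at $d=2$ rather than $d=1$, but that is cosmetic).

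The Parseval-frame half has a genuine circularity. You derive it from the assertion in Theorem \ref{Th:main1} that $f\mapsto\{c_{n_1,\dots,n_d}\}$ is an isometry into $\ell^2(\mathbb{N}^d)$. But that isometry is not established in the ``Main proof of Theorem A'' subsection, which only proves effectiveness of the operators $\{R(n,d)\}$ and hence the norm-convergent expansion; the isometry clause of Theorem \ref{Th:main1} is exactly what the Parseval statement of this proposition is meant to supply once the coefficient formula identifies the $c_{n_1,\dots,n_d}$ as frame coefficients. Nothing in Theorem \ref{effectivethm} or in the effectiveness argument hands you $\sum_n\norm{G(n,d)f}^2=\norm{f}^2$ for free. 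The paper closes this gap by a direct computation: write $\ip{f,g_{n_d}^{x_1,\dots,x_{d-1}}\cdots g_{n_1}}_\mu=\ip{\ip{f,g_{n_d}^{x_1,\dots,x_{d-1}}}_{\gamma^{x_1,\dots,x_{d-1}}},\,g_{n_{d-1}}^{x_1,\dots,x_{d-2}}\cdots g_{n_1}}_{\mu_1}$, sum first over $(n_1,\dots,n_{d-1})$ using the inductive Parseval identity in $L^2(\mu_1)$, then move the remaining sum over $n_d$ inside the $\mu_1$-integral and use that $\{g_{n_d}^{x_1,\dots,x_{d-1}}\}$ is a Parseval frame for $L^2(\gamma^{x_1,\dots,x_{d-1}})$ for $\mu_1$-a.e.\ point (a consequence of one-dimensional effectiveness via Haller--Szwarc), and finally apply Rokhlin once more to recover $\norm{f}_\mu^2$. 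You should replace the appeal to Theorem \ref{Th:main1} with this computation, or else supply an independent proof that effectiveness of the operator Kaczmarz algorithm forces the $G(n,d)$ to satisfy the operator Parseval identity; as written, the proposition and the theorem each lean on the other.
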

\begin{proof}
We proceed with an induction on $d$.
Recall that for all $f\in L^{2}(\mu),$
$$f=\sum_{n_{d}=0}^{\infty}[G(n_{d},d)f]e^{2\pi i n_{d}x_{d}}.$$
For the $d=2$ case, we have by Proposition \ref{auxformula},
$$G(n_{2},2)f=\int_{[0,1)}f(x_{1},x_{2})\overline{g_{n_{2}}^{x_{1}}(x_{2})}d\gamma^{x_{1}}$$ for all $n_{2}$.
Therefore, by the one dimensional case \cite{Herr2017Fourier} for all $n_{2}$,
\begin{equation}
\begin{split}
G(n_{2},2)f & =\sum_{n_{1}=0}^{\infty}\langle \int_{[0,1)}f(x_{1},x_{2})\overline{g_{n_{2}}^{x_{1}}(x_{2})}d\gamma^{x_{1}},g_{n_{1}}(x_{1})\rangle_{\mu_{1 }}e^{2\pi i n_{1} x_{1}}\\
& =\sum_{n_{1}=0}^{\infty}\langle f(x_{1},x_{2}),g_{n_{2}}^{x_{1}}g_{n_{1}}(x_{1})\rangle_{\mu}e^{2\pi i n_{1}x_{1}}.
\end{split}
\end{equation}
It follows that
$$c_{n_{1}n_{2}}=\langle f(x_{1},x_{2}),g_{n_{2}}^{x_{1}}g_{n_{1}}(x_{1})\rangle_{\mu}$$ for all $n_{1},n_{2}\in \mathbb{N}$.

Now let $f\in L^{2}(\mu)$. Recall that $\{g_{n_{1}}(x_{1})\}$ is a Parseval frame in $L^{2}(\mu_{1})$ and that for $\mu_{1}$ almost every $x_{1},$ $\{g_{n_{2}}^{x_{1}}\}$ is a Parseval frame in $L^{2}(\gamma^{x_{1}})$. Consider the following:
\begin{equation}\label{pf}
\begin{split}
\sum_{n_{2}}\sum_{n_{1}}|\langle f, g_{n_{2}}^{x_{1}}g_{n_{1}}(x_{1})\rangle_{\mu}|^{2} & =\sum_{n_{2}}\sum_{n_{1}}|\langle \langle f,g_{n_{2}}^{x_{1}}\rangle_{\gamma^{x_{1}}},g_{n_{1}}\rangle_{\mu_{1}}|^{2}\\
& =\sum_{n_{2}}||\langle f,g_{n_{2}}^{x_{1}}\rangle_{\gamma^{x_{1}}}||_{\mu_{1}}^{2}\\
& =\int_{0}^{1}\sum_{n_{2}}|\langle f,g_{n_{2}}^{x_{1}}\rangle_{\gamma^{x_{1}}}|^{2}d\mu_{1}\\
& =\int_{0}^{1}||f||_{\gamma^{x_{1}}}^{2}\mu_{1}\\
& =||f||_{\mu}^{2}.
\end{split}
\end{equation}
Therefore, $\{g_{n_{2}}^{x_{1}}g_{n_{1}}\}$ is a Parseval frame in $L^{2}(\mu)$.

For the induction step, we know that by Proposition \ref{auxformula} for all $n_{d}$,
$$G(n_{d},d)f=\langle f, g_{n_{d}}^{x_{1},\dots, x_{d-1}}\rangle_{\gamma^{x_{1},\dots, x_{d-1}}}\in L^{2}(\mu_{1}).$$
By our inductive hypothesis, $\langle f, g_{n_{d}}^{x_{1},\dots, x_{d-1}}\rangle_{\gamma^{x_{1},\dots, x_{d-1}}}$ has $\{\langle \langle f, g_{n_{d}}^{x_{1},\dots, x_{d-1}}\rangle_{\gamma^{x_{1},\dots, x_{d-1}}}, g_{n_{d-1}}^{x_{1},\dots, x_{d-2}}\dots g_{n_{1}}\rangle_{\mu_{1}}\}$
as its Fourier coefficients. The first result now follows.

Also, showing $\{g_{n_{d}}^{x_{1},\dots, x_{d-1}}\dots g_{n_{1}}\}$ is a Parseval frame in $L^{2}(\mu)$ is similar to the base case since we assume that $\{g_{n_{d-1}}^{x_{1},\dots, x_{d-2}}\dots g_{n_{1}}\}$ is a Parseval frame in $L^{2}(\mu_{1})$, and we know that for $\mu_{1}$ almost every $(x_{1},\dots x_{d-1})$, $\{g_{n_{d}}^{x_{1},\dots, x_{d-1}}\}$ is a Parseval frame in $L^{2}(\gamma^{x_{1},\dots, x_{d-1}})$.
\end{proof}

\subsection{Examples of slice singular measures}
To illustrate commonality of these slice singular measures, we describe a couple of examples.
\begin{ex}
Suppose $d\geq 2$, and let $\mu=\prod_{k=1}^{d}\mu_{k}$ be a product of $d$ finite singular Borel measures on $[0,1)$. For any $f\in L^{1}(\mu)$ that is positive on $[0,1)^{d}$ where
$\int_{[0,1)^{d}}f d\mu=1$, we have $f d\mu$ is $x_{d}$ slice singular. To see this, one can see that the slice measures of $f d\mu$ are $$\gamma^{x_{1},\dots,x_{d-1}}=\frac{f(x_{1},\dots,x_{d-1},x_{d})\mu_{d}}{\int_{0}^{1}f(x_{1},\dots,x_{d-1},x_{d})d\mu_{d}(x_{d})},$$ which exists for $\prod_{k=1}^{d-1}\mu_{k}$ almost every $(x_{1},\dots, x_{d-1})$ by Fubini-Tonelli Theorem. Also, the marginal measure of $f d\mu$ is $$\tilde{\mu_{1}}=\big[\int_{0}^{1}f(x_{1},\dots,x_{d-1},x_{d})d\mu_{d}(x_{d}) \big] \big[\prod_{k=1}^{d-1}\mu_{k} \big].$$ Then for $\tilde{\mu_{1}}$ almost every $(x_{1},\dots, x_{d-1})$, $\gamma^{x_{1},\dots,x_{d-1}}$ are absolutely continuous to $\mu_{d}$ and are therefore singular, and $\tilde{\mu_{1}}$ is slice singular by a recursive argument.

Furthermore, if we consider one dimensional slice measures from $\mu$ by Rokhlin's theorem that come from a variable other than $x_{d}$, that family of slice measures would still be singular for almost every member of that family with respect to the marginal measure. Additionally, that marginal measure would have singular slice measures on any of its variables as well. 
\end{ex}
This bring us to the following definition:
\begin{defn}
For a Borel probability measure $\mu$ on $[0,1)^{d}$ where $d\geq 2$, we define $\mu$ as \textbf{slice singular in any variable order} if for any $k$, the one dimensional family of slice measures on variable $x_{k}$ are singular almost everywhere with respect to the marginal measure $\tilde{\mu_{k}}$ on variables $(x_{1},\dots, x_{k-1},x_{k+1},\dots x_{d})$, and $\tilde{\mu_{k}}$ is slice singular in any variable order.

If $d=2$, then each $\tilde{\mu_{k}}$ being slice singular in any variable order means $\tilde{\mu_{k}}$ are singular.
\end{defn}
More examples of slice singular measures arise from iterated functions systems (IFS), which are discussed in \cite{Herr2022Fourier}. As a reminder, the following Hutchinson's Theorem \cite{Hutchinson1981Fractals} establishes that IFS of contractions give rise to Borel probability measures.

\begin{thm*}[Hutchinson]
Let $(X,d)$ be a complete metric space and $\phi=\{\phi_{1},\dots ,\phi_{N}\}$ be a finite set of contractions on $X$. Then there exists a unique compact set $A$ such that
$$A=\bigcup_{k=1}^{N}\phi_{k}(A).$$
Furthermore, $A$ is the closure of the set of fixed points of finite compositions of members of $\phi$.

Moreover, given $w_{1},\dots, w_{N}>0$ such that $\sum w_{k}=1$, there exists a unique Borel probability measure $\mu$ called the invariant measure that is supported on $A$ such that
$$\int fd\mu=\sum_{k=1}^{N}w_{k}\int f\circ \phi_{k} d\mu$$
holds for every continuous function $f$.
\end{thm*}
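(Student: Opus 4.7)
The plan is to prove both halves of the statement by two applications of Banach's fixed point theorem, in two different complete metric spaces built from $(X,d)$. Throughout, let $c = \max_{k} \mathrm{Lip}(\phi_{k}) < 1$.

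For the existence and uniqueness of the attractor $A$, I would work in the space $\mathcal{K}(X)$ of non-empty compact subsets of $X$ equipped with the Hausdorff metric $d_H(K,L) = \max\{\sup_{x \in K} d(x,L),\ \sup_{y \in L} d(y,K)\}$. A standard argument shows that $(\mathcal{K}(X), d_H)$ is a complete metric space whenever $(X,d)$ is complete. Define the Hutchinson operator $F : \mathcal{K}(X) \to \mathcal{K}(X)$ by $F(K) = \bigcup_{k=1}^{N} \phi_{k}(K)$. A short estimate using the elementary fact $d_H\bigl(\bigcup_k A_k, \bigcup_k B_k\bigr) \leq \max_k d_H(A_k, B_k)$ together with $d_H(\phi_k(K), \phi_k(L)) \leq c_k \, d_H(K,L)$ gives $d_H(F(K),F(L)) \leq c \, d_H(K,L)$, so $F$ is a contraction. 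Banach's fixed point theorem produces a unique compact $A$ with $F(A)=A$. The characterization of $A$ as the closure of fixed points of finite compositions then follows by starting the iteration at any singleton $\{x_0\}$: each $F^n(\{x_0\})$ is the finite set of images of $x_0$ under length-$n$ compositions, the fixed point of any composition lies in $A$, and the orbit closure is dense in $A$ by the convergence $F^n(\{x_0\}) \to A$.

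For the invariant measure, I would work in the space $\mathcal{P}(A)$ of Borel probability measures on the compact set $A$, equipped with the Kantorovich--Rubinstein (Wasserstein-$1$) metric
\begin{equation*}
W_1(\mu,\nu) = \sup\Bigl\{ \int f\, d\mu - \int f\, d\nu : f \in C(A),\ \mathrm{Lip}(f) \leq 1\Bigr\}.
\end{equation*}
Since $A$ is compact, $(\mathcal{P}(A),W_1)$ is complete and metrizes weak convergence. Define the Markov (transfer) operator
\begin{equation*}
T\mu = \sum_{k=1}^{N} w_{k}\, (\phi_{k})_{\ast}\mu,
\end{equation*}
which maps $\mathcal{P}(A)$ to itself because $\phi_k(A) \subseteq A$. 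For any $1$-Lipschitz $f$ on $A$, each $f \circ \phi_k$ has Lipschitz constant at most $c_k \leq c$, so
\begin{equation*}
\int f\, d(T\mu - T\nu) = \sum_{k=1}^{N} w_k \int (f \circ \phi_k)\, d(\mu - \nu) \leq c \cdot W_1(\mu,\nu),
\end{equation*}
and taking the supremum over $f$ yields $W_1(T\mu, T\nu) \leq c\, W_1(\mu,\nu)$. Banach's fixed point theorem produces a unique invariant $\mu$, and the fixed-point equation $\mu = T\mu$ is, by definition of pushforward, exactly the integral identity $\int f\, d\mu = \sum_k w_k \int f \circ \phi_k\, d\mu$ tested against all continuous $f$.

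The main obstacle in this plan is not the contraction estimates, which are routine once the setup is right, but rather establishing the completeness of $(\mathcal{K}(X),d_H)$ and of $(\mathcal{P}(A),W_1)$, and verifying that $T$ preserves $\mathcal{P}(A)$ (which uses that the unique attractor $A$ from the first part is the natural support set). A minor subtlety is to confirm that the fixed point $\mu$ is actually supported on $A$ and not merely on some larger compact set: this is immediate here because the iteration $T^n \delta_{x_0}$ is supported in $F^n(\{x_0\})$, whose closure is $A$, and $W_1$-convergence preserves support in $A$.
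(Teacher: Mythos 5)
The paper does not prove this statement: it is quoted verbatim as Hutchinson's theorem and cited to \cite{Hutchinson1981Fractals}, so there is no in-paper argument to compare against. Your proposal is, in substance, Hutchinson's original proof, and it is correct: the attractor comes from Banach's fixed point theorem on $(\mathcal{K}(X),d_{H})$, and the invariant measure from Banach's fixed point theorem for the Markov operator $T\mu=\sum_{k}w_{k}(\phi_{k})_{*}\mu$ on $(\mathcal{P}(A),W_{1})$ (Hutchinson's ``$L$-metric''); the contraction estimates you give are exactly the standard ones, and the identification of the fixed-point equation $\mu=T\mu$ with the integral identity against continuous $f$ is immediate since measures on a compact metric space are determined by such integrals. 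Two small points deserve tightening. First, your argument for the density of fixed points of finite compositions actually shows density of the \emph{orbit} $F^{n}(\{x_{0}\})$, which consists of images $\phi_{w}(x_{0})$, not fixed points; the clean argument is that for each word $w$ of length $n$ the unique fixed point $p_{w}$ of $\phi_{w}$ lies in $\phi_{w}(A)\subseteq A$, and the sets $\phi_{w}(A)$ cover $A$ with diameters at most $c^{n}\operatorname{diam}(A)$, so the fixed points are $c^{n}\operatorname{diam}(A)$-dense in $A$. Second, if ``supported on $A$'' is read as $\operatorname{supp}\mu=A$ (as in Hutchinson, where all $w_{k}>0$), you should add that $\operatorname{supp}(T\mu)=\overline{\bigcup_{k}\phi_{k}(\operatorname{supp}\mu)}$, so $\operatorname{supp}\mu$ is a nonempty compact set fixed by the Hutchinson operator and hence equals $A$ by the uniqueness from the first half. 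Neither point affects the soundness of the overall plan.
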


To understand which IFS generate slice singular measures, we establish some Lemmas:

\begin{lem}\label{onedimmarginal}
    If every marginal measure in dimension $d-1$ is slice singular in any variable order of a Borel probability measure $\mu$ on $[0,1)^{d}$ where $d\geq 3$, then for any $1\leq k\leq d$ there exists $A_{k}\subseteq [0,1)$ with Lebesgue measure one such that
$$\mu([0,1)\times \dots \times A_{k} \times \dots \times [0,1))=0 $$
where $A_{k}$ is in the $x_{k}$ variable spot.
\end{lem}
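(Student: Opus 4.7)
The plan is to reduce the statement to a claim about one-dimensional marginals of $\mu$. Write $\pi_{k}:[0,1)^{d}\to [0,1)$ for the projection onto the $x_{k}$ coordinate and $\nu_{k}=\mu\circ \pi_{k}^{-1}$ for the corresponding one-dimensional marginal. Then
$$\mu([0,1)\times \cdots \times A_{k}\times \cdots \times [0,1))=\nu_{k}(A_{k}),$$
so the conclusion is equivalent to showing that each $\nu_{k}$ is singular with respect to Lebesgue measure on $[0,1)$; the set $A_{k}$ is then the complement of any Borel carrier of $\nu_{k}$ that has Lebesgue measure zero.

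Next I would isolate and prove by induction on $m\geq 2$ the following auxiliary claim: \emph{if $\nu$ is a Borel probability measure on $[0,1)^{m}$ that is slice singular in any variable order, then every one-dimensional marginal of $\nu$ is singular with respect to Lebesgue measure}. The base case $m=2$ is precisely the last sentence of Definition~3.9, which declares that the one-dimensional $\tilde{\nu}_{k}$ are singular. For the inductive step with $m\geq 3$, given a target variable $x_{\ell}$, I would pick any index $\ell'\neq \ell$ and pass to the $(m-1)$-dimensional marginal $\tilde{\nu}_{\ell'}$; by the recursive clause of the definition, $\tilde{\nu}_{\ell'}$ is again slice singular in any variable order, so the inductive hypothesis applies and its $x_{\ell}$-marginal is singular. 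Because iterated marginalization just integrates out the remaining variables, the $x_{\ell}$-marginal of $\tilde{\nu}_{\ell'}$ equals the $x_{\ell}$-marginal of $\nu$, completing the induction.

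To finish the lemma, fix $1\leq k\leq d$ and choose any index $j\neq k$, which is possible since $d\geq 3$. By hypothesis the $(d-1)$-dimensional marginal $\mu^{(j)}$ of $\mu$ (obtained by dropping the $x_{j}$ coordinate) is slice singular in any variable order, and $d-1\geq 2$, so the auxiliary claim applies to $\mu^{(j)}$ and yields that its $x_{k}$-marginal is singular. That marginal equals $\nu_{k}$, so $\nu_{k}$ is singular, and we take $A_{k}$ to be any Borel set of full Lebesgue measure disjoint from a carrier of $\nu_{k}$.

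The only place where I expect real care is the bookkeeping of the recursion: one must keep track of which variable is being dropped at each step and verify that the successive marginals one encounters are actually marginals of the original $\mu^{(j)}$, so that the recursive clause of ``slice singular in any variable order'' legitimately propagates all the way down to dimension two, where the explicit base-case clause of the definition is invoked. Once that bookkeeping is transparent the lemma is immediate; no analytic input beyond the bare definition of singularity is required.
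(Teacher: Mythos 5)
Your proposal is correct and follows essentially the same route as the paper: both reduce the claim to showing that each one-dimensional marginal $\mu\circ\pi_k^{-1}$ is singular, by descending through the recursive clause of ``slice singular in any variable order'' until the explicit $d=2$ base case delivers singularity of the one-dimensional marginals. The paper writes out only the $d=3$ case and waves at induction for $d>3$; your auxiliary claim just makes that induction explicit.
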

\begin{proof}
Assume that $d=3$. Since each marginal measure in two variables is slice singular in any variable order, both of the one dimensional marginals measures of each
marginal measure in two variables are singular.
Therefore, there exists Borel $B_{1},B_{2}\subseteq [0,1)$ with Lebesgue measure one for each marginal measure in two variables such that $[0,1)\times B_{1}$ and $B_{2}\times [0,1)$ are measure zero with respect to each marginal measure in two variables respectively. The claim follows for $d=3$. The case for $d>3$ follows similarly by induction. 
\end{proof}

\begin{lem}\label{d-1dimmarginal}
A Borel probability measure $\mu$ on $[0,1)^{d}$ where $d\geq 2$ is slice singular in any variable order if and only if every marginal measure in dimension $d-1$ is slice singular in any variable order.
\end{lem}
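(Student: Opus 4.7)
The forward direction is immediate from the recursive definition: if $\mu$ is slice singular in any variable order, then by definition each $\tilde{\mu}_k$ (for $k=1,\ldots,d$) is slice singular in any variable order, and these $d$ measures are exactly the $(d-1)$-dimensional marginals of $\mu$.

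For the converse, assume every $(d-1)$-dimensional marginal of $\mu$ is slice singular in any variable order. Verifying that $\mu$ itself is slice singular in any variable order splits into two clauses: (a) each $\tilde{\mu}_k$ is slice singular in any variable order, and (b) for every $k$, the Rokhlin slice measures $\gamma^{(x_{1},\dots,\hat{x}_{k},\dots,x_{d})}$ on the $x_{k}$-axis are singular for $\tilde{\mu}_{k}$-a.e. point. Clause (a) is exactly the hypothesis, so all the content lies in (b). For $d\geq 3$ the plan is to feed the hypothesis into Lemma \ref{onedimmarginal} to obtain, for each $k$, a Borel set $A_{k}\subseteq[0,1)$ of full Lebesgue measure with $\mu([0,1)\times\cdots\times A_{k}\times\cdots\times[0,1))=0$. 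Reading this statement coordinatewise says that the one-dimensional push-forward of $\mu$ onto the $x_{k}$-axis is carried by the Lebesgue-null set $A_{k}^{c}$. Factoring this push-forward through the Rokhlin disintegration $\mu=\int \gamma^{y}\,d\tilde{\mu}_{k}(y)$ on the $x_{k}$ direction yields
$$\int \gamma^{y}(A_{k})\,d\tilde{\mu}_{k}(y)=0,$$
so $\gamma^{y}(A_{k})=0$ for $\tilde{\mu}_{k}$-a.e. $y$. Hence each such slice measure is supported on the Lebesgue-null set $A_{k}^{c}$, i.e. is singular, which is clause (b).

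The base case $d=2$ does not require Lemma \ref{onedimmarginal}: the hypothesis that each one-dimensional $\tilde{\mu}_{k}$ is slice singular in any variable order just means $\tilde{\mu}_{k}$ is singular, and the same push-forward argument with the Lebesgue-null support of $\tilde{\mu}_{k}$ in place of $A_{k}^{c}$ gives singularity of the slice measures. The main obstacle is bookkeeping: one must carefully distinguish the $(d-1)$-dimensional marginal $\tilde{\mu}_{k}$ (integrating out $x_{k}$) from the one-dimensional push-forward onto the $x_{k}$-axis (integrating out everything \emph{except} $x_{k}$) produced by Lemma \ref{onedimmarginal}, and keep track of which coordinate indexes each slice family. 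Once this is organized, the passage from singular one-dimensional marginal to singular Rokhlin slice measure is the standard observation that the integral of the non-negative function $y\mapsto\gamma^{y}(A_{k})$ against $\tilde{\mu}_{k}$ vanishes iff the function vanishes $\tilde{\mu}_{k}$-a.e.
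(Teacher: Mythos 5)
Your proposal is correct and follows essentially the same route as the paper's proof: both directions hinge on Lemma \ref{onedimmarginal} together with the identity $\mu([0,1)\times\cdots\times A_{k}\times\cdots\times[0,1))=\int \gamma^{y}(A_{k})\,d\tilde{\mu}_{k}(y)$, the only cosmetic difference being that you run the argument directly (the integral vanishes, hence the integrand vanishes a.e.) while the paper phrases it as a contradiction starting from a positive-measure set of non-singular slices. Your separate treatment of the $d=2$ base case, where Lemma \ref{onedimmarginal} is not available, is a small bookkeeping point the paper leaves implicit.
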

\begin{proof}
The direction of assuming that $\mu$ is slice singular in any variable order follows by definition.

Now suppose  every marginal measure in dimension $d-1$ is slice singular in any variable order.
Suppose for the sake of contradiction that there is for example a Borel set $B\subseteq [0,1)^{d-1}$ of $\mu_{1}$ positive measure where for $(x_{1},\dots ,x_{d-1})\in B$, $\gamma^{x_{1},\dots ,x_{d-1}}$ are not singular, meaning that $\mu$ is not $x_{d}$ slice singular. From Lemma \ref{onedimmarginal}, find $A_{d}\subseteq [0,1)$ of Lebesgue measure one such that 
$$\mu([0,1)\times \dots \times A_{d})=0.$$
But notice that $$\gamma^{x_{1},\dots ,x_{d-1}}(A_{d})>0$$ for $(x_{1},\dots ,x_{d-1})\in B$ so we also have
$$\mu([0,1)\times \dots \times A_{d})\geq \int_{B}\gamma^{x_{1},\dots ,x_{d-1}}(A_{d})d\mu_{1}>0,$$ which is a contradiction. Showing that $\mu$ is slice singular in other directions is similar.
\end{proof}

These Lemmas give us a nice criteria to check when measures are slice singular in any variable order via the following Theorem.

\begin{defn}
Let $\pi_{k}: [0,1)^{d}\to [0,1)$ be the projection onto the $k$th coordinate.
\end{defn}
\begin{thm}\label{identityslicesingular}
A Borel probability measure $\mu$ on $[0,1)^{d}$ is slice singular in any variable order if and only if for all $1\leq k\leq d$, $\mu \circ \pi_{k}^{-1}$, the one-dimensional marginal measure on variable $x_{k}$, is a singular measure on $[0,1)$.
\end{thm}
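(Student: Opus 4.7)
The plan is to prove both implications by induction on $d$, with the case $d=2$ doing the substantive work and the inductive step handled almost entirely by Lemma \ref{d-1dimmarginal}. Throughout, I will use the key observation that a one-dimensional marginal of a $(d-1)$-dimensional marginal $\tilde{\mu}_k$ of $\mu$, taken onto a coordinate $x_j$ with $j\neq k$, coincides with $\mu\circ\pi_j^{-1}$.

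The forward direction is essentially bookkeeping. For $d=2$, the definition of slice singular in any variable order forces $\tilde{\mu}_1$ and $\tilde{\mu}_2$ to be singular, and these are exactly $\mu\circ\pi_2^{-1}$ and $\mu\circ\pi_1^{-1}$. For $d\geq 3$, the definition makes each $\tilde{\mu}_k$ slice singular in any variable order, so by the inductive hypothesis every one-dimensional marginal of each $\tilde{\mu}_k$ is singular; the observation above then shows every $\mu\circ\pi_j^{-1}$ is singular.

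The reverse direction is where the real content lies. The base case $d=2$ is the crux: assuming $\mu\circ\pi_1^{-1}$ and $\mu\circ\pi_2^{-1}$ are both singular, I must upgrade this to the full "slice singular in any variable order" property, which for $d=2$ demands both marginal singularity (immediate, as just noted) and $\mu_1$-almost-everywhere singularity of each family of one-dimensional slice measures. For the slice measures $\{\gamma^{x_1}\}$ in the $x_2$ direction, I would pick a Lebesgue-full Borel set $A\subseteq[0,1)$ with $(\mu\circ\pi_2^{-1})(A)=0$. Then by the Rokhlin disintegration identity, $0=\mu([0,1)\times A)=\int \gamma^{x_1}(A)\,d\mu_1(x_1)$, forcing $\gamma^{x_1}(A)=0$ for $\mu_1$-almost every $x_1$; since $A^c$ has Lebesgue measure zero, each such $\gamma^{x_1}$ is singular. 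The same argument with the roles of $x_1$ and $x_2$ swapped handles the slice measures in the other direction.

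For $d\geq 3$, I would invoke Lemma \ref{d-1dimmarginal} to reduce to showing that each $(d-1)$-dimensional marginal $\tilde{\mu}_k$ is slice singular in any variable order. By the inductive hypothesis (applied at dimension $d-1$), this is equivalent to showing every one-dimensional marginal of $\tilde{\mu}_k$ is singular, which by the key observation is exactly the hypothesis that $\mu\circ\pi_j^{-1}$ is singular for $j\neq k$. The main obstacle is really the $d=2$ base case of the reverse direction, where one must convert singularity of a one-dimensional marginal into pointwise singularity of slice measures; once that disintegration argument is in hand, the rest is a clean induction via Lemma \ref{d-1dimmarginal}.
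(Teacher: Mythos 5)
Your proposal is correct and follows essentially the same route as the paper: the forward direction via the marginal-of-a-marginal observation (the content of Lemma \ref{onedimmarginal}), and the reverse direction by induction on $d$ with the inductive step reduced to Lemma \ref{d-1dimmarginal}. Your direct disintegration argument for the $d=2$ base case ($\gamma^{x_1}(A)=0$ for $\mu_1$-a.e.\ $x_1$, hence $\gamma^{x_1}$ is concentrated on the Lebesgue-null set $A^c$) is exactly the content the paper invokes from the proof of Lemma \ref{d-1dimmarginal}, merely phrased without contradiction.
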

\begin{proof}
The direction assuming that $\mu$ is slice singular in any variable order is proven by Lemma \ref{onedimmarginal}.

Now assume that for all $1\leq k\leq d$, $\mu \circ \pi_{k}^{-1}$ is a singular measure on $[0,1)$. We proceed by induction on $d$.

The case when $d=2$ follows from the proof of Lemma \ref{d-1dimmarginal}.

For the case for $d>2$, suppose the claim follows for $d-1$ dimensions. We will show that every marginal measure in $d-1$ dimensions is slice singular in any variable order so that the result will follow from Lemma \ref{d-1dimmarginal}. Take any marginal measure of $\mu$ on $d-1$ dimensions, call it $v$ and let $1\leq k\leq d-1$. Consider 
$$v\circ \pi_{k}^{-1}=\mu \circ \tilde{\pi}_{j}^{-1}\circ \pi_{k}^{-1}$$
where $\tilde{\pi}_{j}: [0,1)^{d}\to [0,1)^{d-1}$ and $\tilde{\pi}_{j}$ is the projection that excludes the coordinate $j$. Then $v\circ \pi_{k}^{-1}=\mu \circ \pi_{m}^{-1}$ where $m=k+1$ if $j\leq k$ and $m=k$ if $j>k$, which is a singular measure by assumption. Then by induction assumption $v$ is slice singular in any variable order. Since $v$ was an arbitrary marginal measure of $\mu$ on $d-1$ dimensions, the result follows.
\end{proof}

Now to see how to use this criteria for measures generated by an IFS, we establish some notation and a Lemma that follows easily from the Hutchinson theorem. For details see \cite[Lemma 1]{Herr2022Fourier}.

\begin{defn}
For a map $\phi: \mathbb{R}^{d}\to \mathbb{R}^{d}$, we say $\phi$ is \textbf{Cartesian} if there exists mappings $\phi_{1},\phi_{2},\dots, \phi_{d}: \mathbb{R}\to \mathbb{R}$ such that
$$\phi(x_{1},\dots,x_{d})=(\phi_{1}(x_{1}),\dots, \phi_{d}(x_{d})).$$

For a finite set of Cartesian maps on $[0,1)^{d}$, $\phi=\{\phi_{1},\dots, \phi_{N}\}$, define a $d\times N$ matrix of functions by $$\tilde{\phi}=[\phi_{kj}]$$ where $\phi_{kj}\circ \pi_{k}=\pi_{k}\circ \phi_{j} $.
\end{defn}

\begin{lem*}[Herr, Jorgensen, and Weber]
Let $\phi=\{\phi_{1},\dots, \phi_{N}\}$ be a finite set of Cartesian maps that are strict contractions on $[0,1)^{d}$ with weights $\{w_{1},\dots ,w_{N}\}$ and invariant measure $\mu$. Then for any $k$, the $kth$ row of $\tilde{\phi}$
is a multi-set of contractions on $[0,1)$ with weights $\{w_{1},\dots ,w_{N}\}$, and its invariant measure is $\mu \circ \pi_{k}^{-1}$.
\end{lem*}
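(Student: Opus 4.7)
The plan is to establish the Lemma by splitting it into its two assertions: (a) each entry of the $k$-th row of $\tilde{\phi}$ is a strict contraction on $[0,1)$, and (b) the measure $\mu \circ \pi_{k}^{-1}$ is the (unique) invariant measure of the $k$-th row IFS weighted by $\{w_{1},\dots,w_{N}\}$. Since both claims are purely structural consequences of the Cartesian nature of the $\phi_{j}$ together with Hutchinson's theorem, neither step should be hard; the proof is essentially a naturality check between ``apply Hutchinson, then project'' and ``project, then apply Hutchinson''.

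For claim (a), I would equip $[0,1)^{d}$ with the sup metric (equivalent to the Euclidean one for Lipschitz purposes), so that a strict contraction $\phi_{j}$ with contraction constant $r_{j}<1$ satisfies $\max_{i}|\phi_{ij}(x_{i})-\phi_{ij}(y_{i})| \le r_{j}\max_{i}|x_{i}-y_{i}|$. Fixing $y_{i}=x_{i}$ for $i\neq k$ isolates the $k$-th coordinate and yields $|\phi_{kj}(x_{k})-\phi_{kj}(y_{k})| \le r_{j}|x_{k}-y_{k}|$, showing that $\phi_{kj}$ is a strict contraction on $[0,1)$ with constant at most $r_{j}$. Thus the $k$-th row $\{\phi_{k1},\dots,\phi_{kN}\}$ is a multi-set of contractions on $[0,1)$ and Hutchinson's theorem guarantees a unique invariant Borel probability measure for it with the weights $\{w_{1},\dots,w_{N}\}$.

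The substantive step is (b), which I would handle via the uniqueness assertion in Hutchinson's theorem: it suffices to check that $\mu \circ \pi_{k}^{-1}$ satisfies the Hutchinson fixed-point equation for the $k$-th row. Let $g\colon [0,1)\to\mathbb{C}$ be continuous and set $f = g \circ \pi_{k}$, a continuous function on $[0,1)^{d}$. The pushforward identity gives $\int g\, d(\mu\circ\pi_{k}^{-1}) = \int f\, d\mu$. Apply the Hutchinson relation for $\mu$ to $f$, and use the defining compatibility $\pi_{k}\circ\phi_{j} = \phi_{kj}\circ\pi_{k}$ to rewrite each summand:
\[
\int g\, d(\mu\circ\pi_{k}^{-1}) \;=\; \sum_{j=1}^{N}w_{j}\int g\circ\pi_{k}\circ\phi_{j}\, d\mu \;=\; \sum_{j=1}^{N}w_{j}\int g\circ\phi_{kj}\circ\pi_{k}\, d\mu \;=\; \sum_{j=1}^{N}w_{j}\int g\circ\phi_{kj}\, d(\mu\circ\pi_{k}^{-1}).
\]
This is exactly the Hutchinson equation for $\mu\circ\pi_{k}^{-1}$ with respect to the $k$-th row, so by uniqueness $\mu\circ\pi_{k}^{-1}$ must coincide with the invariant measure of that one-dimensional IFS.

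I do not foresee a real obstacle: the Cartesian condition is precisely what ensures that projecting commutes with applying any of the $\phi_{j}$, which is the mechanism that makes the Hutchinson equation descend to each coordinate. The only mild subtlety is the choice of metric underlying ``strict contraction,'' but any product metric behaves the same way and the bound on the coordinate Lipschitz constant is automatic. The result can accordingly be viewed as the statement that, for Cartesian IFSs, the one-dimensional marginals of the invariant measure are themselves invariant measures of the coordinate-wise IFSs, which is exactly the criterion needed to feed into Theorem \ref{identityslicesingular}.
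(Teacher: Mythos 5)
Your proof is correct and is exactly the argument the paper has in mind: the paper does not spell out a proof but states that the Lemma ``follows easily from the Hutchinson theorem'' and defers to Lemma 1 of the cited reference, and your verification that $\mu\circ\pi_k^{-1}$ satisfies the Hutchinson fixed-point equation for the $k$-th row (via $\pi_k\circ\phi_j=\phi_{kj}\circ\pi_k$) followed by an appeal to uniqueness is precisely that easy argument. The coordinate-contraction check in part (a) is also fine with either the sup or Euclidean metric.
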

We also use the following result of Kakutani \cite{kakutani1948equivalence} for understanding when IFS generate measures that are singular.
\begin{thm*}[Kakutani]
Suppose that $\{\phi_{1},\dots ,\phi_{N}\}$ are contractions acting on the metric space $(X,d)$, and let $\mu$ and $\tilde{\mu}$ be the invariant measures arising from these contractions with weights $\{w_{1},\dots ,w_{N}\}$ and $\{\tilde{w}_{1},\dots ,\tilde{w}_{N}\}$ respectively. Then exactly one of the following holds:
\begin{enumerate}
    \item $\mu=\tilde{\mu}$, which is equivalent to $w_{j}=\tilde{w_{j}}$ for all $j$.
    \item $\mu$ and $\tilde{\mu}$ are mutually singular.
\end{enumerate}
\end{thm*}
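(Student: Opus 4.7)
The plan is to realize both $\mu$ and $\tilde\mu$ as pushforwards of Bernoulli product measures on the symbolic space $\Sigma=\{1,\dots,N\}^{\mathbb{N}}$, and then reduce the claim to Kakutani's classical dichotomy for infinite product measures. First I would introduce the address map $\pi:\Sigma\to A$ defined by $\pi(\omega)=\lim_{n\to\infty}\phi_{\omega_1}\circ\cdots\circ\phi_{\omega_n}(x_0)$; strict contractivity guarantees that this limit exists and is independent of the base point $x_0$. Let $\mathbb{P}_{\mathbf w}$ denote the Bernoulli measure on $\Sigma$ with i.i.d.\ marginals $\mathbf w=(w_1,\dots,w_N)$. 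A direct calculation shows that the pushforward $\pi_*\mathbb{P}_{\mathbf w}$ satisfies Hutchinson's functional equation $\int f\,d\nu=\sum_k w_k\int f\circ\phi_k\,d\nu$ for every continuous $f$, so the uniqueness clause of Hutchinson's theorem recalled above forces $\mu=\pi_*\mathbb{P}_{\mathbf w}$ and likewise $\tilde\mu=\pi_*\mathbb{P}_{\tilde{\mathbf w}}$.

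Next I would invoke Kakutani's classical dichotomy for countable products of probability measures. Since each factor is an i.i.d.\ copy of the same finitely supported distribution, the Hellinger affinity criterion collapses to the clean statement that $\mathbb{P}_{\mathbf w}$ and $\mathbb{P}_{\tilde{\mathbf w}}$ are either equal (precisely when $\mathbf w=\tilde{\mathbf w}$) or mutually singular. Concretely, the strong law of large numbers produces the tail event
\[
\Sigma_{\mathbf w}=\bigcap_{k=1}^{N}\Bigl\{\omega:\tfrac{1}{n}\#\{j\leq n:\omega_j=k\}\to w_k\Bigr\},
\]
which is a Borel set of full $\mathbb{P}_{\mathbf w}$-measure and zero $\mathbb{P}_{\tilde{\mathbf w}}$-measure whenever $\mathbf w\neq\tilde{\mathbf w}$. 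If $\mathbf w=\tilde{\mathbf w}$ then $\mu=\tilde\mu$ and conclusion (1) holds; otherwise the task reduces to upgrading the disjoint pair $\Sigma_{\mathbf w},\Sigma_{\tilde{\mathbf w}}\subset\Sigma$ to a disjoint pair of Borel subsets of $A$ carrying full mass for $\mu$ and $\tilde\mu$ respectively.

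The main obstacle is precisely this last transfer step. When the IFS has genuine overlaps the map $\pi$ is not injective, so $\pi^{-1}(\pi(\Sigma_{\mathbf w}))$ may strictly contain $\Sigma_{\mathbf w}$ and the symbolic separation need not push forward directly to $A$. I would handle this by lifting the argument to the natural extension $\Sigma\times A$ equipped with the skew-product shift $T(\omega,x)=(\sigma\omega,\phi_{\omega_1}(x))$, which preserves $\mathbb{P}_{\mathbf w}\otimes\mu$ and $\mathbb{P}_{\tilde{\mathbf w}}\otimes\tilde\mu$ ergodically (ergodicity follows from contractivity of the IFS together with the Bernoulli nature of the base). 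Birkhoff's theorem applied to the digit indicator $(\omega,x)\mapsto\mathbf{1}_{\{\omega_1=k\}}$ produces disjoint invariant sets in $\Sigma\times A$ of full measure for the two systems; projecting to $A$ via the measurable section $\omega\mapsto\pi(\omega)$, which is well defined because the cylinders $\phi_{\omega_1}\circ\cdots\circ\phi_{\omega_n}(A)$ shrink to $\pi(\omega)$, yields the required disjoint Borel witnesses of $\mu\perp\tilde\mu$ and completes the dichotomy.
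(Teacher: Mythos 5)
First, note that the paper does not prove this statement at all: it is quoted as a known result with a citation to Kakutani's 1948 theorem on equivalence of infinite product measures, and your first two steps (the address map $\pi$, the identification $\mu=\pi_*\mathbb{P}_{\mathbf w}$ via uniqueness in Hutchinson's theorem, and the SLLN/Hellinger dichotomy for $\mathbb{P}_{\mathbf w}$ versus $\mathbb{P}_{\tilde{\mathbf w}}$ on $\Sigma$) correctly reconstruct the intended reduction to that citation. The genuine gap is exactly the transfer step you flag, and your proposed repair does not close it. Birkhoff's theorem applied to $(\omega,x)\mapsto \mathbf{1}_{\{\omega_1=k\}}$ produces invariant sets of the form $\Sigma_{\mathbf w}\times X$ and $\Sigma_{\tilde{\mathbf w}}\times X$: their disjointness lives entirely in the symbolic factor. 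Indeed $\mathbb{P}_{\mathbf w}\otimes\mu\perp\mathbb{P}_{\tilde{\mathbf w}}\otimes\tilde\mu$ holds automatically whenever $\mathbf w\neq\tilde{\mathbf w}$, simply because $\mathbb{P}_{\mathbf w}\perp\mathbb{P}_{\tilde{\mathbf w}}$, and this carries no information about the second marginals $\mu,\tilde\mu$. Projecting ``via the section $\omega\mapsto\pi(\omega)$'' just returns the sets $\pi(\Sigma_{\mathbf w})$ and $\pi(\Sigma_{\tilde{\mathbf w}})$, which need not be disjoint when $\pi$ is not injective: a single point of the attractor may admit codings of both frequencies. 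You have restated the obstacle in skew-product language rather than removed it.

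Moreover, the obstacle cannot be removed in the generality in which you (and, literally read, the theorem) work, because without a separation hypothesis the dichotomy is false. For the overlapping system $\phi_0(x)=\lambda x$, $\phi_1(x)=\lambda x+(1-\lambda)$ with $\lambda>1/2$, results of Peres and Solomyak show that for almost every $\lambda$ in a suitable range both the unbiased and a mildly biased Bernoulli convolution are absolutely continuous; an absolutely continuous self-similar measure whose attractor is $[0,1]$ has almost-everywhere positive density (the density satisfies $f=\sum_k(w_k/\lambda)\,f\circ\phi_k^{-1}\mathbf{1}_{\phi_k([0,1])}$, so $\{f>0\}$ contains $\phi_u(\{f>0\})$ for every word $u$ and hence meets every interval in a fixed positive proportion, forcing $|\{f>0\}|=1$), so the two measures are mutually equivalent yet unequal. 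What makes the theorem usable in this paper is that in every application the relevant one-dimensional maps have images partitioning $[0,1)$, so $\pi$ is injective off a countable set $N$ that is null for both measures; under that hypothesis your steps 1--2 already finish the proof, with $\pi(\Sigma_{\mathbf w}\setminus\pi^{-1}(\pi(N)))$ and $\pi(\Sigma_{\tilde{\mathbf w}}\setminus\pi^{-1}(\pi(N)))$ serving as disjoint Borel (by Lusin--Suslin) witnesses of mutual singularity, and no skew product is needed. You should also note that the clause ``$\mu=\tilde\mu$ iff $w_j=\tilde w_j$ for all $j$'' additionally requires the $\phi_j$ to be distinct, a point the paper handles in the subsequent Remark by merging repeated contractions and summing their weights.
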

This gives us our criteria for testing the measure corresponding to an IFS for being slice singular in any variable order, which follows from Theorem \ref{identityslicesingular}.

\begin{prop}\label{identityifs}
Let $\phi=\{\phi_{1},\dots, \phi_{N}\}$ be a finite set of Cartesian maps that are strict contractions on $[0,1)^{d}$ with weights $\{w_{1},\dots ,w_{N}\}$ and invariant measure $\mu$. Then $\mu$ is slice singular in any variable order if and only if for all $k$, Lebesgue measure is not the invariant measure of the contractions given by the $k$th row of $\tilde{\phi}$ with weights $\{w_{1},\dots ,w_{N}\}$.
\end{prop}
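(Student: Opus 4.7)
The plan is to combine three ingredients developed in this subsection: Theorem~\ref{identityslicesingular}, which reduces slice singularity in any variable order to singularity of every one-dimensional projection $\mu \circ \pi_k^{-1}$; the Herr--Jorgensen--Weber Lemma, which identifies $\mu \circ \pi_k^{-1}$ as the invariant measure of the IFS given by the $k$th row of $\tilde{\phi}$ with weights $\{w_1,\dots,w_N\}$; and Kakutani's dichotomy, which says that any two invariant measures of the same IFS arising from different weight vectors are either equal or mutually singular. No new computations should be needed; the proof is essentially a short chain of equivalences.

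First I would observe that, by Theorem~\ref{identityslicesingular}, it suffices to prove, for each fixed $k$, that $\mu \circ \pi_k^{-1}$ is singular with respect to Lebesgue measure on $[0,1)$ if and only if Lebesgue measure is not the invariant measure of the $k$th row IFS with weights $\{w_1,\dots,w_N\}$. Fix such a $k$ and let $\nu_k := \mu \circ \pi_k^{-1}$; by the Herr--Jorgensen--Weber Lemma, $\nu_k$ is the invariant measure of the $k$th row of $\tilde{\phi}$ with those weights. The ``only if'' direction is then essentially trivial: if Lebesgue measure coincides with $\nu_k$, then $\mu \circ \pi_k^{-1}$ equals Lebesgue and is not singular. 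For the converse, I would invoke Kakutani's theorem applied to the $k$th row IFS: if the given weight vector does not produce Lebesgue, then because invariant measures for distinct weight vectors are mutually singular, $\nu_k$ must be mutually singular with Lebesgue, so $\mu \circ \pi_k^{-1}$ is singular as required.

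The one subtlety worth addressing is how Lebesgue measure enters Kakutani's dichotomy, since Kakutani's theorem as stated compares two invariant measures of the same IFS for two weight vectors. The proposition's phrasing handles this implicitly: ``Lebesgue is not the invariant measure of the $k$th row IFS with weights $\{w_1,\dots,w_N\}$'' is precisely the negation of the equality clause in Kakutani's dichotomy applied to the pair $(\text{Lebesgue}, \nu_k)$, so either Lebesgue is already produced by these weights (and the proposition requires us to exclude this case) or the dichotomy forces mutual singularity. I expect this bookkeeping step to be the only mildly delicate point of the argument, and I would state it explicitly before closing the proof.
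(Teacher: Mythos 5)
Your argument is correct and is precisely the chain the paper intends: the paper offers no written proof beyond asserting that the proposition ``follows from Theorem~\ref{identityslicesingular},'' with the Herr--Jorgensen--Weber Lemma identifying $\mu\circ\pi_k^{-1}$ as the invariant measure of the $k$th row and Kakutani's dichotomy supplying the singularity, exactly as you lay out. The one caveat you flag --- that Kakutani's dichotomy only bites once Lebesgue measure is known to be an invariant measure of the $k$th-row system for \emph{some} weight vector --- is the same point the paper defers to the Remark immediately following the proposition, so your treatment is, if anything, slightly more explicit than the original.
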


\begin{rem}
If particular, suppose that for each $k$, the $k$th row of contractions in $\tilde{\phi}$ are reduced to a set of contractions $\{\psi_{k1},\dots ,\psi_{kN'}\}$ by eliminating repetition and the weights $\{w_{k},\dots ,w_{N}\}$ are converted to a weight set $\{w'_{k1},\dots ,w'_{kN'}\}$ by summing the weights of $w_{i}$ corresponding to repeated contractions. Suppose further that the images of $\{\psi_{k1},\dots ,\psi_{kN'}\}$ partition $[0,1)$. Then Lebesgue measure can be the invariant measure of the system with contractions $\{\psi_{k1},\dots ,\psi_{kN'}\}$ with the appropriate weights for all $k$, and if for all $k$, the weights from the system where Lebesgue measure is the invariant measure with contractions $\{\psi_{k1},\dots ,\psi_{kN'}\}$ do not correspond to $\{w'_{k1},\dots ,w'_{kN'}\}$, then $\mu$ is slice singular in any variable order.
\end{rem}

\begin{ex}[Menger Sponge]

Now for a concrete example, for any $1\leq p\leq 3$, let
$\lambda_{0}(x_{p})=\frac{x_{p}}{3}$, $\lambda_{1}(x_{p})=\frac{x_{p}+1}{3}$, and $\lambda_{2}(x_{p})=\frac{x_{p}+2}{3}$.
Consider the invariant measure of the following finite sequence of Cartesian maps that are strict contractions on $[0,1)^{3}$, $$\phi=\{\phi_{j}\}_{j=1}^{20}$$ where the weights are all equal
and the columns of the matrix $\tilde{\phi}$ are $[\lambda_{l}(x_{1}) \ \lambda_{m}(x_{2}) \ \lambda_{n}(x_{3}))]^{T}$ where $0\leq l,m,n\leq 2$, and at most one entry of  $(l,m,n)$ is one.

This generates the three dimensional fractal known as the Menger sponge. Intuitively, the system starts with a $3\times 3\times 3$ cube and removes the center cube from each face as well as the most center cube. This process repeats on each of the remaining cubes. 

Note that under appropriate permutations, all the $kth$ rows of $\tilde{\phi}$ are
\[ \phi_{kj}=\begin{cases} 
      \lambda_{1}(x_{k}) \ j=1,\dots,8 \\
      \lambda_{2}(x_{k}) \ j=9,\dots,12 \\
      \lambda_{3}(x_{k}) \ j=13,\dots,20
   \end{cases}
.\]
Note that Lebesgue measure on $[0,1)$ is the invariant measure of the system with those twenty contractions above provided the weights correspond to varying repetitions of the $\lambda_{j}$. Therefore, by the Kakutani Dichotomy theorem, the marginal measures of the invariant measure $\mu$ for $\phi=\{\phi_{j}\}_{j=1}^{20}$ with equal weights are singular with respect to Lebesgue measure and hence $\mu$ is slice singular in any variable order by Proposition \ref{identityifs}.
\end{ex}

\section{A Normalized Cauchy Transform in higher dimensions} \label{sec:NCT}
Our goal is this section is to define a Normalized Cauchy Transform in two dimensions and discuss some of its properties. In particular, we provide a description of the image of the Normalized Cauchy Transform that we define using our coefficients from Theorem A. We will also discuss a Normalized Cauchy Transform in higher dimensions at the end of this section. 

\subsection{One Variable Normalized Cauchy Transform}
Recall the definition of the Normalized Cauchy Transform on $\mathbb{T}$.

\begin{defn}
Let $\mu$ be a finite Borel measure on $[0,1)$. Recall that the \textbf{Cauchy transform}, $C_{\mu}$, is a map from $L^{1}(\mu)$ to analytic functions on the disk such that
$$[C_{\mu}(f)](w)=\int_{0}^{1}\frac{f(x)d\mu}{1-we^{-2\pi i x}}.$$
Now specifically for $\mu$ that is a singular Borel probability measure on $[0,1)$, let $b(w)$ be the corresponding inner function of $\mu$ via the Herglotz Theorem, and define $\mathcal{H}(b)=[b(w)H^{2}(\mathbb{D})]^{\perp}$.
Define $V_{\mu}: L^{2}(\mu)\to \mathcal{H}(b)$
by \begin{equation}V_{\mu}(f)=\frac{C_{\mu}(f)}{C_{\mu}(1)} = \int_{0}^{1} \dfrac{f(x) d\mu}{(1 - (\cdot)e^{-2\pi i x}) [C_{\mu}(1)](\cdot)},\end{equation}
which is a unitary operator called the \textbf{Normalized Cauchy Transform} \cite{Clark72,Aleks89a,Sarason1994Sub-Hardy}.
\end{defn}

Before we define a Normalized Cauchy Transform in higher dimensions, we draw on some results in \cite{DeBranges1960Some} and \cite{Herr2017Fourier}.
\begin{lem*}[Herr and Weber]
Let $\mu$ be a singular Borel probability measure on $[0,1)$ and $b(w)$ be the corresponding inner function via the Herglotz theorem. Then
\begin{equation}{\label{inner}}
b(w)=1-\frac{1}{[C_{\mu}(1)](w)}.
\end{equation}
\end{lem*}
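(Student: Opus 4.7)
The plan is to obtain the identity by straightforward algebraic manipulation of the Herglotz formula, using the fact that the Cauchy transform of the constant function $1$ is precisely what appears inside the Herglotz integrand after a partial fraction rewrite.

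First, I would start from the Herglotz formula applied to $\mu$, namely
\[
\frac{1+b(w)}{1-b(w)} = \int_{0}^{1} \frac{1+we^{-2\pi i x}}{1-we^{-2\pi i x}} \, d\mu(x),
\]
and apply the elementary identity
\[
\frac{1+z}{1-z} = \frac{2}{1-z} - 1
\]
with $z = we^{-2\pi i x}$. Since $\mu$ is a probability measure, integrating termwise gives
\[
\frac{1+b(w)}{1-b(w)} = 2 \int_{0}^{1} \frac{d\mu(x)}{1-we^{-2\pi i x}} - 1 = 2\,[C_{\mu}(1)](w) - 1.
\]

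Next, I would solve this linear equation for $b(w)$. Multiplying through by $1 - b(w)$ and collecting the $b(w)$ terms yields
\[
1 + b(w) = \bigl(2[C_{\mu}(1)](w) - 1\bigr)\bigl(1 - b(w)\bigr),
\]
which after simplification becomes $2[C_{\mu}(1)](w)\, b(w) = 2[C_{\mu}(1)](w) - 2$, i.e.\
\[
b(w) = 1 - \frac{1}{[C_{\mu}(1)](w)},
\]
as claimed.

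The only thing that needs a brief justification is that the division by $[C_{\mu}(1)](w)$ makes sense on $\mathbb{D}$; this is the only potential obstacle, and it is minor. For every $w \in \mathbb{D}$, the kernel $\frac{1+we^{-2\pi i x}}{1-we^{-2\pi i x}}$ has strictly positive real part, so its integral against the probability measure $\mu$ has strictly positive real part, and by the rewrite above the same holds for $2[C_{\mu}(1)](w) - 1$. In particular $[C_{\mu}(1)](w)$ is nonzero throughout $\mathbb{D}$, so the reciprocal is a well-defined analytic function on $\mathbb{D}$ and the displayed formula for $b$ is valid pointwise on the disk.
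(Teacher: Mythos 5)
Your argument is correct: the partial-fraction rewrite $\frac{1+z}{1-z}=\frac{2}{1-z}-1$ turns the Herglotz identity into $\frac{1+b(w)}{1-b(w)}=2[C_{\mu}(1)](w)-1$, the algebra solving for $b(w)$ is right, and your justification that $[C_{\mu}(1)](w)\neq 0$ on $\mathbb{D}$ (positive real part of the Herglotz kernel, hence $\operatorname{Re}[C_{\mu}(1)](w)>\tfrac12$) is exactly what is needed to make the division legitimate. The paper states this lemma without proof, quoting it from earlier work of Herr and Weber, and your derivation is the standard one that appears there; nothing is missing.
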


\begin{thm*}[Herr and Weber]
Let $\mu$ be a singular Borel probability measure on $[0,1)$ with auxiliary sequence $\{g_{n}\}_{n=0}^{\infty}$ generated from $\{e^{2\pi i nx}\}_{n=0}^{\infty}$ in $L^{2}(\mu)$.
Then \begin{equation}{\label{NCT}}[V_{\mu}(f)](w)=\sum_{n=0}^{\infty}\langle f,g_{n}\rangle w^{n}.\end{equation}
\end{thm*}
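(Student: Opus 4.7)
The plan is to verify the identity via the defining relation $V_{\mu}(f)\cdot C_{\mu}(1) = C_{\mu}(f)$ by matching Taylor coefficients at $0$. First, I would expand the Cauchy kernel as a geometric series: for $w\in\mathbb{D}$, $(1-we^{-2\pi ix})^{-1} = \sum_{n=0}^{\infty} w^{n}e^{-2\pi inx}$, with uniform convergence in $x\in[0,1)$. Integrating termwise against $f\,d\mu$ and $d\mu$ respectively yields
$$[C_{\mu}(f)](w) = \sum_{n=0}^{\infty} \langle f, e_{n}\rangle_{\mu}\, w^{n}, \qquad [C_{\mu}(1)](w) = \sum_{n=0}^{\infty} \widehat{\mu}(n)\, w^{n},$$
where $e_{n}(x) := e^{2\pi inx}$ and $\widehat{\mu}(n) := \int e^{-2\pi inx}\,d\mu$; note in particular $\widehat{\mu}(n-k) = \langle e_{k}, e_{n}\rangle_{\mu}$.

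Next, I would set $F(w) := \sum_{n=0}^{\infty} \langle f, g_{n}\rangle_{\mu}\, w^{n}$. Because $\{g_{n}\}$ is a Parseval frame for $L^{2}(\mu)$ (the one-dimensional effectiveness result recorded in Definition \ref{D:aux} and the discussion following it), $\sum_{n}|\langle f, g_{n}\rangle|^{2} = \|f\|_{\mu}^{2} < \infty$, so $F\in H^{2}(\mathbb{D})$ and the power series converges absolutely on $\mathbb{D}$.

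The key step is computing the Cauchy product $F(w)\cdot[C_{\mu}(1)](w)$: its $n$-th coefficient is
$$\sum_{k=0}^{n} \langle f, g_{k}\rangle_{\mu}\,\widehat{\mu}(n-k) \;=\; \sum_{k=0}^{n} \langle e_{k}, e_{n}\rangle_{\mu}\,\langle f, g_{k}\rangle_{\mu}.$$
Taking the inner product of $f$ with both sides of the rearranged recursion $e_{n} = g_{n} + \sum_{k=0}^{n-1} \langle e_{n}, e_{k}\rangle_{\mu}\, g_{k}$ and using sesquilinearity gives
$$\langle f, e_{n}\rangle_{\mu} \;=\; \langle f, g_{n}\rangle_{\mu} + \sum_{k=0}^{n-1} \overline{\langle e_{n}, e_{k}\rangle_{\mu}}\,\langle f, g_{k}\rangle_{\mu} \;=\; \sum_{k=0}^{n} \langle e_{k}, e_{n}\rangle_{\mu}\,\langle f, g_{k}\rangle_{\mu},$$
which matches the $n$-th coefficient of $C_{\mu}(f)$ computed above. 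Hence $F(w)\cdot[C_{\mu}(1)](w) = [C_{\mu}(f)](w)$ on $\mathbb{D}$.

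Finally, by the preceding Herr--Weber lemma $[C_{\mu}(1)](w) = 1/(1-b(w))$ and, since $|b(w)|\le|w|<1$ on $\mathbb{D}$ (de Branges' bound), $C_{\mu}(1)$ has no zero on $\mathbb{D}$; dividing yields $V_{\mu}(f)(w) = F(w) = \sum_{n=0}^{\infty} \langle f, g_{n}\rangle_{\mu}\, w^{n}$, as desired. The main obstacle is purely bookkeeping---keeping the conjugation conventions straight and aligning $\widehat{\mu}(n-k)$ with $\langle e_{k}, e_{n}\rangle_{\mu}$---while the termwise integration and Cauchy product manipulations are justified by the absolute/uniform convergence inside $\mathbb{D}$.
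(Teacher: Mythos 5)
Your proof is correct. The paper states this result without proof (it is imported from the cited Herr--Weber paper), and your argument---matching Taylor coefficients of $C_{\mu}(f)$ against the Cauchy product of $\sum_{n}\langle f,g_{n}\rangle w^{n}$ with $C_{\mu}(1)$ via the rearranged Kaczmarz recursion $\langle f,e_{n}\rangle=\sum_{k=0}^{n}\langle e_{k},e_{n}\rangle\langle f,g_{k}\rangle$---is essentially the standard one; the same key identity $\sum_{j=0}^{n}\overline{\widehat{\mu}(n-j)}\,g_{j}=e_{n}$ reappears in this paper's proof of Proposition \ref{auxformula}.
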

Therefore, the Normalized Cauchy Transform may be defined in terms of the auxiliary sequence $\{g_{n}\}$. The following is an easy consequence of equation (\ref{inner}).
\begin{cor}
For $\mu$ and $b(w)$ defined in the last Theorem,
\begin{equation}{\label {ifandnct}}V_{\mu}(e^{-2\pi i x})=\frac{b(w)}{w}.\end{equation}
Furthermore,
for any $f\in L^{2}(\mu)$,
\begin{equation}{\label{backward}}\frac{V_{\mu}(f)-V_{\mu}(f)(0)}{w}=V_{\mu}(e^{-2\pi i x}(f-\langle f,1\rangle)).\end{equation}
\end{cor}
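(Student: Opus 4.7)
Both identities will fall out of a single general computation of $V_{\mu}(e^{-2\pi i x} h)$ for an arbitrary $h \in L^{2}(\mu)$; one then specializes $h = 1$ for the first identity and $h = f - \langle f, 1\rangle$ for the second. The engine is the elementary algebraic identity
\[
\frac{e^{-2\pi i x}}{1 - w e^{-2\pi i x}} = \frac{1}{w}\left(\frac{1}{1 - w e^{-2\pi i x}} - 1\right), \qquad w \neq 0,
\]
which, after integrating against $h \, d\mu$ and dividing by $C_{\mu}(1)(w)$, yields the master formula
\[
V_{\mu}(e^{-2\pi i x} h)(w) = \frac{1}{w}\left(V_{\mu}(h)(w) - \frac{\langle h, 1\rangle}{C_{\mu}(1)(w)}\right).
\]
Using the Herr--Weber formula \eqref{inner} to rewrite $1/C_{\mu}(1)(w) = 1 - b(w)$ puts this in a form suitable for both specializations.

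For \eqref{ifandnct}, substitute $h = 1$. Since $V_{\mu}(1) = C_{\mu}(1)/C_{\mu}(1) \equiv 1$ and $\langle 1, 1\rangle = 1$, the master formula collapses to $V_{\mu}(e^{-2\pi i x})(w) = w^{-1}\bigl(1 - (1 - b(w))\bigr) = b(w)/w$; this is genuinely analytic at $w = 0$ because $b(0) = 0$ by the Herglotz normalization. For \eqref{backward}, substitute $h = f - \langle f, 1\rangle$, which satisfies $\langle h, 1\rangle = 0$, killing the second term in the master formula. Linearity of $V_{\mu}$ together with $V_{\mu}(1) = 1$ gives $V_{\mu}(h) = V_{\mu}(f) - \langle f, 1\rangle$. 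Finally, observe that $V_{\mu}(f)(0) = \langle f, 1\rangle$, either by evaluating the Cauchy integral at $w = 0$ directly, or by noting that the constant term of the power series \eqref{NCT} is $\langle f, g_{0}\rangle = \langle f, 1\rangle$ since $g_{0} = 1$. Combining these three facts reduces the master formula to \eqref{backward}.

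I do not foresee a serious obstacle; the proof is essentially a one-line algebraic manipulation followed by two clean specializations. The only points worth a brief sentence of care are (a) that the algebraic splitting of the kernel may be integrated termwise against $h\,d\mu$ (uniform convergence on compact subsets of $\mathbb{D}$ suffices), and (b) that in each application the factor $1/w$ is absorbed by the vanishing of the bracketed quantity at $w = 0$, so the resulting expressions are genuinely holomorphic across the origin.
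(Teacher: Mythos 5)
Your proof is correct and follows exactly the route the paper intends: the corollary is stated there without proof as "an easy consequence of equation (\ref{inner})," and your master formula $V_{\mu}(e^{-2\pi i x}h)(w)=w^{-1}\bigl(V_{\mu}(h)(w)-\langle h,1\rangle/C_{\mu}(1)(w)\bigr)$, combined with $1/C_{\mu}(1)=1-b$ and $V_{\mu}(f)(0)=\langle f,1\rangle$, is precisely that consequence. No gaps.
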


The following is compiled from Lemma 4,11, and 12 in \cite{DeBranges1960Some}, and it ties model spaces with the vector version of the Herglotz theorem:
\begin{thm*}[De-Branges]
Let $C$ be a Hilbert space, let $B(z)$ be an analytic operator acting as an isometry on $C(z)$ by power series multiplication such that $B(0)=0$, and let 
$$\mathcal{H}(B)=[B(z)C(z)]^{\perp}.$$

Then $U:\mathcal{H}(B)\to \mathcal{H}(B)$ where
$$U^{*}(f(z))=\frac{f(z)-f(0)}{z}+\frac{B(z)}{z}f(0)$$
is a unitary such that the equation (\ref{DL}) holds. Also,
$$\mathcal{H}(B)=\overline{span\{U^{-n}(c): c\in C, n\in \mathbb{N}\}}.$$
\end{thm*}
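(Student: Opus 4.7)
My plan is to work directly with the explicit formula for $U^*$ and to exploit that $\tilde B(z):=B(z)/z$ is again an isometric multiplier on $C(z)$ (since $M_z$ is isometric and $B = z\tilde B$). First I would verify $C\subseteq \mathcal H(B)$, which is immediate from $B(0)=0$, and decompose $U^* f = M_z^* f + \tilde B(z) f(0)$. The invariance $U^*\mathcal H(B)\subseteq \mathcal H(B)$ then reduces to two pairings against $Bg$: the first yields $\langle f, B(M_zg)\rangle = 0$ by commutativity of $M_z$ and $B$, and the second yields $\langle \tilde B f(0), \tilde B(zg)\rangle = \langle f(0), zg\rangle = 0$ because the two factors live in orthogonal $z$-degrees. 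The isometric identity $\|U^* f\|^2 = \|f\|^2$ follows from $\|M_z^* f\|^2 = \|f\|^2 - \|f(0)\|^2$, $\|\tilde B f(0)\|^2 = \|f(0)\|^2$, and the vanishing cross term $\langle M_z^* f, \tilde B f(0)\rangle = \langle f, B f(0)\rangle = 0$.

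Next I would prove surjectivity of $U^*$ and the identity \eqref{DL} together by analyzing the resolvent $(I - wU^*)^{-1}$. Solving $U^* f = h$ for $h\in \mathcal H(B)$ gives $f = zh + (I - B(z))c$ with $c = f(0)$, and the condition $f\in\mathcal H(B)$ reduces to $\langle c, g_0\rangle = \langle h, \tilde B g\rangle$ for every $g\in C(z)$. The crucial fact is that for $h\in\mathcal H(B)$ the right side depends only on $g_0 = g(0)$, because the higher-degree contributions collapse to $\langle h, B(z^{k-1}g_k)\rangle = 0$. Thus $c = \sum_m B_{m+1}^* h_m$ uniquely determines $f$, giving the inverse $U = (U^*)^{-1}$. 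To verify \eqref{DL}, I would solve $(I - wU^*)g(w) = a$ for $a\in C$; multiplying through by $z$ yields $(z - w)g(w)(z) = za - w(I - B(z))g(w)(0)$, and regularity at $z = w$ forces $g(w)(0) = (I - B(w))^{-1}a$. Pairing with $c\in C$ then gives
\[
\langle (I - wU^*)^{-1}a, c\rangle = \langle (I - B(w))^{-1}a, c\rangle,
\]
from which \eqref{DL} follows via $\tfrac{1+x}{1-x} = -1 + 2(1-x)^{-1}$.

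For the cyclicity claim, suppose $f\in\mathcal H(B)$ is orthogonal to every $(U^*)^n c$, equivalently $\langle f, (I - \bar wU^*)^{-1} c\rangle = 0$ for all $c\in C$ and $w\in\mathbb D$. Inserting the closed-form expression for $(I - \bar wU^*)^{-1}c$ derived above and expanding the resulting boundary integrals as Fourier/Cauchy-kernel pairings on $\mathbb T$, the orthogonality condition collapses to
\[
\langle f(0), c\rangle + \sum_{l\geq 1} w^l \langle P_l(f), (I - B(\bar w))^{-1}c\rangle = 0, \qquad P_l(f) := \sum_m B_{m+l}^* f_m.
\]
Invertibility of $I - B(\bar w)^*$ and matching powers of $w$ then force $f(0) = 0$ and $P_l(f) = 0$ for every $l\geq 1$; combined with the Fourier conditions arising from $f\in\mathcal H(B)$, these translate into $B(z)^* f(z) = 0$ a.e.\ on $\mathbb T$. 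The main obstacle I anticipate is this last step: concluding $f = 0$ requires that $B(z)$ have a.e.\ unitary boundary values, so that $B(z)^*$ is injective a.e., which is the structural content of the isometric-analytic-multiplier hypothesis in the de Branges framework. I would invoke this fact, together with the Beurling--Lax structure for isometric multipliers on $H^2(C)$, to finish the argument.
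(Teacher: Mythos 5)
First, a point of reference: the paper does not actually prove this statement --- it is quoted as a compilation of Lemmas 4, 11 and 12 of de Branges' paper --- so your attempt is a from-scratch reconstruction rather than a parallel to an in-text argument. Your first two parts are essentially correct and complete: writing $U^*f = M_z^*f + \tilde B(z)f(0)$ with $\tilde B(z) = B(z)/z$ (itself an isometric multiplier, since $B = M_z\tilde B$ with $M_z$ isometric), checking invariance of $\mathcal H(B)$ and the isometric identity via the vanishing cross term $\langle f, Bf(0)\rangle = 0$, solving $U^*f = h$ through $f = zh + (I-B(z))c$ with the unique admissible $c = \sum_m B_{m+1}^*h_m$, and extracting \eqref{DL} from $(z-w)g(w)(z) = za - w(I-B(z))g(w)(0)$ evaluated at $z=w$ --- all of this is sound and arguably more transparent than the original source.

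The gap is exactly where you flagged it, but it is worse than you suggest. An analytic operator acting isometrically on $C(z)$ by multiplication has a.e.\ \emph{isometric} boundary values, not unitary ones, and when $\dim C = \infty$ these are genuinely different; your final step needs $B(e^{i\theta})^*$ to be injective a.e., i.e.\ $B$ to be two-sided inner, and this does \emph{not} follow from the stated hypotheses. In fact the cyclicity claim with $U^{-n} = (U^*)^n$ fails at this level of generality: take $C=\ell^2$, $S$ the unilateral shift, and $B(z) = zS$. Then $B$ is an isometric multiplier with $B(0)=0$, one computes $\mathcal H(B) = C \oplus \bigoplus_{n\ge 1}(\ker S^*)z^n$, and $U^*c = Sc \in C$ for every $c\in C$, so $\overline{\mathrm{span}}\{(U^*)^n c\} = C \subsetneq \mathcal H(B)$ even though $U^*$ is still unitary on $\mathcal H(B)$ (as your parts 1--2 correctly show). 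So no argument can close your part 3 without an additional hypothesis: either assume $B$ has a.e.\ unitary boundary values (equivalently, that it arises from a unitary via \eqref{DL}, which is how de Branges uses it and which does hold in this paper's application, where $B(w)$ acts fiberwise by scalar inner functions $b^{x_1,\dots,x_{d-1}}(w)$), or else replace the spanning set by $\{U^{n}c\}$ with positive powers, for which the proof is immediate --- $\langle f, U^n c\rangle = \langle ((U^*)^n f)(0), c\rangle$, and an induction on the Taylor coefficients of $f$ forces $f=0$.
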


\subsection{A Two Variable Normalized Cauchy Transform}
Let $\mu$ be an $x_{2}$ slice singular Borel probability measure on $[0,1)^{2}$ with slice measures $\{\gamma^{x_{1}}\}$ and marginal measure $\mu_{1}$ with corresponding auxiliary sequences $\{g_{n_{2}}^{x_{1}}\}$ and $\{g_{n_{1}}\}$. 
\begin{defn}
Let $\mathcal{V}_{\mu}:L^{2}(\mu)\to  H^{2}(\mathbb{D}^2)$
where $$\mathcal{V}_{\mu}(f)=\sum_{n_{2}=0}^{\infty}\sum_{n_{1}=0}^{\infty}\langle f(x_{1},x_{2}),g_{n_{2}}^{x_{1}}g_{n_{1}}\rangle_{\mu}z_{1}^{n_{1}} z_{2}^{n_{2}}.$$ We call $\mathcal{V}_{\mu}$ a \textbf{Normalized Cauchy Transform} for $\mu$.

Note this map is defined in terms of the sequence $\{g_{n_{2}}^{x_{1}}g_{n_{1}}\}$ just as the one dimensional Normalized Cauchy Transform can be defined.  Furthermore, as a result of Theorem $A$, $\mathcal{V}_{\mu}$ is a well-defined isometry.

Now if $\mu$ is slice singular in both directions, we have two Normalized Cauchy Transforms. If $\mu$ is $x_{1}$ slice singular with auxiliary sequences $\{h_{n_{2}}\}$ and $\{h_{n_{1}}^{x_{2}}\}$, we can define $\mathcal{V}_{\mu}:L^{2}(\mu)\to  H^{2}(\mathbb{D}^2)$
where $$\mathcal{V}_{\mu}(f)=\sum_{n_{1}=0}^{\infty}\sum_{n_{2}=0}^{\infty}\langle f(x_{1},x_{2}),h_{n_{1}}^{x_{2}}h_{n_{2}}\rangle_{\mu}z_{2}^{n_{2}} z_{1}^{n_{1}}.$$

We will denote $\mathcal{V}_{\mu}^{1}$ and $\mathcal{V}_{\mu}^{2}$ as two separate Normalized Cauchy Transforms defined in terms of different disintegrations when necessary for future discussion. We will actually show in this section that $\mathcal{V}_{\mu}^{1}=\mathcal{V}_{\mu}^{2}$ if and only if $\mu$ is a product measure.
\end{defn}

We can define a natural mapping that depends on the measurable field of measures $\{ \gamma^{x_1} \}$ that is also defined by the auxiliary sequence.
\begin{defn}
Let $\mathcal{V}_{\{\gamma^{x_{1}}\}}: L^{2}(\mu)\to L^{2}(\mu_{1})(z_{2})$ where
$$[\mathcal{V}_{\{\gamma^{x_{1}}\}}(f)](x_{1},z_{2})=\sum_{n_{2}=0}^{\infty}\langle f(x_{1},x_{2}), g_{n_{2}}^{x_{1}}(x_{2})\rangle_{\gamma^{x_{1}}}z_{2}^{n_{2}}.$$
Notice that $\mathcal{V}_{\{\gamma^{x_{1}}\}}$ is well defined isometry by equation (\ref{pf}).

Now define
$\widetilde{\mathcal{V}}_{\mu_{1}}: L^{2}(\mu_{1})(z_{2})\to \mathcal{H}(b(z_{1}))(z_{2})$
where $$[\widetilde{\mathcal{V}}_{\mu_{1}}(\sum_{n_{2}=0}^{\infty}a_{n_{2}}(x_{1})z_{2}^{n_{2}})](z_{2})=
\sum_{n_{2}=0}^{\infty}[\sum_{n_{1}=0}^{\infty}\langle a_{n_{2}},g_{n_{1}}\rangle_{\mu_{1}}z_{1}^{n_{1}}]z_{2}^{n_{2}}=\sum_{n_{2}=0}^{\infty}V_{\mu_{1}}(a_{n_{2}})z_{2}^{n_{2}}$$
where $b(z_{1})$ is the inner function corresponding to $\mu_{1}$.
Now because $V_{\mu_{1}}$ is a unitary onto $\mathcal{H}(b)$,  $\widetilde{\mathcal{V}}_{\mu_{1}}$ is a well defined unitary operator.
\end{defn}

The Normalized Cauchy Transform in the one dimensional case is normally discussed in an integral form. Because of this, one can use the Rokhlin disintegration to prove the following Lemma:
\begin{lem}
$\mathcal{V}_{\mu}$ is an isometry, and for each $z_{1},z_{2}\in \mathbb{D}$ and $f\in L^{2}(\mu)$,
    $$[\mathcal{V}_{\mu}(f)](z_{1},z_{2})=[\widetilde{\mathcal{V}}_{\mu_{1}}\circ \mathcal{V}_{\{\gamma^{x_{1}}\}}(f)](z_{1},z_{2})=
\int_{0}^{1}\int_{0}^{1}\frac{fd\gamma^{x_{1}}d\mu_{1}}{(1-z_{2}e^{-2\pi i x_{2}})(1-z_{1}e^{-2\pi i x_{1}})[C_{\gamma^{x_{1}}}(1)](z_2)[C_{\mu_{1}}(1)](z_1)}.$$
\end{lem}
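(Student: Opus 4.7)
The plan is to observe that the isometry of $\mathcal{V}_{\mu}$ is already a direct consequence of Theorem A together with the Proposition that $\{g_{n_2}^{x_1} g_{n_1}\}$ is a Parseval frame in $L^2(\mu)$, so the heart of the lemma is the double equality relating $\mathcal{V}_\mu$, the composition $\widetilde{\mathcal{V}}_{\mu_1}\circ \mathcal{V}_{\{\gamma^{x_1}\}}$, and the double integral formula. I will verify the factorization identity first, and then pass to the integral form by invoking the integral representation of the one-variable Normalized Cauchy Transform coefficient-wise.

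For the factorization, I would apply $\mathcal{V}_{\{\gamma^{x_1}\}}$ to $f$ to obtain $\sum_{n_2=0}^{\infty} a_{n_2}(x_1)\, z_2^{n_2}$ where $a_{n_2}(x_1) := \langle f(x_1,\cdot), g_{n_2}^{x_1}\rangle_{\gamma^{x_1}}$. Next, $\widetilde{\mathcal{V}}_{\mu_1}$ acts on power series in $z_2$ by applying $V_{\mu_1}$ to each coefficient, yielding
\[
\sum_{n_2=0}^{\infty} \sum_{n_1=0}^{\infty} \langle a_{n_2}, g_{n_1}\rangle_{\mu_1}\, z_1^{n_1} z_2^{n_2}.
\]
The Rokhlin Disintegration Theorem, applied to the integrand $\overline{g_{n_2}^{x_1}(x_2)\,g_{n_1}(x_1)}\,f(x_1,x_2)$, gives $\langle a_{n_2}, g_{n_1}\rangle_{\mu_1} = \langle f, g_{n_2}^{x_1} g_{n_1}\rangle_{\mu}$, which is exactly the coefficient in the definition of $\mathcal{V}_\mu(f)$. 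This establishes $\mathcal{V}_\mu = \widetilde{\mathcal{V}}_{\mu_1}\circ \mathcal{V}_{\{\gamma^{x_1}\}}$.

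For the integral form, I would apply the one-variable Herr--Weber identity \eqref{NCT} in its integral guise, first to the inner transform $V_{\gamma^{x_1}}$ (for $\mu_1$-a.e.\ $x_1$, since the family of slice measures is singular there) to recognize $[\mathcal{V}_{\{\gamma^{x_1}\}}(f)](x_1,z_2) = [V_{\gamma^{x_1}}(f(x_1,\cdot))](z_2)$ as $\int_0^1 f\, d\gamma^{x_1} / ((1-z_2 e^{-2\pi i x_2})[C_{\gamma^{x_1}}(1)](z_2))$. Then for each $n_2$ I would use the same identity for the outer transform $V_{\mu_1}$ applied to $a_{n_2}$, obtaining $[V_{\mu_1}(a_{n_2})](z_1) = \int_0^1 a_{n_2}\,d\mu_1/((1-z_1 e^{-2\pi i x_1})[C_{\mu_1}(1)](z_1))$. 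Multiplying by $z_2^{n_2}$, summing over $n_2$, and swapping the sum with the $d\mu_1$ integral reconstructs $[\mathcal{V}_{\{\gamma^{x_1}\}}(f)](x_1,z_2)$ inside the integral and produces the desired double integral.

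The main technical obstacle is justifying the interchange of the $n_2$ sum with the integral over $\mu_1$ (and verifying measurability of $x_1\mapsto [V_{\gamma^{x_1}}(f(x_1,\cdot))](z_2)$). Since $z_1,z_2$ are fixed in $\mathbb{D}$, the weight $1/((1-z_1 e^{-2\pi i x_1})[C_{\mu_1}(1)](z_1))$ is bounded in $x_1$, while $\sum_{n_2} \|a_{n_2}\|_{\mu_1}^2 = \|\mathcal{V}_{\{\gamma^{x_1}\}}(f)\|^2 < \infty$ and $|z_2|^{n_2}$ decays geometrically; Cauchy--Schwarz then produces an absolutely summable/integrable majorant, enabling Fubini. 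Nonvanishing of $[C_{\gamma^{x_1}}(1)](z_2)$ and $[C_{\mu_1}(1)](z_1)$ on $\mathbb{D}$ follows from the Herr--Weber lemma \eqref{inner} combined with $\|b\|_\infty \leq 1$, so the divisions in the integral form are legitimate. With these justifications in hand, the chain of equalities closes and the lemma follows.
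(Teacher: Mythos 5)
Your proposal is correct and follows exactly the route the paper intends: the paper states this lemma without a written proof, offering only the remark that it follows from the integral form of the one-variable Normalized Cauchy Transform together with the Rokhlin disintegration, and your argument (coefficient-wise application of $V_{\mu_{1}}$, the identity $\langle a_{n_{2}},g_{n_{1}}\rangle_{\mu_{1}}=\langle f,g_{n_{2}}^{x_{1}}g_{n_{1}}\rangle_{\mu}$ via disintegration, and the Herr--Weber integral representation applied to $\gamma^{x_{1}}$ and $\mu_{1}$) is precisely that, with the Fubini and nonvanishing justifications supplied in appropriate detail. No gaps.
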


\subsection{Images of Normalized Cauchy Transforms}
Now we characterize the image of our two-dimensional Normalized Cauchy Transform.
\begin{defn}
Let 
$$B(z_{2})=\sum_{n_{2}=0}^{\infty}B_{n_{2}}z_{2}^{n_{2}}$$ be the analytic operator acting isometrically on $L^{2}(\mu_{1})(z_{2})$ as in the proof of Theorem \ref{Th:main1}.
Now we define
$$\widetilde{B}(z_{2})=: \sum_{n_{2}=0}^{\infty}V_{\mu_{1}}B_{n_{2}}V_{\mu_{1}}^{*}z_{2}^{n_{2}}$$
is an analytic operator acting on $\mathcal{H}(b(z_{1}))(z_{2})$ by power series multiplication.
We have that $\widetilde{B}(z_{2})$ acts as an isometry.
\end{defn}
Applying the Theorem of de Branges on Hilbert space $\mathcal{H}(b)$ with analytic operator $\widetilde{B}(z_{2})$ acting on $\mathcal{H}(b(z_{1}))(z_{2})$, we have unitary $U$ acting on 
$\mathcal{H}(\widetilde{B})=\overline{span}\{U^{-n}(c): c\in \mathcal{H}(b(z_{1})), n\in \mathbb{N}\}$ such that
$$U^{*}(f(z_{2}))=\frac{f(z_{2})-f(0)}{z_{2}}+\frac{\widetilde{B}(z_{2})}{z_{2}}f(0).$$
Now thinking of $\widetilde{B}(z_{2})$ as our inner function in two-dimensional space, we produce the following:

\begin{thm} \label{Th:NCT-image}
Let $\mu$ be an $x_{2}$ slice singular Borel probability measure on $[0,1)^{2}$,
$$Im(\mathcal{V}_{\mu})=\mathcal{H}(\widetilde{B}).$$
\end{thm}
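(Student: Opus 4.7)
The plan is to exploit the factorization $\mathcal{V}_\mu = \widetilde{\mathcal{V}}_{\mu_1} \circ \mathcal{V}_{\{\gamma^{x_1}\}}$ established in the preceding lemma. Since $\widetilde{\mathcal{V}}_{\mu_1}$ is a unitary from $L^2(\mu_1)(z_2)$ onto $\mathcal{H}(b(z_1))(z_2)$, the task reduces to characterizing $Im(\mathcal{V}_{\{\gamma^{x_1}\}})$ as a subspace of $L^2(\mu_1)(z_2)$, and then transporting the result through $\widetilde{\mathcal{V}}_{\mu_1}$ to land inside $\mathcal{H}(b(z_1))(z_2)$.

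First I would show that $Im(\mathcal{V}_{\{\gamma^{x_1}\}}) = [B(z_2) L^2(\mu_1)(z_2)]^\perp$. Using the direct integral decomposition $L^2(\mu_1)(z_2) = \int_{[0,1)}^{\oplus} H^2(\mathbb{D})\, d\mu_1$ together with the fact (established in the proof of Theorem \ref{Th:main1}) that $B(z_2)$ acts fiberwise as multiplication by the inner function $b^{x_1}(z_2)$ associated with $\gamma^{x_1}$, the subspace $B(z_2) L^2(\mu_1)(z_2)$ decomposes as $\int_{[0,1)}^{\oplus} b^{x_1}(z_2) H^2(\mathbb{D})\, d\mu_1$, whose orthogonal complement is the fiberwise model space $\int_{[0,1)}^{\oplus} \mathcal{H}(b^{x_1}(z_2))\, d\mu_1$. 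For $\subseteq$: for each $f \in L^2(\mu)$ the fiber $[\mathcal{V}_{\{\gamma^{x_1}\}}(f)](x_1, \cdot)$ coincides with $V_{\gamma^{x_1}}(f(x_1, \cdot))$ by the defining series, which lies in $\mathcal{H}(b^{x_1}(z_2))$ by the one-variable NCT. For $\supseteq$: given $G$ in the fiberwise model space, set $f(x_1, x_2) := [V_{\gamma^{x_1}}^* G(x_1, \cdot)](x_2)$; Rokhlin disintegration and the fiberwise unitarity of $V_{\gamma^{x_1}}$ yield $\|f\|_\mu^2 = \int_{[0,1)} \|G(x_1, \cdot)\|^2\, d\mu_1 = \|G\|^2$, and $\mathcal{V}_{\{\gamma^{x_1}\}}(f) = G$ by construction.

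Next I would transport via $\widetilde{\mathcal{V}}_{\mu_1}$. The definition $\widetilde{B}(z_2) = \sum_{n_2 = 0}^{\infty} V_{\mu_1} B_{n_2} V_{\mu_1}^{*} z_2^{n_2}$ gives the intertwining $\widetilde{B}(z_2)\, \widetilde{\mathcal{V}}_{\mu_1} = \widetilde{\mathcal{V}}_{\mu_1}\, B(z_2)$, because $\widetilde{\mathcal{V}}_{\mu_1}$ acts on $L^2(\mu_1)(z_2)$ coefficient-wise by $V_{\mu_1}$. Since $\widetilde{\mathcal{V}}_{\mu_1}$ is a unitary onto $\mathcal{H}(b(z_1))(z_2)$, it carries $B(z_2) L^2(\mu_1)(z_2)$ onto $\widetilde{B}(z_2) \mathcal{H}(b(z_1))(z_2)$, and hence carries the orthogonal complement onto $[\widetilde{B}(z_2) \mathcal{H}(b(z_1))(z_2)]^{\perp}$, which is exactly $\mathcal{H}(\widetilde{B})$ by the de Branges theorem cited earlier. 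Combining this with the first step gives $Im(\mathcal{V}_\mu) = \widetilde{\mathcal{V}}_{\mu_1}(Im(\mathcal{V}_{\{\gamma^{x_1}\}})) = \mathcal{H}(\widetilde{B})$.

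The main obstacle is the surjectivity half of step one: realizing the pointwise inverse $x_1 \mapsto V_{\gamma^{x_1}}^{*} G(x_1, \cdot)$ as a jointly measurable element of $L^2(\mu)$. This requires that the auxiliary sequence $\{g_{n_2}^{x_1}\}$, the inner function $b^{x_1}$, and hence the operator $V_{\gamma^{x_1}}^{*}$ all depend measurably on $x_1$. I would derive this from the recursion (\ref{Eq:aux}) applied to the measurable family of slice measures $\{\gamma^{x_1}\}$ furnished by the Rokhlin disintegration, after which the required $f$ defines an honest element of $L^2(\mu)$ and the desired equality of images is immediate.
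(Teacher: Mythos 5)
Your proposal is correct, but it takes a genuinely different route from the paper's own proof. You work from the orthocomplement definition $\mathcal{H}(\widetilde{B})=[\widetilde{B}(z_2)\mathcal{H}(b(z_1))(z_2)]^{\perp}$, first identifying $Im(\mathcal{V}_{\{\gamma^{x_1}\}})$ with $[B(z_2)L^2(\mu_1)(z_2)]^{\perp}=\int^{\oplus}\mathcal{H}(b^{x_1})\,d\mu_1$ via the fiberwise action of $B(z_2)$ and the one-variable Clark unitarity on each slice, and then transporting through the unitary $\widetilde{\mathcal{V}}_{\mu_1}$ using the intertwining $\widetilde{B}(z_2)\widetilde{\mathcal{V}}_{\mu_1}=\widetilde{\mathcal{V}}_{\mu_1}B(z_2)$. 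The paper instead invokes the other de Branges characterization, $\mathcal{H}(\widetilde{B})=\overline{\mathrm{span}}\{U^{-n}(c):c\in\mathcal{H}(b(z_1))\}$, proves the shift identities $\frac{\widetilde{B}(z_2)}{z_2}V_{\mu_1}(g)=\mathcal{V}_{\mu}(e^{-2\pi i x_2}g)$ and the backward-shift formula for $\mathcal{V}_{\mu}(f)$, and deduces the two inclusions by showing the spanning vectors $U^{-n}(c)$ lie in the image and that any $f$ with $\mathcal{V}_{\mu}(f)\perp U^{-n}(c)$ for all $n,c$ must vanish (a completeness argument through the Rokhlin disintegration). In effect you prove first what the paper records only afterwards as a corollary ($Im(\mathcal{V}_{\{\gamma^{x_1}\}})=\mathcal{H}(B)$ and $Im(\mathcal{V}_{\mu})=\widetilde{\mathcal{V}}_{\mu_1}(\mathcal{H}(B))$), and the theorem falls out. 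Your route is arguably more transparent and avoids the induction on $U^{-n}$, at the price of two technical points you should make explicit: that $B(z_2)L^2(\mu_1)(z_2)$ is closed (immediate since $B(z_2)$ is an isometry, so orthocomplements behave as claimed) and, as you correctly flag, the joint measurability of $x_1\mapsto g_{n_2}^{x_1}$, $b^{x_1}$, and $V_{\gamma^{x_1}}^{*}$ needed to realize fiberwise preimages as elements of $L^2(\mu)$; the paper's orthogonality argument sidesteps constructing preimages, though the same measurability is already implicit elsewhere in the paper (e.g.\ in Proposition \ref{auxformula} and in the definition of $\mathcal{V}_{\{\gamma^{x_1}\}}$), so this is a gap of exposition rather than of substance.
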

\begin{proof}
Notice that for any $g\in L^{2}(\mu_{1})$,
\begin{equation}
\begin{split}{\label{firstinv}}
    \frac{\widetilde{B}(z_{2})}{z_{2}}[V_{\mu_{1}}(g)] & =\widetilde{\mathcal{V}}_{\mu_{1}} \frac{B(z_{2})}{z_{2}}\widetilde{\mathcal{V}}_{\mu_{1}}^{*}[V_{\mu_{1}}(g)] \\
    & =\widetilde{\mathcal{V}}_{\mu_{1}} M_{\mathcal{V}_{\{\gamma^{x_{1}}\}}(e^{-2\pi i x_{2}})}\widetilde{\mathcal{V}}_{\mu_{1}}^{*}[V_{\mu_{1}}(g)] \\
    &= \widetilde{\mathcal{V}}_{\mu_{1}} M_{\mathcal{V}_{\{\gamma^{x_{1}}\}}(e^{-2\pi i x_{2}})}[g]\\
    &= \widetilde{\mathcal{V}}_{\mu_{1}}\mathcal{V}_{\{\gamma^{x_{1}}\}}(e^{-2\pi i x_{2}}g)\\
    & =\mathcal{V}_{\mu}(e^{-2\pi i x_{2}}g)
\end{split}
\end{equation}
where $M_{\mathcal{V}_{\{\gamma^{x_{1}}\}}(e^{-2\pi i x_{2}})}$ denotes multiplication by $\mathcal{V}_{\{\gamma^{x_{1}}\}}(e^{-2\pi i x_{2}})$ on $L^{2}(\mu_{1})(z_{2})$ and the second equality follows from equation \ref{identityifs}.

Furthermore,
for any $f\in L^{2}(\mu)$,
\begin{equation}{\label{secondinv}}
\begin{split}
\frac{[\mathcal{V}_{\mu}(f)](z_{2})-[\mathcal{V}_{\mu}(f)](0)}{z_{2}} & =
\widetilde{\mathcal{V}}_{\mu_{1}}(\frac{[\mathcal{V}_{\{\gamma^{x_{1}}\}}(f)](z_{2})-[\mathcal{V}_{\{\gamma^{x_{1}}\}}(f)](0)}{z_{2}})\\
& =\widetilde{\mathcal{V}}_{\mu_{1}} \mathcal{V}_{\{\gamma^{x_{1}}\}}(e^{-2\pi i x_{2}}(f-\langle f, 1\rangle_{\gamma^{x_{1}}}))\\
& =\mathcal{V}_{\mu}(e^{-2\pi i x_{2}}(f-\langle f, 1\rangle_{\gamma^{x_{1}}}))
\end{split}
\end{equation}
where the second equality follows from equation (\ref{backward}).
Therefore, we have
$$Im(\mathcal{V}_{\mu}) \supseteq \mathcal{H}(\widetilde{B})$$
since $Im(\mathcal{V}_{\mu}) \supseteq \mathcal{H}(b)$.

Now suppose that for $f\in L^{2}(\mu)$,
$$\langle \mathcal{V}_{\mu}(f), U^{-n}(c)\rangle=0$$ for all $c\in H(b(z_{1}))$ and $n\in \mathbb{N}$.

Letting $n=0$, we have,
$$\langle f, g\rangle=\langle \mathcal{V}_{\mu}(f), \mathcal{V}_{\mu}(g)\rangle=0$$ for all $g\in L^{2}(\mu_{1})$.

Notice that by equation (\ref{firstinv}),
$$U^{-1}(\mathcal{V}_{\mu}(g))=\mathcal{V}_{\mu}(e^{-2\pi i x_{2}}g).$$
Therefore,
$$\langle f, e^{-2\pi ix_{2}}g\rangle=0$$ for all $g\in L^{2}(\mu_{1}).$

Furthermore, for any $k$ and $g\in L^{2}(\mu)$ by equation (\ref{firstinv}) and (\ref{secondinv}),
\begin{equation}
\begin{split}
U^{-1}(\mathcal{V}_{\mu}(e^{-2\pi i kx_{2}}g))=
\mathcal{V}_{\mu}(e^{-2\pi ix_{2}}(e^{-2\pi ikx_{2}}g-\langle e^{-2\pi ikx_{2}}g,1\rangle_{\gamma^{x_{1}}}))+[\frac{\widetilde{B}(z_{1})}{z_{1}}\mathcal{V}_{\mu}(e^{-2\pi i kx_{2}}g)](0).
\end{split}
\end{equation}
Then it is easy to see by induction on $n\geq 2$,
$$U^{-n}(\mathcal{V}_{\mu}(g))=\mathcal{V}_{\mu}(e^{-2\pi i nx_{2}}g)+\sum_{k=1}^{n-1}\mathcal{V}_{\mu}(e^{-2\pi i kx_{2}}f_{k})$$
where $f_{k}\in L^{2}(\mu_{1}).$

Therefore we have,
$$\langle f, e^{-2\pi i nx_{2}}g\rangle=0$$
for all $n\in \mathbb{N}$ and $g\in L^{2}(\mu_{1}).$

Then we have
$$\langle \overline{f},ge^{2\pi i nx_{2}}\rangle=0$$ for all $n\in \mathbb{N}$ and $g\in L^{2}(\mu_{1})$.
By the Rokhlin disintegration,
we have
$$\langle \langle \overline{f},e^{2\pi i nx_{2}}\rangle_{\gamma^{x_{1}}},g\rangle_{\mu_{1}}=0\implies
\langle \overline{f},e^{2\pi i nx_{2}}\rangle_{\gamma^{x_{1}}}(x_{1})=0 .$$
Therefore, for $\mu_{1}$ a.e. $x_{1}$, $$f(x_{1},x_{2})=0$$ by the completeness of $\{e^{2\pi i x_{2}}\}$ in $L^{2}(\gamma^{x_{1}})$. Then $f=0$, getting us that
$$Im(\mathcal{V}_{\mu})\subseteq \mathcal{H}(\widetilde{B}).$$

\end{proof}
Now by a similar proof we have the following:
\begin{cor}
$$Im(\mathcal{V}_{\{\gamma^{x_{1}}\}})=\mathcal{H}(B)=\int_{[0,1)^{d-1}}^{\oplus}\mathcal{H}(b^{x_{1},\dots x_{d-1}})d\mu_{1}$$ so that
$$Im(\mathcal{V}_{\mu})=\widetilde{\mathcal{V}}_{\mu_{1}}(\mathcal{H}(B))=\widetilde{\mathcal{V}_{\mu_{1}}}(\int_{[0,1)^{d-1}}^{\oplus}\mathcal{H}(b^{x_{1},\dots x_{d-1}})d\mu_{1}).$$
\end{cor}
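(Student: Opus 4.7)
The plan is to exploit the direct integral decomposition of $L^2(\mu_1)(z_2)$ developed in Subsection 2.4. All three objects in the statement are simultaneously direct integrals over $\mu_1$, and the key observation is that $\mathcal{V}_{\{\gamma^{x_1}\}}$ and $B(z_2)$ both act \emph{fiberwise} over $x_1$, so the identification $Im(\mathcal{V}_{\{\gamma^{x_1}\}}) = \mathcal{H}(B)$ reduces to the one-dimensional Herr--Weber characterization $Im(V_{\gamma^{x_1}}) = \mathcal{H}(b^{x_1,\dots,x_{d-1}})$ applied $\mu_1$-a.e.

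First, I would verify the fiberwise factorization of $\mathcal{V}_{\{\gamma^{x_1}\}}$. By its definition, for each $f \in L^2(\mu)$,
\[ [\mathcal{V}_{\{\gamma^{x_1}\}}(f)](x_1,z_2) = \sum_{n_2 = 0}^\infty \langle f(x_1,\cdot), g_{n_2}^{x_1,\dots,x_{d-1}}\rangle_{\gamma^{x_1,\dots,x_{d-1}}} z_2^{n_2} = [V_{\gamma^{x_1,\dots,x_{d-1}}}(f(x_1,\cdot))](z_2) \]
for $\mu_1$ a.e.\ $x_1$, where the second equality is equation (\ref{NCT}). Since $V_{\gamma^{x_1,\dots,x_{d-1}}}$ is a unitary from $L^2(\gamma^{x_1,\dots,x_{d-1}})$ onto $\mathcal{H}(b^{x_1,\dots,x_{d-1}})$ (by the Herr--Weber Theorem), passing to the direct integral $L^2(\mu_1)(z_2) = \int^\oplus_{[0,1)^{d-1}} H^2(\mathbb{D}) d\mu_1$ yields
\[ Im(\mathcal{V}_{\{\gamma^{x_1}\}}) = \int^\oplus_{[0,1)^{d-1}} \mathcal{H}(b^{x_1,\dots,x_{d-1}}) d\mu_1. \]

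Second, I would identify $\mathcal{H}(B)$ with the same direct integral. The functional calculus argument in the proof of Theorem \ref{Th:main1} shows that $B(z_2)$ acts on $L^2(\mu_1)(z_2)$ fiberwise by power-series multiplication by $b^{x_1,\dots,x_{d-1}}(z_2)$. Consequently $B(z_2) \, L^2(\mu_1)(z_2) = \int^\oplus b^{x_1,\dots,x_{d-1}}(z_2) H^2(\mathbb{D}) d\mu_1$, and because orthogonal complement commutes with the direct integral decomposition,
\[ \mathcal{H}(B) = [B(z_2) L^2(\mu_1)(z_2)]^\perp = \int^\oplus_{[0,1)^{d-1}} \mathcal{H}(b^{x_1,\dots,x_{d-1}}) d\mu_1, \]
which combined with the previous display proves the first claimed equality. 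The second claimed equality then follows immediately from the lemma identifying $\mathcal{V}_\mu = \widetilde{\mathcal{V}}_{\mu_1} \circ \mathcal{V}_{\{\gamma^{x_1}\}}$: taking images of both sides gives $Im(\mathcal{V}_\mu) = \widetilde{\mathcal{V}}_{\mu_1}(\mathcal{H}(B))$.

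The main technical obstacle is ensuring that the direct integral manipulations are rigorous: we need the measurability of $x_1 \mapsto b^{x_1,\dots,x_{d-1}}(z_2)$ (and the corresponding families of sections $F^{x_1,\dots,x_{d-1}}$) with respect to $\mu_1$, and we need the fact that orthogonal complements can be computed pointwise on fibers. The first is already established in the proof of Theorem \ref{Th:main1}, where the de Branges Lemma was invoked to produce the analytic operator $B(z_2)$ acting fiberwise. As a backup, one can instead mimic the proof of Theorem \ref{Th:NCT-image} line by line with $\mathcal{H}(b)$ replaced by $L^2(\mu_1)$ and $\widetilde{B}(z_2)$ replaced by $B(z_2)$: the analogs of equations (\ref{firstinv}) and (\ref{secondinv}) follow from applying equations (\ref{ifandnct}) and (\ref{backward}) fiberwise, and then completeness of $\{e^{2\pi i n x_d}\}$ in each $L^2(\gamma^{x_1,\dots,x_{d-1}})$ forces the reverse inclusion exactly as in that proof.
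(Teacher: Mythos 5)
Your proposal is correct, but it takes a different route from the one the paper intends. The paper proves this corollary ``by a similar proof'' to Theorem \ref{Th:NCT-image}, i.e.\ by the de Branges span argument: one shows $\mathcal{H}(B)=\overline{span}\{U^{-n}c: c\in L^{2}(\mu_{1})\}$ is contained in $Im(\mathcal{V}_{\{\gamma^{x_{1}}\}})$ using the analogues of equations (\ref{firstinv}) and (\ref{secondinv}) with $\widetilde{B}$ replaced by $B$, and then gets the reverse inclusion by testing against all $U^{-n}c$ and invoking completeness of $\{e^{2\pi i n x_{d}}\}$ in each $L^{2}(\gamma^{x_{1},\dots,x_{d-1}})$ --- exactly the ``backup'' you sketch at the end. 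Your primary argument instead works fiberwise in the direct integral $L^{2}(\mu_{1})(z_{2})=\int^{\oplus}H^{2}(\mathbb{D})\,d\mu_{1}$, identifying $\mathcal{V}_{\{\gamma^{x_{1}}\}}$ with the decomposable operator $\int^{\oplus}V_{\gamma^{x_{1},\dots,x_{d-1}}}\,d\mu_{1}$ and $B(z_{2})$ with fiberwise multiplication by $b^{x_{1},\dots,x_{d-1}}(z_{2})$, then reducing to the one-dimensional fact $Im(V_{\gamma^{x_{1},\dots,x_{d-1}}})=\mathcal{H}(b^{x_{1},\dots,x_{d-1}})$. This is arguably better adapted to the statement as written: the middle term $\int^{\oplus}\mathcal{H}(b^{x_{1},\dots,x_{d-1}})\,d\mu_{1}$ falls out immediately, whereas the de Branges route naturally produces only $\mathcal{H}(B)$ and still needs the fiberwise multiplication identification to reach the direct-integral description. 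The price of your route is the bookkeeping you flag yourself: one must know that the field $x_{1}\mapsto b^{x_{1},\dots,x_{d-1}}$ (and hence the fields of model spaces and of inverse transforms) is measurable, that the range of the decomposable isometry $B(z_{2})$ is the direct integral of the fiber ranges (using that each fiber map is an isometry, so preimages of measurable square-integrable sections remain square-integrable), and that orthogonal complements of measurable fields of closed subspaces are computed fiberwise; all of these are standard and the measurability is already implicit in the construction of $B(w)$ in the proof of Theorem \ref{Th:main1}, so your argument is complete modulo routine direct-integral facts.
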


Now naturally one asks if these two-dimensional Normalized Cauchy Transforms are the same for a measure $\mu$ that is slice singular in both directions. We now show that only product measures can have the same Normalized Cauchy Transforms. In fact, they are the only measures whose Normalized Cauchy Transforms have the same image.

\begin{thm}
Let $\mu$ a Borel probability measure on $[0,1)^{2}$ that is slice singular in any variable order with $x_{2}$ slice singular auxiliary sequences
$\{g_{n_{2}}^{x_{1}}\}$ and $\{g_{n_{1}}\}$ and $x_{1}$ slice singular auxiliary sequences $\{h_{n_{1}}^{x_{2}}\}$ and $\{h_{n_{2}}\}$. The following are equivalent:
\begin{enumerate}
    \item $\mathcal{V}_{\mu}^{1}=\mathcal{V}_{\mu}^{2}$
    \item $Im(\mathcal{V}_{\mu}^{1})=Im(\mathcal{V}_{\mu}^{2})$
    \item The sequences $\{g_{n_{2}}^{x_{1}}g_{n_{1}}\}$ and $\{h^{x_{2}}_{n_{1}}h_{n_{2}}\}$ are unitarily equivalent.
    \item $\mu$ is a product measure.
\end{enumerate}
\end{thm}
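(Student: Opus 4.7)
The plan is to prove all four equivalences by establishing the chain $(4) \Rightarrow (1) \Rightarrow (2) \Leftrightarrow (3) \Rightarrow (4)$, with the last implication carrying essentially all of the technical weight.

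For $(4) \Rightarrow (1)$: if $\mu = \nu_1 \otimes \nu_2$, then both Rokhlin disintegrations are trivial, so $\gamma^{x_1} \equiv \nu_2$ and $\tilde{\gamma}^{x_2} \equiv \nu_1$. The auxiliary sequences lose their slice labels and reduce to the one-dimensional auxiliary sequences of $\nu_1$ and $\nu_2$ respectively, which forces $g_{n_2}^{x_1}(x_2)\, g_{n_1}(x_1) = g_{n_1}^{(\nu_1)}(x_1)\, g_{n_2}^{(\nu_2)}(x_2) = h_{n_1}^{x_2}(x_1)\, h_{n_2}(x_2)$. The two Parseval frames then coincide literally and hence $\mathcal{V}_\mu^1 = \mathcal{V}_\mu^2$. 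The implication $(1) \Rightarrow (2)$ is immediate. For $(2) \Leftrightarrow (3)$ I will use the standard fact that two Parseval frames are unitarily equivalent if and only if the ranges of their analysis operators coincide: identifying $H^2(\mathbb{D}^2)$ with $\ell^2(\mathbb{N}^2)$ via $z_1^{n_1}z_2^{n_2}$, the maps $\mathcal{V}_\mu^1, \mathcal{V}_\mu^2$ are exactly these analysis operators, and the unitary on $L^2(\mu)$ intertwining the frames can be taken to be $U = (\mathcal{V}_\mu^1)^* \mathcal{V}_\mu^2$.

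The core of the proof is $(3) \Rightarrow (4)$. Given a unitary $U$ on $L^2(\mu)$ with $U(g_{n_2}^{x_1} g_{n_1}) = h_{n_1}^{x_2} h_{n_2}$, specializing to $n_1 = 0$ (and using $g_0 = h_0 = 1$) gives $U(g_{n_2}^{x_1}) = h_{n_2}$, and unitarity combined with the Rokhlin disintegrations on each side produces
\[ \int_{[0,1)} \langle g_{n_2}^{x_1}, g_{m_2}^{x_1}\rangle_{\gamma^{x_1}}\, d\mu_1(x_1) = \langle h_{n_2}, h_{m_2}\rangle_{\tilde{\mu}_2} \qquad \text{for all } n_2, m_2 \geq 0. \]
Under the identification $\delta_n \leftrightarrow z^n$ of $\ell^2(\mathbb{N})$ with $H^2(\mathbb{D})$, the Gram matrix of the one-dimensional auxiliary sequence of a singular measure $\nu$ is the orthogonal projection $P_{\mathcal{H}(b_\nu)}$, by the Herr--Weber characterization of the 1D NCT recalled in Section 4.1. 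So the identity above reads $\int P_{\mathcal{H}(b_{\gamma^{x_1}})}\, d\mu_1 = P_{\mathcal{H}(b_{\tilde{\mu}_2})}$. A Jensen-type rigidity principle---equality in $\|\int P_x v\, d\mu_1\|^2 \leq \int \|P_x v\|^2\, d\mu_1 = \langle Pv, v\rangle$ forces $P_x v$ to be $\mu_1$-a.e.\ constant, so a convex combination of projections can equal a projection only when the summands agree---then yields $\mathcal{H}(b_{\gamma^{x_1}}) = \mathcal{H}(b_{\tilde{\mu}_2})$ for $\mu_1$-a.e.\ $x_1$.

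The main obstacle comes next, because equality of model subspaces is coarser than equality of inner functions: by Beurling, $\mathcal{H}(b_{\gamma^{x_1}}) = \mathcal{H}(b_{\tilde{\mu}_2})$ only gives a measurable $\lambda : [0,1) \to \mathbb{T}$ with $b_{\gamma^{x_1}} = \lambda(x_1)\, b_{\tilde{\mu}_2}$. To pin $\lambda$ down I will exploit the Rokhlin marginal identity $\tilde{\mu}_2 = \int \gamma^{x_1}\, d\mu_1$ through the Herglotz representations: writing $u = b_{\tilde{\mu}_2}(w)$, averaging Herglotz for $\gamma^{x_1}$ against $\mu_1$ and comparing with Herglotz for $\tilde{\mu}_2$ yields
\[ \frac{1+u}{1-u} = \int_{[0,1)} \frac{1 + \lambda(x_1)\, u}{1 - \lambda(x_1)\, u}\, d\mu_1(x_1), \]
which holds for $u$ in the range of the non-constant inner function $b_{\tilde{\mu}_2}$ and hence for all $u \in \mathbb{D}$ by analytic continuation. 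Comparing Taylor coefficients at $u=0$ gives $\int \lambda(x_1)^n\, d\mu_1 = 1$ for every $n \geq 1$; since $|\lambda|=1$, equality in $|\int \lambda^n\, d\mu_1| \leq 1$ forces $\lambda^n$ to be $\mu_1$-a.e.\ constant, and taking $n=1$ yields $\lambda \equiv 1$. Thus $b_{\gamma^{x_1}} = b_{\tilde{\mu}_2}$ a.e., the Herglotz bijection then gives $\gamma^{x_1} = \tilde{\mu}_2$ a.e., and $\mu = \mu_1 \otimes \tilde{\mu}_2$ is a product.
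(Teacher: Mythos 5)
Your proposal is correct, and while $(4)\Rightarrow(1)$, $(1)\Rightarrow(2)$, and $(2)\Leftrightarrow(3)$ track the paper closely (the paper builds the unitary by hand from the image equality where you invoke the standard analysis-operator fact), your argument for the crucial implication $(3)\Rightarrow(4)$ is genuinely different. The paper specializes the unitary equivalence at $n_2=0$ to get $U(g_{n_1})=h_{n_1}^{x_2}$ and then runs an induction on the moments $\widehat{\mu_1}(n_1)$ versus $\widehat{\rho^{x_2}}(n_1)$: applying $U$ to the recursion \eqref{Eq:aux}, comparing with the same recursion in $L^2(\rho^{x_2})$, and exploiting $\Vert U(e_{n_1})\Vert_\mu=1$ in an explicit norm computation to force the moment difference to vanish, after which F.~and~M.~Riesz gives $\mu_1=\rho^{x_2}$ a.e. You instead specialize at $n_1=0$ to get $U(g_{n_2}^{x_1})=h_{n_2}$, pass to Gram matrices (which, for Parseval frames, are the projections onto the ranges of the analysis operators, i.e.\ onto the model spaces $\mathcal{H}(b_{\gamma^{x_1}})$ and $\mathcal{H}(b_{\mu_2})$ by the Herr--Weber theorem), and use the rigidity of an average of orthogonal projections equalling a projection to conclude $\mathcal{H}(b_{\gamma^{x_1}})=\mathcal{H}(b_{\mu_2})$ a.e.; you then correctly confront the extra subtlety this route creates --- Beurling only gives $b_{\gamma^{x_1}}=\lambda(x_1)b_{\mu_2}$ with $\vert\lambda\vert=1$ --- and dispose of it by averaging the Herglotz representations over the marginal identity $\mu_2=\int\gamma^{x_1}\,d\mu_1$, continuing analytically off the open set $b_{\mu_2}(\mathbb{D})$, and using the extreme-point inequality $\vert\int\lambda^n\,d\mu_1\vert\le 1$ to force $\lambda\equiv 1$. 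The two proofs are dual in which family of slices they identify with a marginal ($\gamma^{x_1}=\mu_2$ for you, $\rho^{x_2}=\mu_1$ for the paper), and either conclusion yields the product structure. The paper's computation is more elementary and self-contained; yours is more structural, ties the theorem to the model-space machinery the paper develops in Section~4, and would generalize more transparently, at the cost of the Beurling/unimodular-factor step and the need to justify measurability of $\lambda$ and non-constancy of $b_{\mu_2}$ (both of which follow routinely from the Rokhlin measurability of $x_1\mapsto\widehat{\gamma^{x_1}}(n)$ and the singularity of the marginal, respectively).
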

\begin{proof}
$(1)\implies (2) $ is obvious.

For $(4)\implies (1)$, if $\mu$ is a product measure, it is clear $h_{n_{2}}=g_{n_{2}}^{x_{1}}$ and $g_{n_{1}}=h_{n_{1}}^{x_{2}}$ in $L^{2}(\mu)$ for each $n_{1}$ and $n_{2}$. The equality of $\mathcal{V}_{\mu}^{1}$ and $\mathcal{V}_{\mu}^{2}$ follow.

For $(2)\implies (3)$, we have for all $f\in L^{2}(\mu)$, there is a corresponding $U^{*}(f)\in L^{2}(\mu)$ s.t.
    $$\sum_{n_{2}=0}^{\infty}[\sum_{n_{1}=0}^{\infty}\langle f,g_{n_{2}}^{x_{1}}g_{n_{1}}\rangle_{\mu}z_{1}^{n_{1}}]z_{2}^{n_{2}}=\sum_{n_{1}=0}^{\infty}[\sum_{n_{2}=0}^{\infty}\langle U^{*}(f),h_{n_{1}}^{x_{2}}h_{n_{2}}\rangle z_{2}^{n_{2}}]z_{1}^{n_{1}}.$$
    Then it is clear that $U^{*}$ is a bijection.
    Now because, $\{g_{n_{2}}^{x_{1}}g_{n_{1}}\}$ and $\{h_{n_{1}}^{x_{2}}h_{n_{2}}\}$ are Parseval frames in $L^{2}(\mu)$, we have $$||U^{*}(f)||=||f||$$ for all $f\in L^{2}(\mu)$.
    Also, by linearity of the inner product, we have $U^{*}$ is linear.

For $(3)\implies (4)$, let $U$ be the unitary in condition (3). Note that by taking $n_2=0$, we have $U(g_k)=h_k^{x_2}$ for all $k$.
    We prove that $\int_{0}^{1}e^{2\pi i n_{1}x_{1}}d\mu_{1}=\int_{0}^{1}e^{2\pi i n_{1}x_{1}}d\rho^{x_{2}}$ in $L^{2}(\mu)$ for all $n_{1}$ by induction where $\mu_{1}$ is the marginal measure on $x_{1}$, $\mu_{2}$ is the marginal measure on $x_{2}$, and $\rho^{x_{2}}$ are the slice measures of $\mu$ that are indexed by $x_{2}$.

The $n_{1}=0$, case follows from $\mu_{1}$ and $\rho^{x_{2}}$ being probability measures.

Assume the result holds for all $k\leq n_{1}-1$. Taking Equation (\ref{Eq:aux}) in $L^2(\mu_1)$, and then recognizing it as holding in $L^2(\mu)$, by applying $U$ we have
\begin{align*}
U(e_{n_1})&=\sum_{k=0}^{n_1}\ip{e_{n_1},e_k}_{\mu_1}U(g_k)\\
&=\sum_{k=0}^{n_1}\ip{e_{n_1},e_k}_{\mu_1}h_k^{x_2}.
\end{align*}
On the other hand, by applying Equation (\ref{Eq:aux}) directly in $L^2(\rho^{x_2})$, we have
\begin{align*}e_{n_1}&=\sum_{k=0}^{n_1}\ip{e_{n_1},e_k}_{\rho^{x_2}}h_k^{x_2}
\end{align*}
for $\mu_2$-almost-every $x_2$. Thus this equation also holds in $L^2(\mu)$. It follows from the induction assumption that 
\begin{align*}U(e_{n_1})=e_{n_1}+\ip{e_{n_1},1}_{\mu_1}-\ip{e_{n_1},1}_{\rho^{x_2}}.
\end{align*}

Taking the $L^2(\mu)$ norm of both sides,

\begin{align*}
1&= \norm{e^{2\pi i n_{1}x_{1}}+\langle e^{2\pi i n_{1}x_{1}},1\rangle_{\mu_{1}}-\langle e^{2\pi i n_{1}x_{1}},1\rangle_{\rho^{x_{2}}}}_{\mu}^{2} \\
& = 1+ \norm{\langle e^{2\pi i n_{1}x_{1}},1\rangle_{\mu_{1}}-\langle e^{2\pi i n_{1}x_{1}},1\rangle_{\rho^{x_{2}}}}_{\mu}^{2}+2Re(\langle e^{2\pi i n_{1}x_{1}}, \langle e^{2\pi i n_{1}x_{1}},1\rangle_{\mu_{1}}-\langle e^{2\pi i n_{1}x_{1}},1\rangle_{\rho^{x_{2}}}\rangle_{\mu})\\
&=1+\norm{\langle e^{2\pi i n_{1}x_{1}},1\rangle_{\mu_{1}}-\langle e^{2\pi i n_{1}x_{1}},1\rangle_{\rho^{x_{2}}}}_{\mu}^{2} +2\p{|\langle e^{2\pi i n_{1}x_{1}},1\rangle_{\mu_{1}}|^{2}-\int_{0}^{1}\abs{\int_{0}^{1}e^{2\pi i n_{1}x_{1}}d\rho^{x_{2}}}^{2}d\mu_{2}} \\
&=1+|\langle e^{2\pi i n_{1}x_{1}},1\rangle_{\mu_{1}}|^{2}+\int_{0}^{1}\abs{\int_{0}^{1}e^{2\pi i n_{1}x_{1}}d\rho^{x_{2}}}^{2}d\mu_{2}-2Re(\langle e^{2\pi i n_{1}x_{1}},1\rangle_{\mu_{1}},\langle e^{2\pi i n_{1}x_{1}},1\rangle_{\rho^{x_{2}}})\\
& \hspace{0.65cm}+2\p{|\langle e^{2\pi i n_{1}x_{1}},1\rangle_{\mu_{1}}|^{2}-\int_{0}^{1}\abs{\int_{0}^{1}e^{2\pi i n_{1}x_{1}}d\rho^{x_{2}}}^{2}d\mu_{2}} \\
& = 1+|\langle e^{2\pi i n_{1}x_{1}},1\rangle_{\mu_{1}}|^{2}-\int_{0}^{1}\abs{\int_{0}^{1}e^{2\pi i n_{1}x_{1}}d\rho^{x_{2}}}^{2}d\mu_{2} \\
&=1+Re(\langle e^{2\pi i n_{1}x_{1}}, \langle e^{2\pi i n_{1}x_{1}},1\rangle_{\mu_{1}}-\langle e^{2\pi i n_{1}x_{1}},1\rangle_{\rho^{x_{2}}}\rangle_{\mu}).
\end{align*}

This implies that $Re(\langle e^{2\pi i n_{1}x_{1}}, \langle e^{2\pi i n_{1}x_{1}},1\rangle_{\mu_{1}}-\langle e^{2\pi i n_{1}x_{1}},1\rangle_{\rho^{x_{2}}}\rangle_{\mu})=0$, which substituting back yields 
\begin{align*}||\langle e^{2\pi i n_{1}x_{1}},1\rangle_{\mu_{1}}-\langle e^{2\pi i n_{1}x_{1}},1\rangle_{\rho^{x_{2}}}||_{\mu}^{2}=0.
\end{align*}
Therefore, we have for $\mu_{2}$-almost-every $x_{2}$,
$$\int_{0}^{1}e^{2\pi i n_{1}x_{1}}d[\mu_{1}-\rho^{x_{2}}]=0$$ for all $n_{1}$. Then by the F. and M. Riesz theorem, $\mu_1-\rho^{x_2}$ is absolutely continuous, and since it is also singular, $\mu_1-\rho^{x_2}$ is the zero measure. Thus we have $\mu_{1}=\rho^{x_{2}}$ for $\mu_{2}$-almost-every $x_{2}$, so that by the uniqueness part of the Rokhlin Disintegration, $\mu$ is a product measure.

\end{proof}

\subsection{Symmetric measures}
Now we discuss a special case of the Normalized Cauchy Transform for symmetric measures. One might initially believe that a symmetric measure has its Normalized Cauchy Transforms with the same images, but we have already shown this is not the case. We will show however that the images are reflections of each other.

Suppose that $\mu$ is a symmetric slice singular Borel probability measure on $[0,1)^{2}$, meaning the map $T$ on $L^{2}(\mu)$ where
$$T(f(x_{1},x_{2}))=f(x_{2},x_{1})$$ is an isometry.

Let $\{g_{n_{1}}(x_{1})\}$ be the auxiliary sequence associated with the marginal measure $\mu_{1}$ of $\mu$ on variable $x_{1}$ and let $\{h_{n_{1}}(x_{2})\}$ be the auxiliary sequence associated with the marginal measure $\mu_{2}$ of $\mu$ on variable $x_{2}$. Using the recursive definitions, one can see that
$$T(h_{n_{1}}(x_{2}))=g_{n_{1}}(x_{1})$$ for all $n_{1}$ since
$$\langle e^{2\pi i n_{1}x_{1}},1\rangle_{\mu_{1}}=\langle e^{2\pi i n_{1}x_{2}},1\rangle_{\mu_{2}}.$$

Then for all $n_{1}$, $g_{n_{1}}(x_{1})=h_{n_{1}}(x_{1})$ and similarly, for $\{g_{n_{2}}^{x_{1}}(x_{2})\}$, the auxiliary sequence associated with the slice measures $\{\gamma^{x_{1}}\}$ of $\mu$ that are in variable $x_{2}$ and auxiliary sequence $\{h_{n_{2}}^{x_{2}}(x_{1})\}$ associated with the slice measures $\{\rho^{x_{2}}\}$ of variable $x_{1}$ one can show
$$T(h_{n_{2}}^{x_{2}}(x_{1}))=g_{n_{2}}^{x_{1}}(x_{2})$$ for all $n_{2}$ since
$$T(\langle e^{2\pi i n_{2}x_{1}},1\rangle_{\rho^{x_{2}}})=\langle e^{2\pi i n_{2}x_{2}},1\rangle_{\gamma^{x_{1}}}$$ for all $n_{2}$.

Then we have for any $f\in L^{2}(\mu)$ and any $n_{1}$ and $n_{2}$
$$\langle f(x_{1},x_{2}),g_{n_{2}}^{x_{1}}(x_{2})g_{n_{1}}(x_{1})\rangle =\langle f(x_{1},x_{2}),h_{n_{2}}^{x_{1}}(x_{2})h_{n_{1}}(x_{1})\rangle=
\langle f(x_{2},x_{1}),h_{n_{2}}^{x_{2}}(x_{1})h_{n_{1}}(x_{2})\rangle.$$

Now let $T'$ be the unitary map on $H^{2}(\mathbb{D}^{2})$ where
$$T'(\sum_{n_{1}}\sum_{n_{2}}c_{n_{1}n_{2}}z_{1}^{n_{1}}z_{2}^{n_{2}})=
\sum_{n_{1}}\sum_{n_{2}}c_{n_{2}n_{1}}z_{1}^{n_{1}}z_{2}^{n_{2}}.$$

We have the following proposition describing the Normalized Cauchy Transforms of a symmetric measure.
\begin{prop}
Let $\mu$ be a symmetric Borel probability measure on $[0,1)^{2}$ that is slice singular in any variable order with Normalized Cauchy Transforms $\mathcal{V}_{\mu}^{1}$ and $\mathcal{V}_{\mu}^{2}$.
We have
$$\mathcal{V}_{\mu}^{1}=T' \circ \mathcal{V}_{\mu}^{2} \circ T$$ 
so that
$$Im(\mathcal{V}_{\mu}^{1})=T'(Im(\mathcal{V}_{\mu}^{2})).$$
\end{prop}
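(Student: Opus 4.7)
The plan is to compute $T'\circ\mathcal{V}_\mu^2\circ T$ directly on an arbitrary $f\in L^2(\mu)$, using the three identifications already established just above the proposition: (i) the map $T$ is a unitary involution on $L^2(\mu)$ since $\mu$ is symmetric; (ii) $T(h_{n_1}(x_2))=g_{n_1}(x_1)$ and $T(h_{n_2}^{x_2}(x_1))=g_{n_2}^{x_1}(x_2)$; and (iii) because $T$ is the pointwise coordinate swap, it distributes across the pointwise product $h_{n_1}^{x_2}(x_1)\cdot h_{n_2}(x_2)$. Everything is then a matter of bookkeeping, which I expect to be the only real step.

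First I would expand
\[\mathcal{V}_\mu^2(Tf)=\sum_{n_1=0}^\infty\sum_{n_2=0}^\infty\langle Tf,\,h_{n_1}^{x_2}(x_1)h_{n_2}(x_2)\rangle_\mu\, z_2^{n_2}z_1^{n_1}.\]
Using $T=T^{-1}=T^*$ and item (iii), the inner product becomes
\[\langle f,\,T(h_{n_1}^{x_2}(x_1))\,T(h_{n_2}(x_2))\rangle_\mu = \langle f,\,g_{n_1}^{x_1}(x_2)\,g_{n_2}(x_1)\rangle_\mu\]
by item (ii) (applied to both factors). So
\[\mathcal{V}_\mu^2(Tf)=\sum_{n_1,n_2}\langle f,g_{n_1}^{x_1}(x_2)g_{n_2}(x_1)\rangle_\mu\, z_1^{n_1}z_2^{n_2}.\]
Now applying $T'$, which by definition swaps the two indices in the coefficient $c_{n_1,n_2}$ of $z_1^{n_1}z_2^{n_2}$, interchanges the roles of $n_1$ and $n_2$ in the coefficient, yielding
\[T'(\mathcal{V}_\mu^2(Tf))=\sum_{n_1,n_2}\langle f,g_{n_2}^{x_1}(x_2)g_{n_1}(x_1)\rangle_\mu\, z_1^{n_1}z_2^{n_2}=\mathcal{V}_\mu^1(f),\]
which is the first claim.

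The image statement is then immediate: since $T$ is a bijection of $L^2(\mu)$, $\mathcal{V}_\mu^2\circ T$ and $\mathcal{V}_\mu^2$ have the same image, so applying the unitary $T'$ gives $\mathrm{Im}(\mathcal{V}_\mu^1)=T'(\mathrm{Im}(\mathcal{V}_\mu^2))$. The only care needed in the whole argument is keeping the two index conventions straight (the $(n_1,n_2)$-order in $\mathcal{V}_\mu^1$ versus $\mathcal{V}_\mu^2$, and the action of $T'$ on those indices), which is why the $T'$ insertion is necessary and why $\mathcal{V}_\mu^1\neq\mathcal{V}_\mu^2$ in general even for symmetric $\mu$, consistent with the preceding theorem characterizing when the two transforms coincide.
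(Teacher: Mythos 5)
Your computation is correct and follows essentially the same route the paper takes: the paper proves this proposition via the identity $\langle f(x_{1},x_{2}),g_{n_{2}}^{x_{1}}g_{n_{1}}\rangle_{\mu}=\langle f(x_{2},x_{1}),h_{n_{2}}^{x_{2}}h_{n_{1}}\rangle_{\mu}$ derived in the discussion immediately preceding the statement, which is exactly the coefficient identity your expansion of $T'\circ\mathcal{V}_{\mu}^{2}\circ T$ produces. Your bookkeeping of the index swap under $T'$ and the use of $T=T^{*}=T^{-1}$ are both right, so the proposal stands as written.
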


\subsection{Boundary behavior}
We now attempt to justify why our definition of Normalized Cauchy Transform plays nicer with product measures than symmetric measures by showing
that the two dimensional Normalized Cauchy Transform of a function $f$ converges to $f$ in norm as it approaches the boundary in some sense. Namely,

\begin{prop}
Let $\mathcal{V}_{\mu}$ be defined where $\mu$ is $x_{2}$ slice singular. Then for any $f\in L^{2}(\mu)$,
$$\lim_{r_{2}\to 1}\lim_{r_{1}\to 1}[\mathcal{V}_{\mu}(f)](r_{1}e^{2\pi ix_{1}},r_{2}e^{2\pi i x_{2}})=f(x_{1},x_{2}).$$
where each limit is taken in $L^{2}(\mu)$ and the first limit is taken with fixed $r_{2}$ and $x_{2}$, followed by the second limit.
\end{prop}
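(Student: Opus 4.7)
The plan is to exploit the factorization $\mathcal{V}_{\mu}=\widetilde{\mathcal{V}}_{\mu_{1}}\circ\mathcal{V}_{\{\gamma^{x_{1}}\}}$ established earlier in this section, which reduces the bivariate boundary limit to two successive applications of the one-variable NCT boundary behavior. Writing $a_{n_{2}}(x_{1}):=\langle f(x_{1},\cdot),g_{n_{2}}^{x_{1}}\rangle_{\gamma^{x_{1}}}$, the isometry $\mathcal{V}_{\{\gamma^{x_{1}}\}}$ gives $\sum_{n_{2}}\|a_{n_{2}}\|_{\mu_{1}}^{2}=\|f\|_{\mu}^{2}$, and
\[
[\mathcal{V}_{\mu}(f)](r_{1}e^{2\pi ix_{1}},r_{2}e^{2\pi ix_{2}})=\sum_{n_{2}=0}^{\infty}[V_{\mu_{1}}(a_{n_{2}})](r_{1}e^{2\pi ix_{1}})\,r_{2}^{n_{2}}e^{2\pi in_{2}x_{2}}.
\]

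For the inner limit ($r_{1}\to 1^{-}$ with $r_{2}<1$ fixed), set $c_{n_{2}}(x_{1}):=V_{\mu_{1}}(a_{n_{2}})(r_{1}e^{2\pi ix_{1}})-a_{n_{2}}(x_{1})$ and let $F_{r_{1}}$ denote the difference between the two sides of the displayed formula and its candidate limit. A Cauchy--Schwarz step in $n_{2}$ yields $|F_{r_{1}}(x_{1},x_{2})|^{2}\leq \tfrac{1}{1-r_{2}^{2}}\sum_{n_{2}}|c_{n_{2}}(x_{1})|^{2}$, a bound that is independent of $x_{2}$. Integrating against the probability measure $\gamma^{x_{1}}$ and then against $\mu_{1}$ produces
\[
\|F_{r_{1}}\|_{\mu}^{2}\leq \frac{1}{1-r_{2}^{2}}\sum_{n_{2}}\|c_{n_{2}}\|_{\mu_{1}}^{2}.
\]
The one-dimensional NCT boundary theorem for the singular measure $\mu_{1}$ makes each $\|c_{n_{2}}\|_{\mu_{1}}\to 0$ as $r_{1}\to 1$, while a uniform bound $\|V_{\mu_{1}}(a_{n_{2}})_{r_{1}}\|_{\mu_{1}}\leq C\|a_{n_{2}}\|_{\mu_{1}}$ dominates the summand by a constant multiple of $\|a_{n_{2}}\|_{\mu_{1}}^{2}$, which is summable. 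Dominated convergence in $n_{2}$ then delivers $\|F_{r_{1}}\|_{\mu}\to 0$, so the inner limit equals $G_{r_{2}}(x_{1},x_{2}):=\sum_{n_{2}}a_{n_{2}}(x_{1})r_{2}^{n_{2}}e^{2\pi in_{2}x_{2}}$ in $L^{2}(\mu)$.

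For the outer limit ($r_{2}\to 1^{-}$), the key observation is that
\[
G_{r_{2}}(x_{1},x_{2})=[V_{\gamma^{x_{1}}}(f(x_{1},\cdot))](r_{2}e^{2\pi ix_{2}})
\]
is itself the one-variable NCT applied to the slice $f(x_{1},\cdot)\in L^{2}(\gamma^{x_{1}})$. The slice-singular hypothesis guarantees that $\gamma^{x_{1}}$ is singular for $\mu_{1}$-a.e.\ $x_{1}$, so the one-dimensional boundary theorem applied fiber-wise gives $\|G_{r_{2}}(x_{1},\cdot)-f(x_{1},\cdot)\|_{\gamma^{x_{1}}}\to 0$, with a uniform dominating bound by a constant multiple of $\|f(x_{1},\cdot)\|_{\gamma^{x_{1}}}$. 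The latter is $\mu_{1}$-square-integrable by the Rokhlin disintegration of $\|f\|_{\mu}^{2}$, so one last application of dominated convergence yields $\|G_{r_{2}}-f\|_{\mu}^{2}=\int\|G_{r_{2}}(x_{1},\cdot)-f(x_{1},\cdot)\|_{\gamma^{x_{1}}}^{2}\,d\mu_{1}\to 0$, completing the proof.

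The main obstacle is establishing the uniform $L^{2}$ boundedness $\sup_{r<1}\|V_{\nu}(h)_{r}\|_{L^{2}(\nu)}\leq C\|h\|_{L^{2}(\nu)}$ of the Abel means of the one-variable NCT for a singular probability measure $\nu$—needed with $\nu=\mu_{1}$ in the inner step and with $\nu=\gamma^{x_{1}}$ in the outer step. This follows from Poltoratski's theorem on $L^{2}(\nu)$-convergence of the radial NCT boundary values (handling $r$ near $1$) combined with the $H^{2}$ reproducing-kernel estimate (handling $r$ bounded away from $1$). Everything else in the argument is a routine interchange of limits with integrals and $\ell^{2}$-summable series.
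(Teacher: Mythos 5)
Your factorization through $\mathcal{V}_{\mu}=\widetilde{\mathcal{V}}_{\mu_{1}}\circ\mathcal{V}_{\{\gamma^{x_{1}}\}}$ and your treatment of the inner limit match the paper's proof, but your argument is more explicit where the paper is terse: the paper justifies interchanging $\lim_{r_{1}\to1}$ with the sum over $n_{2}$ by appealing to ``absolute summability,'' whereas you supply the actual Cauchy--Schwarz bound $\abs{F_{r_{1}}}^{2}\leq (1-r_{2}^{2})^{-1}\sum_{n_{2}}\abs{c_{n_{2}}(x_{1})}^{2}$ and a dominated-convergence step; that is a genuine improvement in rigor. Where you diverge is the outer limit. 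The paper works globally: it recalls from the proof of Theorem A that $\sum_{n_{2}}\ip{f,g_{n_{2}}^{x_{1}}}_{\gamma^{x_{1}}}e^{2\pi in_{2}x_{2}}$ converges to $f$ \emph{in $L^{2}(\mu)$} (effectiveness of the operator Kaczmarz algorithm) and then applies Abel summability once in the Hilbert space $L^{2}(\mu)$ --- two lines, no fiberwise analysis. You instead identify $G_{r_{2}}(x_{1},\cdot)$ as the one-variable NCT of the slice $f(x_{1},\cdot)$ in $L^{2}(\gamma^{x_{1}})$ and integrate the fiberwise convergence over $x_{1}$ by dominated convergence. Both routes reach the conclusion, but the paper's is cleaner and reuses machinery already established.

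One point in your outer step needs repair. Your dominating function is $C\norm{f(x_{1},\cdot)}_{\gamma^{x_{1}}}$, so the constant $C$ in $\sup_{r<1}\norm{V_{\nu}(h)_{r}}_{L^{2}(\nu)}\leq C\norm{h}_{L^{2}(\nu)}$ must be independent of the measure $\nu=\gamma^{x_{1}}$, i.e.\ independent of $x_{1}$. The justification you sketch (Poltoratski's convergence theorem for $r$ near $1$ plus reproducing-kernel estimates for $r$ away from $1$, implicitly a uniform-boundedness argument) only yields a constant depending on $\nu$, which would leave you needing measurability and square-integrability of $x_{1}\mapsto C(x_{1})\norm{f(x_{1},\cdot)}_{\gamma^{x_{1}}}$. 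The clean fix is elementary and avoids Poltoratski entirely: the Kaczmarz partial sums satisfy $\norm{h-\sum_{n\leq N}\ip{h,g_{n}}e_{n}}\leq\norm{h}$ for every singular probability measure (successive projections are contractive), so $\sup_{N}\norm{S_{N}h}\leq 2\norm{h}$, and Abel summation by parts gives $\norm{V_{\nu}(h)_{r}}=\norm{(1-r)\sum_{N}r^{N}S_{N}h}\leq 2\norm{h}$ with the universal constant $2$. With that substitution your argument is complete.
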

There is an analogous statement when $\mu$ is $x_{1}$ slice singular, but it is important that the limit with the variable associated with the marginal measure is taken first.
\begin{proof}
Let $z_{1}=r_{1}e^{2\pi i x_{1}}$ and $z_{2}=r_{2}e^{2\pi i x_{2}}$.
Recall for all $f\in L^{2}(\mu)$,
$$[\mathcal{V}_{\mu}(f)](z_{2})=[\tilde{\mathcal{V}}_{\mu_{1}}\mathcal{V}_{\{\gamma^{x_{1}}\}}(f)](z_{2})=\sum_{n_{2}=0}^{\infty}V_{\mu_{1}}(\langle f, g_{n_{2}}^{x_{1}}\rangle_{\gamma^{x_{1}}})z_{2}^{n_{2}}.$$
For fixed $z_{2}$, since each component of the composition is an isometry, we have absolute summability so that
$$\lim_{|z_{1}|\to 1}\mathcal{V}_{\mu}(f)=\sum_{n_{2}=0}^{\infty}\lim_{|z_{1}|\to 1}V_{\mu_{1}}(\langle f, g_{n_{2}}^{x_{1}}\rangle_{\gamma^{x_{1}}})z_{2}^{n_{2}}.$$

Now $$[V_{\mu_{1}}(\langle f, g_{n_{2}}^{x_{1}}\rangle_{\gamma^{x_{1}}})](e^{2\pi i n_{1}x_{1}})=\sum_{n_{1}=0}^{\infty}\langle \langle f, g_{n_{2}}^{x_{1}}\rangle_{\gamma^{x_{1}}},g_{n_{1}}\rangle_{\mu_{1}}e^{2\pi i n_{1}x_{1}}= \langle f, g_{n_{2}}^{x_{1}}\rangle_{\gamma^{x_{1}}}$$ by the one dimensional case. Therefore, by Abel summability for Hilbert spaces,
$$\lim_{|z_{1}|\to 1}\mathcal{V}_{\mu}(f)=\sum_{n_{2}=0}^{\infty}\langle f, g_{n_{2}}^{x_{1}}\rangle_{\gamma^{x_{1}}}z_{2}^{n_{2}}.$$

Finally, recall from the proof of Theorem A,
$$f=\sum_{n_{2}=0}^{\infty}[G(n_{2},d)f]e^{2\pi i n_{2}x_{2}}=\sum_{n_{2}=0}^{\infty}\langle f, g_{n_{2}}^{x_{1}}\rangle_{\gamma^{x_{1}}}e^{2\pi i n_{2}x_{2}}.$$
Then we may apply Abel summability again to get the result.

\end{proof}

\subsection{$d$ dimensional Normalized Cauchy Transform}
In this section we discuss how one could define a Normalized Cauchy Transform in higher dimensions. In particular, we illustrate how this transform can be disintegrated into $d$ natural parts.

\begin{defn}
If $\mu$ is an $x_{d}$ slice singular Borel probability measure on $[0,1)^{d}$, we can naturally define a \textbf{Normalized Cauchy Transform} of $\mu$ as follows: $\mathcal{V}_{\mu}: L^{2}(\mu)\to H^{2}(\mathbb{D}^{d})$ where
$$\mathcal{V}_{\mu}(f)=\sum_{n_{d}=0}^{\infty}\sum_{n_{d-1}=0}^{\infty}\dots \sum_{n_{1}=0}^{\infty}c_{n_{1},\dots,n_{d}}z_{1}^{n_{1}}\dots z_{d-1}^{n_{d-1}}z_{d}^{n_{d}}$$ where the sequence $\{c_{n_{1},\dots,n_{d}}\}$ is given in Theorem A.

By Theorem $A$, this map is an isometry.

Furthermore, we can similarly define $\mathcal{V}_{\{\gamma^{x_{1},\dots,x_{d-1}}\}}:L^{2}(\mu)\to L^{2}(\mu_{1})(z_{d})$ where
$$\mathcal{V}_{\{\gamma^{x_{1},\dots,x_{d-1}}\}}(f)=\sum_{n_{d}=0}^{\infty}\langle f,g_{n_{d}}^{x_{1},\dots,x_{d-1}}(x_{d})\rangle_{\gamma^{x_{1},\dots, x_{d-1}}}z_{d}^{n_{d}}$$ and $\widetilde{\mathcal{V}}_{\mu_{1}}: L^{2}(\mu_{1})(z_{d})\to Im(\mathcal{V}_{\mu_{1}})(z_{d})$ inductively where
$$\widetilde{\mathcal{V}}_{\mu_{1}}(\sum_{n_{d}=0}^{\infty}f_{n_{d}}(x_{1},\dots, x_{d-1})z_{d}^{n_{d}})=\sum_{n_{d}=0}^{\infty}\mathcal{V}_{\mu_{1}}(f_{n_{d}}(x_{1},\dots, x_{d-1}))z_{d}^{n_{d}}$$ and $\mathcal{V}_{\mu_{1}}: L^{2}(\mu_{1})\to H^{2}(\mathbb{D}^{d-1})$ is the Normalized Cauchy Transform defined inductively  for $\mu_{1}$ that is $x_{d-1}$ slice singular.
\end{defn}
Again we have that $\mathcal{V}_{\mu}=\tilde{\mathcal{V}_{\mu_{1}}}\circ \mathcal{V}_{\{\gamma^{x_{1},\dots,x_{d-1}}\}}$.

By an induction argument, we get the following:

\begin{prop}
If $\mu$ is an $x_{d}$ slice singular Borel probability measure on $[0,1)^{d}$, there exists $\widetilde{\mathcal{V}}_{\mu_{d-1}}: L^{2}(\mu_{d-1})(z_{2},\dots, z_{d})\to H^{2}(\mathbb{D}^{d}) $ where 
$$\widetilde{\mathcal{V}}_{\mu_{d-1}}(\sum_{n_{d}}\dots \sum_{n_{2}}f_{n_{2},\dots ,n_{d}}(x_{1})z_{2}^{n_{2}}\dots z_{d}^{n_{d}})=\sum_{n_{d}}\dots \sum_{n_{2}}\mathcal{V}_{\mu_{d-1}}(f_{n_{2},\dots ,n_{d}}(x_{1}))z_{2}^{n_{2}}\dots z_{d}^{n_{d}}$$ and $\mathcal{V}_{\mu_{d-1}}: L^{2}(\mu_{d-1})\to H^{2}(\mathbb{D})$ is the Normalized Cauchy Transform of $\mu_{d-1}$ and $\mathcal{V}_{\{\gamma^{x_{1},\dots x_{k}}\}}$ where $1\leq k\leq d-2$ that are defined on $L^{2}(\mu_{k})(z_{k+2},\dots, z_{d} )$ so that
$$\mathcal{V}_{\mu}=\widetilde{\mathcal{V}}_{\mu_{d-1}}\circ \mathcal{V}_{\{\gamma^{x_{1}}\}}\circ \dots \circ \mathcal{V}_{\{\gamma^{x_{1},\dots x_{d-2}}\}} \circ \mathcal{V}_{\{\gamma^{x_{1},\dots x_{d-1}}\}}.$$
\end{prop}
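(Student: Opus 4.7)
The plan is induction on $d$. The base case $d=2$ is exactly the lemma in Section 4.2 stating $\mathcal{V}_\mu = \widetilde{\mathcal{V}}_{\mu_1} \circ \mathcal{V}_{\{\gamma^{x_1}\}}$, which matches the formula.

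For the inductive step, assume the decomposition has been established in dimension $d-1$. Given an $x_d$ slice singular measure $\mu$ on $[0,1)^d$, the paper has already observed (immediately preceding the proposition) that
\[
\mathcal{V}_\mu = \widetilde{\mathcal{V}}_{\mu_1} \circ \mathcal{V}_{\{\gamma^{x_1,\dots,x_{d-1}}\}},
\]
where $\widetilde{\mathcal{V}}_{\mu_1}$ is the coefficient-wise extension of $\mathcal{V}_{\mu_1}$ in the variable $z_d$. Because $\mu_1$ is $x_{d-1}$ slice singular on $[0,1)^{d-1}$, the induction hypothesis applied to $\mu_1$ gives
\[
\mathcal{V}_{\mu_1} = \widetilde{\mathcal{V}}_{\mu_{d-1}} \circ \mathcal{V}_{\{\gamma^{x_1}\}} \circ \cdots \circ \mathcal{V}_{\{\gamma^{x_1,\dots,x_{d-2}}\}}.
\]

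The engine of the argument is a functoriality observation: for any bounded operator $T:H\to K$, write $T^{(w)}$ for its coefficient-wise extension $H(w)\to K(w)$. Then $(T\circ S)^{(w)} = T^{(w)}\circ S^{(w)}$, as one checks directly on a monomial $\sum_n h_n w^n$. Applying this with $w=z_d$ to the displayed inductive decomposition of $\mathcal{V}_{\mu_1}$, and noting that the extension of $\widetilde{\mathcal{V}}_{\mu_{d-1}}$ (which already acts on $L^2(\mu_{d-1})(z_2,\dots,z_{d-1})$) by an additional $z_d$ variable produces precisely the map $\widetilde{\mathcal{V}}_{\mu_{d-1}}: L^2(\mu_{d-1})(z_2,\dots,z_d)\to H^2(\mathbb{D}^d)$ named in the statement, gives
\[
\widetilde{\mathcal{V}}_{\mu_1} = \widetilde{\mathcal{V}}_{\mu_{d-1}} \circ \mathcal{V}_{\{\gamma^{x_1}\}} \circ \cdots \circ \mathcal{V}_{\{\gamma^{x_1,\dots,x_{d-2}}\}}
\]
on the appropriate enlarged power-series spaces. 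Substituting this into the first display yields the claimed factorization.

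The main obstacle is purely bookkeeping. At each level of the induction one must track which variable $z_j$ is produced by which slice transform, and verify that iterated coefficient-wise extensions land on the spaces $L^2(\mu_k)(z_{k+2},\dots,z_d)$ declared in the statement (the indexing of the slice transforms $\mathcal{V}_{\{\gamma^{x_1,\dots,x_k}\}}$ uses the $\mu_{d-1-k}$ convention from Definition \ref{D:slicesingular}, which must be matched against the $k$-indexing in the proposition). Once the identity $(T\circ S)^{(w)} = T^{(w)}\circ S^{(w)}$ is invoked as the tool, these identifications are routine, and the induction terminates at the one-variable $\mathcal{V}_{\mu_{d-1}}$ built from the $d=1$ theory of \cite{Herr2017Fourier}.
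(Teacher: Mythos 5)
Your proposal is correct and follows the same route the paper intends: the paper offers no written proof beyond ``by an induction argument,'' and your induction --- base case from the two-variable factorization $\mathcal{V}_{\mu}=\widetilde{\mathcal{V}}_{\mu_{1}}\circ \mathcal{V}_{\{\gamma^{x_{1}}\}}$, inductive step via the one-level factorization stated just before the proposition together with the identity $(T\circ S)^{(w)}=T^{(w)}\circ S^{(w)}$ for coefficient-wise extensions of bounded operators --- is exactly the intended argument. Your flag about matching the $\mu_{d-1-k}$ indexing of the slice measures against the proposition's $k$-indexing is the right bookkeeping concern, and handling it as you describe completes the proof.
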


Notice again that if $\mu$ is slice singular in any variable order, there are $d!$ ways to define $\mathcal{V}_{\mu}$ in terms of the auxiliary sequences.

To illustrate this, we give a commutative diagram for the case when $d=3$.

\vspace{1cm}

\adjustbox{scale=1.5,center}{
\begin{tikzcd}
L^{2}(\mu) \arrow[rd, "\mathcal{V}_{\mu}"] \arrow[r, "{\mathcal{V}_{\{\gamma^{x_{1},x_{2}}\}}}"] & L^{2}(\mu_{1})(z_{3}) \arrow[r, "{\mathcal{V}_{\{\gamma^{x_{1}}\}}}"] \arrow[d, "\widetilde{V}_{\mu_{1}}"] & {L^{2}(\mu_{2})(z_{2},z_{3})} \arrow[d, "\widetilde{V}_{\mu_{2}}"] \\
                                                                                               & H^{2}(\mathbb{D}^{2})(z_{3})                                                                         & H^{2}(\mathbb{D}^{3})
                                                                                               \arrow[l, "i"]                                                   
\end{tikzcd}
}
Where $i$ is the natural inclusion map.

We can also illustrate more from looking at the specific images of some of these maps.

\vspace{1cm}

\adjustbox{scale=1.5,center}{
\begin{tikzcd}
L^{2}(\mu) \arrow[rd, "\mathcal{V}_{\mu}"] \arrow[r, "{\mathcal{V}_{\{\gamma^{x_{1},x_{2}}\}}}"] & L^{2}(\mu_{1})(z_{3}) \arrow[r, "{\mathcal{V}_{\{\gamma^{x_{1}}\}}}"] \arrow[d, "\widetilde{V}_{\mu_{1}}"] & {L^{2}(\mu_{2})(z_{2},z_{3})} \arrow[d, "\widetilde{V}_{\mu_{2}}"] \\
                                                                                               & \mathcal{H}(\widetilde{B})(z_{3})                                                                         & {\mathcal{H}(b_{2})(z_{2},z_{3}) \arrow[l, "P"]                         }                          
\end{tikzcd}
}

Where $b_{2}$ is in inner function corresponding to $\mu_{2}$
and 
$P: [H(b_{2})(z_{2})](z_{3})\to \mathcal{H}(\widetilde{B})(z_{3})$ is the projection map, thinking of $\mathcal{H}(\widetilde{B})$ as a closed subspace of $H(b_{2})(z_{2})$, specifically,
$$[\widetilde{B}(z_{2})[\mathcal{H}(b_{2})(z_{2})]]^{\perp}=\mathcal{H}(\widetilde{B})=Im(\mathcal{V}_{\mu_{1}})$$ where $\widetilde{B}(z_{2})$ is defined in the proof of Theorem A for $\mu_{1}$.

Now let
$$\widetilde{\widetilde{B}}(z_{3})=\widetilde{V}_{\mu_{1}}\circ B(z_{3})\circ \widetilde{V}_{\mu_{1}}^{-1}$$
where $B(z_{3})$ is defined in the proof for Theorem A.
\begin{prop}

We have
$$Im(\mathcal{V}_{\mu})=H(\widetilde{\widetilde{B}})={[\widetilde{\widetilde{B}}(z_{3})H(\widetilde{B})(z_{3})]^{\perp}}={[\widetilde{\widetilde{B}}(z_{3})[[\widetilde{B}(z_{2})[\mathcal{H}(b_{2})(z_{2})]]^{\perp}](z_{3})]^{\perp}}.$$
\end{prop}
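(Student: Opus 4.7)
The plan is to mirror the proof of Theorem \ref{Th:NCT-image}, one level up: in place of the one-variable Normalized Cauchy Transform $V_{\mu_{1}}$ we will use the two-variable transform $\mathcal{V}_{\mu_{1}}$ (available because $\mu_{1}$ is $x_{2}$ slice singular), and in place of the scalar inner function $b(z_{1})$ we will use the analytic operator $\widetilde{B}(z_{2})$ that plays its role for $\mu_{1}$. The commutative diagrams in the excerpt already factor $\mathcal{V}_{\mu}=\widetilde{\mathcal{V}}_{\mu_{1}}\circ \mathcal{V}_{\{\gamma^{x_{1},x_{2}}\}}$, and the Corollary to Theorem \ref{Th:NCT-image} identifies $Im(\mathcal{V}_{\mu_{1}})=\mathcal{H}(\widetilde{B})$. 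Our goal is therefore to show $\widetilde{\mathcal{V}}_{\mu_{1}}(Im(\mathcal{V}_{\{\gamma^{x_{1},x_{2}}\}}))$ coincides with the de Branges model space determined by $\widetilde{\widetilde{B}}$ inside $\mathcal{H}(\widetilde{B})(z_{3})$.

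First I would verify that $\widetilde{\widetilde{B}}(z_{3})=\widetilde{\mathcal{V}}_{\mu_{1}}\circ B(z_{3})\circ \widetilde{\mathcal{V}}_{\mu_{1}}^{-1}$ is a well-defined analytic operator acting isometrically on $\mathcal{H}(\widetilde{B})(z_{3})$ by power series multiplication. Since $B(z_{3})$ acts isometrically on $L^{2}(\mu_{1})(z_{3})$ (proved in the proof of Theorem A) and $\widetilde{\mathcal{V}}_{\mu_{1}}$ is unitary onto $\mathcal{H}(\widetilde{B})$, this is automatic termwise. The de Branges theorem (applied on the Hilbert space $\mathcal{H}(\widetilde{B})$) then produces the model space $\mathcal{H}(\widetilde{\widetilde{B}})=[\widetilde{\widetilde{B}}(z_{3})\mathcal{H}(\widetilde{B})(z_{3})]^{\perp}$, together with a unitary $U$ whose action blends the backward shift in $z_{3}$ with the twist $\widetilde{\widetilde{B}}/z_{3}$.

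Next I would establish the two operator identities analogous to (\ref{firstinv}) and (\ref{secondinv}) in Theorem \ref{Th:NCT-image}. The first, $\frac{\widetilde{\widetilde{B}}(z_{3})}{z_{3}}[\mathcal{V}_{\mu_{1}}(g)]=\mathcal{V}_{\mu}(e^{-2\pi i x_{3}}g)$ for $g\in L^{2}(\mu_{1})$, is obtained by conjugating the corresponding identity $\frac{B(z_{3})}{z_{3}}=M_{\mathcal{V}_{\{\gamma^{x_{1},x_{2}}\}}(e^{-2\pi i x_{3}})}$ (the fiberwise de-Branges-plus-(\ref{ifandnct}) relation for the slice measures $\gamma^{x_{1},x_{2}}$) by $\widetilde{\mathcal{V}}_{\mu_{1}}$. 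The second identity,
\begin{equation*}
\frac{[\mathcal{V}_{\mu}(f)](z_{3})-[\mathcal{V}_{\mu}(f)](0)}{z_{3}}=\mathcal{V}_{\mu}\p{e^{-2\pi i x_{3}}\p{f-\langle f,1\rangle_{\gamma^{x_{1},x_{2}}}}},
\end{equation*}
follows from the one-variable backward-shift formula (\ref{backward}) applied fiberwise inside $\mathcal{V}_{\{\gamma^{x_{1},x_{2}}\}}$ and then pushed through $\widetilde{\mathcal{V}}_{\mu_{1}}$. Together these give $Im(\mathcal{V}_{\mu})\supseteq \mathcal{H}(\widetilde{\widetilde{B}})$, since $Im(\mathcal{V}_{\mu})$ already contains $\mathcal{H}(\widetilde{B})=Im(\mathcal{V}_{\mu_{1}})$ and, by the two identities above, is closed under the generators $U^{-n}$ appearing in the de Branges characterization.

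For the reverse inclusion, I would suppose $\mathcal{V}_{\mu}(f)\perp U^{-n}(c)$ for every $n\in \mathbb{N}$ and every $c\in \mathcal{H}(\widetilde{B})$, and then induct on $n$ exactly as at the end of the proof of Theorem \ref{Th:NCT-image} to conclude $\langle f,e^{-2\pi i n x_{3}}g\rangle_{\mu}=0$ for every $n\geq 0$ and every $g\in L^{2}(\mu_{1})$. Applying the Rokhlin disintegration in the $x_{3}$-direction collapses this to $\langle \overline{f},e^{2\pi i n x_{3}}\rangle_{\gamma^{x_{1},x_{2}}}=0$ for $\mu_{1}$-almost every $(x_{1},x_{2})$, and completeness of $\{e^{2\pi i n x_{3}}\}_{n\geq 0}$ in $L^{2}(\gamma^{x_{1},x_{2}})$ (by the slice-singular hypothesis) forces $f=0$. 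The final equality in the statement is then just the substitution of $\mathcal{H}(\widetilde{B})=[\widetilde{B}(z_{2})\mathcal{H}(b_{2})(z_{2})]^{\perp}$ from Theorem \ref{Th:NCT-image} into the expression for $\mathcal{H}(\widetilde{\widetilde{B}})$. The main obstacle will be keeping the nested direct-integral structure straight, most importantly recognizing that $\widetilde{\widetilde{B}}(z_{3})$ must be viewed as acting on $\mathcal{H}(\widetilde{B})(z_{3})$ rather than on the larger ambient $L^{2}(\mu_{1})(z_{3})$, so that the de Branges machinery is invoked in the correct Hilbert space.
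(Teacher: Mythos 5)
Your proposal is correct and follows exactly the route the paper intends: the paper states this Proposition without an explicit proof, remarking only that the image of the higher-dimensional transform "follows by inductive arguments" extending Theorem \ref{Th:NCT-image}, and your argument is precisely that inductive step, conjugating $B(z_{3})$ by the unitary $\widetilde{\mathcal{V}}_{\mu_{1}}$ onto $\mathcal{H}(\widetilde{B})$, re-deriving the analogues of the identities (\ref{firstinv}) and (\ref{secondinv}), and closing with the same de Branges spanning and Rokhlin completeness argument. Your closing caution about invoking the de Branges machinery on $\mathcal{H}(\widetilde{B})(z_{3})$ rather than on $L^{2}(\mu_{1})(z_{3})$ is exactly the point the paper's final commutative diagram is meant to encode.
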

Note for $d>3$ the image of the Normalized Cauchy Transform and the  properties of the transform discussed in the last section follow by inductive arguments. The trapezoidal commutative diagram above extends as well by decomposing $\widetilde{V}_{\mu_{2}}$ similarly.

\section*{Acknowledgments}
This research was supported in part by the National Science Foundation and the National Geospatial Intelligence Agency under awards \#1830254 and \#2219959.

\printbibliography 
\end{document}